\numberwithin{equation}{section}
\newtheorem{example}{Example}[section]
\newtheorem{theorem}[example]{Theorem}
 \newtheorem{proposition}[example]{Proposition}
\newtheorem{lemma}[example]{Lemma}
 \newtheorem{corollary}[example]{Corollary}
\newtheorem{remark}[example]{Remark}
\newtheorem*{maintheorem*}{Main Theorem}
\numberwithin{equation}{section}
\renewcommand{\i}{\ifmmode\mathit{\mathchar"7010 }\else\char"10 \fi}
\renewcommand{\j}{\ifmmode\mathit{\mathchar"7011 }\else\char"11 \fi}
\newcommand{\R}{\mathbb{R}}
\newcommand{\N}{\mathbb{N}}
\newcommand{\Z}{\mathbb{Z}}
\DeclareMathOperator*{\sign}{sign}
\newcommand{\sgn}[1]{\sign\left(#1\right)}
\newcommand{\abs}[1]{\left|#1\right|}
\newcommand{\norm}[1]{\left\|#1\right\|}
\newcommand{\test}{\varphi}
\newcommand{\seq}[1]{\left\{#1\right\}}
\newcommand{\Dz}{{\Delta z}}
\newcommand{\Dx}{{\Delta x}}
\newcommand{\Dt}{{\Delta t}}
\newcommand{\px}{\partial_x}
\newcommand{\pt}{\partial_t}
\newcommand{\pz}{\partial_z}
\newcommand{\zjp}{z_{j+1/2}}
\newcommand{\zjm}{z_{j-1/2}}
\newcommand{\eps}{\varepsilon}
\newcommand{\ove}[1]{\overline{#1}\negthinspace}
\newcommand{\oW}{\ove{W}}
\newcommand{\oWn}{\oW^{\;n}}
\newcommand{\lenloc}{L^1_{\mathrm{loc}}}
\newcommand{\tx}{\tilde{x}}
\newcommand{\bx}{\bar{x}}
\newcommand{\tu}{\tilde{u}}
\newcommand{\bu}{\bar{u}}
\begin{document}

\title[Nonlocal Lagrangian traffic flow] {A nonlocal Lagrangian
  traffic flow model and the zero-filter limit}

\author[Coclite]{G. M. Coclite} \author[Karlsen]{K. H. Karlsen}
\author[Risebro]{N. H. Risebro}

\address[Giuseppe Maria Coclite]{\newline Dipartimento di Meccanica,
  Matematica e Management, \newline Politecnico di Bari, \newline Via
  E. Orabona 4 -- 70125 Bari, Italy}
\email[]{giuseppemaria.coclite@poliba.it}

\address[Kenneth Hvistendahl Karlsen, Nils Henrik Risebro]{\newline
  Department of Mathematics, \newline University of Oslo, \newline
  PO Box 1053, Blindern -- 0316 Oslo, Norway}
\email[]{kennethk@math.uio.no, nilshr@math.uio.no}

\subjclass[2020]{Primary: 35L65; Secondary: 65M12, 90B20}

\keywords{Nonlocal conservation law, traffic flow, 
Follow-the-Leaders model, Lagrangian cooridinates, 
numerical method, convergence, zero-filter limit}

\thanks{GMC is member of Gruppo Nazionale per l'Analisi Matematica, 
la Probabilit\`a e le loro Applicazioni (GNAMPA), 
which is part of the Istituto Nazionale di Alta Matematica (INdAM). 
GMC has received partial support from the Italian Ministry of 
Education, University and Research (MIUR) through 
the Research Project of National Relevance 
``Evolution problems involving interacting scales" 
(Prin 2022, project code 2022M9BKBC) 
and the Programme Department of Excellence 
Legge 232/2016 (Grant No.~CUP - D94I18000260001). 
GMC expresses his gratitude to the Department of 
Mathematics at the University of Oslo for their warm hospitality. 
This work was partially supported by the project Pure 
Mathematics in Norway, funded by Trond Mohn Foundation 
and Tromsø Research Foundation.}

\date{\today}

\begin{abstract}
  In this study, we start from a Follow-the-Leaders model for traffic
  flow that is based on a weighted harmonic mean (in Lagrangian
  coordinates) of the downstream car density.  This results in a
  nonlocal Lagrangian partial differential equation (PDE) model for
  traffic flow. We demonstrate the well-posedness of 
  the Lagrangian model in the $L^1$ sense. Additionally, we rigorously
  show that our model coincides with the Lagrangian formulation of the
  local LWR model in the ``zero-filter'' (nonlocal-to-local) limit.  We
  present numerical simulations of the new model.  One significant
  advantage of the proposed model is that it allows for simple proofs
  of (i) estimates that do not depend on the ``filter size'' and (ii)
  the dissipation of an arbitrary convex entropy.
\end{abstract}

\maketitle

\tableofcontents

\section{Introduction}

The LWR model, developed by Lighthill, Whitham, and Richards \cite{Lighthill:1955aa} 
more than six decades ago, was the first macroscopic traffic model. The basic
form of the LWR model is a hyperbolic conservation law
\cite{Dafermos:2010fk}, which is a PDE that states that the total
number of vehicles on a given stretch of road must remain constant
over time.  This is expressed mathematically as a continuity equation,
which relates the flow of vehicles $u V$ into and out of a given
region to the change in the density $u$ of vehicles within that
region.  The LWR model also includes equations that describe how the
speed $V$ of vehicles changes over time $t$ and space $x$.  These
equations are based on the assumption that the speed $V$ of a vehicle
located at a point $x$ at time $t$ is determined by the density $u$ of
vehicles at $(t,x)$, $V=V(u(t,x))$, and that the speed of a vehicle
will tend to decrease as the density of surrounding vehicles
increases, $V'(\cdot)\leq 0$.  We refer to $u V(u)$ as the flux
function and the conservation law
\begin{equation}\label{eq:LWR-eqn}
  \partial_t u+\partial_x 
  \bigl(u V(u)\bigr)=0
\end{equation} 
as the original LWR model. There have been many generalisations of the
LWR model over the years. For a comprehensive discussion of traffic
flow data and the various models used to mathematically represent it,
we recommend consulting the book \cite{TreiberKesting:13}.

The original LWR model is based on local PDEs, which means that the
speed function $V$ is determined by the values of the car density at a
single point $x$ in space. There have been numerous efforts to
develop alternative speed functions.  In particular, many authors
examined nonlocal generalisations of the original LWR model, taking
into account the look-ahead distance of drivers in order to better
model their behavior. Some models assume that drivers react to the
mean downstream traffic density, while others assume that they react
to the mean downstream velocity. The corresponding nonlocal LWR models
take the form
\begin{equation}\label{eq:nonlocalCL-intro}
  \partial_t u+\partial_x 
  \left(u V\left(\overline{u}\right)\right)=0,
  \quad 
  \partial_t u+\partial_x 
  \left(u \overline{V(u)}\right)=0,
\end{equation}
where, for a given integrable function $v=v(x)$,
$\overline{v}(x):=\int_x^\infty \Phi_{\alpha}(y-x)v(y)\,dy$.  The
anisotropic kernel $\Phi_{\alpha}$ characterizes the nonlocal effect
through the ``filter size'' $\alpha>0$.  It is a nonnegative,
non-increasing, and $C^1$ function defined on the nonnegative real
numbers, and it has unit mass:
$\int_0^\infty \Phi_{\alpha}(x)\, dx =1$.  Setting
$J_\alpha(x):=\Phi_\alpha(-x)\chi_{(-\infty,0]}(x)$, the
function $\overline{v}(x)$ can be expressed as the convolution
$v\star J_\alpha(x)$, noting that
$\left\{J_\alpha(x)\right\}_{\alpha>0}$ is an approximate identify
(convolution kernel) that generally is discontinuous at $x=0$.  In the
formal limit $\alpha\to 0$ (the ``zero-filter'' limit), the nonlocal
fluxes $u V\left(\overline{u}\right)$ and $u \overline{V(u)}$
converge to the local flux $u V(u)$ of the original LWR model
\eqref{eq:LWR-eqn}.

The mathematical study of conservation laws with nonlocal flux has
gained significant attention in recent years.  A comprehensive list of
references on this topic is beyond the scope of this text.  Instead,
we refer the reader to the recent paper \cite{CocliteEtal:22} (on weak
solutions) and the references cited therein. Here we only mention a
few references
\cite{BlandinGoatin:16,ChiarelloGoatin:18,FriedrichEtal:18,GoatinScialanga:16}
related to nonlocal conservation laws \eqref{eq:nonlocalCL-intro} that
arise as generalisations of the original LWR model.  In particular, in
\cite{BlandinGoatin:16,ChiarelloGoatin:18,GoatinScialanga:16} the
authors establish the well-posedness (of entropy solutions) and
convergence of numerical schemes for the first equation in
\eqref{eq:nonlocalCL-intro}, as well as a more general version of it.
For modifications of these results to account for the second equation
in \eqref{eq:nonlocalCL-intro}, see \cite{FriedrichEtal:18}.

In general \cite{ColomboEtal:21}, solutions of nonlocal conservation
laws like $\partial_t u_\alpha +\partial_x \bigl(u_\alpha V(u_\alpha\star
J_\alpha)\bigr)=0$, where $J_\alpha$ is an arbitrary approximate
identity and $V$ is a Lipschitz function, do not converge to the
entropy solution of the corresponding local conservation law as the
``filter size" $\alpha$ approaches zero. The counterexamples in
\cite{ColomboEtal:21} do not exclude the possibility that convergence
may still hold in specific cases.  In particular, the case where
$V'(\cdot)\leq 0$, the initial function is nonnegative, and the
convolution kernel $J_\alpha$ is anisotropic, specifically supported
on the negative axis $(-\infty, 0]$. This case corresponds to nonlocal
traffic flow PDEs, like the first one in \eqref{eq:nonlocalCL-intro}.
Recently, under assumptions like these, positive results have been
obtained for the zero-filter limit
\cite{BressanShen:20,BressanShen:21,CocliteLimit:20,Colombo:22,Keimer:19}.

Traffic flow models can be divided into two categories: macroscopic
models, which describe the flow of vehicles on a roadway as a
continuous fluid, and microscopic models, which describe the motion
and interactions of individual vehicles.  LWR-type PDEs are examples
of macroscopic traffic flow models, while microscopic models are often
described using systems of differential equations, such as the
Follow-the-Leaders (FtL) model. In the FtL model, the velocity of each
vehicle is determined by the velocity of the vehicle in front of
it. There is a (rigorous) connection between FtL models and hyperbolic
conservation laws, which has been studied in detail in the literature,
see \cite{DiFrancesco:15,HoldenRisebro:18} and the references
therein. In
\cite{Chiarello:20,ChienShen:2019,RidderShen:19,ShenShikh-Khalil:18},
the authors provide links between nonlocal FtL models and the
macroscopic LWR-type equations \eqref{eq:nonlocalCL-intro}.

Before we present our own model, it is helpful to briefly describe the
nonlocal FtL models of \cite{ChienShen:2019,
  RidderShen:19,ShenShikh-Khalil:18}.  Let $x_i(t)$, $i\in \Z$, be the
position of the $i$th car, ordering them so that
$x_{i+1}(t)\ge x_i(t)+\ell$, where $\ell$ is the (common) length of
the cars. Set
\begin{equation}\label{eq:uidef}
u_i(t):= \frac{\ell}{x_{i+t}(t)-x_i(t)},
\end{equation}
which is the local discrete density (or ``car saturation") perceived
by the driver of car $i\in \Bbb{Z}$.  One of the nonlocal FtL models
of \cite{RidderShen:19} asks that the car positions $x_i(t)$ satisfy
the following system of differential equations:
\begin{equation}\label{eq:nonlocal-FtL-density}
  x_i'(t)=V\left(\overline{u_i}(t)\right), 
  \quad
  i \in \Z, \,\, t>0,
\end{equation}
where
\begin{equation}\label{eq:arithmetic-mean}
  \overline{u_i}(t)
  :=\sum_{j=0}^\infty \Phi_{ij\alpha}(t)
  u_{i+j}(t),\quad 
  \Phi_{ij\alpha}(t):=\int_{x_{i+j}(t)}^{x_{i+1+j}(t)}
  \Phi_{\alpha}(\zeta-x_i(t))\, d\zeta, 
  \quad i\in \Z.
\end{equation}
In other words, the velocity of each vehicle is not only determined by
the vehicle directly in front of it, but also by the other vehicles in
the surrounding (downstream) area.  Replacing
\eqref{eq:nonlocal-FtL-density} by
\begin{equation}\label{eq:nonlocal-FtL-velocity}
  x_i'(t)=\overline{V(u_i(t))}, 
  \quad
  i \in \Z, \,\, t>0,
\end{equation}
we obtain a slightly different FtL model.  While drivers under the
model \eqref{eq:nonlocal-FtL-density} react to the mean downstream
traffic saturation, drivers under the model
\eqref{eq:nonlocal-FtL-velocity} react to the mean downstream
velocity.

The nonlocal FtL model \eqref{eq:nonlocal-FtL-density},
\eqref{eq:arithmetic-mean} uses a weighted \textit{arithmetic mean} of
the (downstream) car-density values to calculate the speed. There are
several ways to aggregate a sequence of numbers.  While the arithmetic
mean is a simple average calculated by adding up the values in a set
and dividing by the number of values, the harmonic mean is calculated
by taking the reciprocal of the arithmetic mean of the reciprocals of
the values in a set. In view of the well-known harmonic
mean-arithmetic mean inequality \cite[p.~126]{Steele-MasterClass:04},
the harmonic mean is generally a more conservative estimate of the
average value in a set; roughly speaking, the harmonic mean takes into
account the ``size'' of the values in the set, while the arithmetic
mean does not.

In this paper we propose a nonlocal FtL model based on
a weighted harmonic mean in the Lagrangian coordinates. The governing
differential equations are of the form
\begin{equation}\label{eq:nonlocal-FtL-harmonic}
  x_i'(t)=
  V\left(\,\left[\, \overline{\frac{1}{u_i(t)}}
      \,\right]^{-1}\,\right), 
  \quad
    \overline{\frac{1}{u_i(t)}}
  :=\sum_{j=0}^\infty 
  \frac{\Phi_{ij\alpha}}{u_{i+j}(t)}, \qquad i \in \Z, \,\, t>0.
\end{equation}
Now the weights are determined by
\begin{equation}
  \label{eq:Lagrangeweights}
  \Phi_{ij \alpha}:=\int_{z_{i+j}}^{z_{i+1+j}}
  \Phi_{\alpha}(\zeta-z_i)\, d\zeta, 
  \quad j=0,1,2,\ldots,
\end{equation}
where $z_i:=i\ell$ is the Lagrangian coordinate of the $i$-th car.
Note carefully that the weights $\Phi_{ij \alpha}$ are computed by
averaging the kernel $\Phi(\cdot-z_i)$ (centered at car $i$) between
the Lagrangian particles $z_{i+j}$ (car $i+j$) and $z_{i+1+j}$ (car
$i+1+j$). The cars are here labelled in the driving
direction\footnote{In some Lagrangian traffic models (see, e.g.,
  \cite{Leclercq2007TheLC}), the so-called cumulative count function
  $N(x,t)$ is used, which represents the number of cars that have
  passed a specific location ($x$) at a specific time ($t$), starting
  with a reference car that is labelled as $1$.  As cars pass the
  observer, they are labeled in consecutive order (2, 3, 4, etc.),
  thereby labelling the cars in the opposite direction of their
  driving direction.  By ordering the cars in the driving direction
  (as we do here), the first car would be the one closest to the point
  of observation and the car numbering would increase as the cars move
  further away from the point of observation.  The corresponding
  cumulative count function $\tilde N(x,t)$ then represents the number
  of cars that have yet to pass a certain point in the road at a given
  time. This means that the value of $\tilde N(x,\cdot)$ will decrease
  over time as more cars pass the point of observation $x$, while
  $N(x,\cdot)$ increases.}, so that the weights $\Phi_{ij \alpha}$
decrease with the car number (increasing $z_i$).  Averaging between 
Lagrangian particles is different
from the more traditional approach \eqref{eq:arithmetic-mean}.  The
contrast between the position $x_i$ of car $i$ and the Lagrangian
coordinate $z_i$ is that $x_i$ represents the actual physical position
of the car in space, while $z_i$ is a mathematical construct
(labelling) used to describe the car's position relative to other cars.

The corresponding macroscopic equation becomes
\begin{equation}\label{eq:intro-new-nonlocal-PDE}
  \partial_t \left(\frac{1}{u(z,t)}\right)
  -\partial_z V\left( \,\left[\,
  \overline{\frac{1}{u(z,t)}}\,\right]^{-1}
  \, \right)=0, \quad z\in \R, \,\, t>0, 
\end{equation}  
where
\begin{equation}\label{eq:intro-new-nonlocal-PDE-II}
	\overline{\frac{1}{u(z,t)}}= \int_z^\infty \Phi_{\alpha}(\zeta-z)
	\frac{1}{u(\zeta,t)}\, d\zeta.
\end{equation}
In other words, in terms of the Lagrangian variable
$y=y(z,t)=\frac{1}{u(z,t)}$ (``amount of road per car'', also known as
``spacing'' or ``gap'' between cars), we obtain a nonlocal conservation
law of the form
\begin{equation}\label{eq:intro-new-model}
  \partial_t y-\partial_z W\left(\overline{y}\right)=0, 
  \quad 
  \overline{y}(z,t)=
  \int_z^\infty \Phi_{\alpha}(\zeta-z)y(\zeta,t)\, d\zeta,
  \quad W(y):=V\left(\frac{1}{y}\right).
\end{equation}

Formally, as the filter size $\alpha$ approaches zero, the local
Lagrangian PDE $\partial_t (1/u)-\partial_z V(u)=0$ is obtained. This
PDE can be transformed into the Eulerian PDE \eqref{eq:LWR-eqn}
through a change of variable \cite{Wagner:1987aa}.  The nonlocal LWR
equations \eqref{eq:LWR-eqn} are Eulerian models, while the model
\eqref{eq:intro-new-nonlocal-PDE} analysed in this paper is a
Lagrangian model. The main difference between the two is the
coordinate system used. In Eulerian coordinates, traffic is observed
from a fixed point and the coordinates are fixed in space, while in
Lagrangian coordinates, traffic is observed from a car travelling with
the flow and coordinates move with the cars.  In Eulerian coordinates,
the main variable is density $u$ as a function of space $x$ and time
$t$, while in the Lagrangian formulation, it is spacing $y$ as a
function of ``car number" $z$ and time $t$ (the smaller the spacing,
the higher the traffic density, and vice versa).  Lagrangian traffic
flow models have become increasingly important in recent times, as
advancements in technology have allowed for the collection of data via
GPS, on-board sensors, and smartphones.  This provides more accurate
Lagrangian traffic measurements.

We will see that the mathematical and numerical analysis of the
Lagrangian PDE \eqref{eq:intro-new-nonlocal-PDE} becomes fairly
simple, whereas its Eulerian counterpart leads to a complicated PDE
that appears much harder to analyse directly. Besides, we are able to
rigourously justify the zero-filter limit of
\eqref{eq:intro-new-nonlocal-PDE}. More precisely, we show the
existence, uniqueness, and $L^1$ stability of solutions to
\eqref{eq:intro-new-nonlocal-PDE}, for any fixed value 
of the filter size $\alpha>0$. 
To prove the existence of a weak solution, we use
approximate solutions obtained from the FtL model 
and compactness arguments. The resulting solution 
is regular enough to make it easy to prove the
uniqueness and stability of the weak solution.  A key aspect of our
approach is that we derive estimates and strong convergence for the
filtered variable
\begin{equation}\label{eq:filter-var}
  w:=\overline{y}=\int_z^\infty \Phi_{\alpha}(\zeta-z)
  y(\zeta,t)\, d\zeta,
\end{equation}
rather than for the original variable $y=1/u$ itself.  This allows for
simple proofs of estimates that are independent of the filter size
$\alpha$, which is at variance with the more traditional analyses of
\cite{BlandinGoatin:16,ChiarelloGoatin:18,FriedrichEtal:18,GoatinScialanga:16}.
As a result, we can consider a sequence
$\left\{w_{\alpha}=\overline{y_\alpha}\right\}_{\alpha>0}$ of filtered
solutions of \eqref{eq:intro-new-nonlocal-PDE} and show that a
subsequence converges strongly in $L^1_{\operatorname{loc}}$ to a
function $w$ that is a solution of the (Lagrangian form) of the
LWR equation \eqref{eq:LWR-eqn}. Besides, we demonstrate that
$w_{\alpha}$ dissipates any convex entropy function, which implies
that the limit $w$ is the unique Kru{\v{z}}kov entropy solution of the
LWR equation. We even provide an explicit rate of convergence, namely that
$\norm{w_{\alpha}(t)-w(t)}_{L^1(\R)}\leq C \sqrt{\alpha}$.  It is
worth noting that the zero-filter limit has only recently been
successfully studied in \cite{CocliteLimit:20,Colombo:22}, but only
for the first nonlocal conservation law in
\eqref{eq:nonlocalCL-intro}.  Our work provides a different approach
for studying an alternative nonlocal Langrangian model
\eqref{eq:intro-new-nonlocal-PDE}, which is distinct from
\eqref{eq:nonlocalCL-intro}, and its zero-filter limit.

In this study, we also demonstrate that the variable $y_\alpha$ 
converges strongly through the estimation of $w_\alpha-y_\alpha$ 
in the $L^1$ norm for exponential kernels. Based on numerical experiments, 
the same appears to be true for Lipschitz kernels. However, the 
convergence is not expected for general discontinuous kernels. 
Our numerical experiments indicate that as $\alpha$ approaches zero, 
oscillations persist in the variable $y_\alpha$ 
for discontinuous kernels.

\smallskip

The paper is structured as follows:
Section \ref{sec:conv-scheme} analyzes a fully discrete scheme for $w_\alpha$.
Section \ref{sec:PDE-for-y} explores the connection between 
$y_\alpha=1/u_\alpha$ and $w_\alpha$.
Section \ref{sec:Eulerian} provides an Eulerian formulation
for the discussed Lagrangian PDE for easy comparison with existing literature.
Section \ref{sec:zero-filter} examines the zero-filter limit.
Finally, Section \ref{sec:numerics} showcases numerical examples.

\section{Analysis of a fully discrete scheme}\label{sec:conv-scheme}

In this section, we will present and analyze a fully discrete
numerical approach based on the nonlocal FtL model
\eqref{eq:nonlocal-FtL-harmonic}.  The numerical examples for this
approach will be provided in Section \ref{sec:numerics}. Before that,
however, we will list some properties of the averaging kernel
$\Phi_\alpha$ and the associated averaging operator.
 
Let $\Phi:\R_+\to \R_+$ be a non-increasing function such that
\begin{equation}\label{eq:kernel-ass1}
  \int_0^\infty \Phi(z)\,dz = 1\ \ \ \text{and}\ \ \
  \int_0^\infty z\Phi(z)\,dz < \infty.
\end{equation}
For $\alpha>0$ define
\begin{equation}\label{eq:kernel-ass2}
  \Phi_\alpha(z)=\frac{1}{\alpha}
  \Phi\Bigl(\frac{z}{\alpha}\Bigr),
\end{equation}
and for any suitable function $h:\R\to\R$ define
\begin{equation}\label{eq:average-op}
  \overline{h}(z)=\int_z^\infty \Phi_\alpha(\zeta-z)h(\zeta)\,d\zeta=
  \int_0^\infty \Phi_\alpha(\zeta)h(z+\zeta)\,d\zeta.
\end{equation}
We have that
\begin{align*}
  & \overline{h}'(z)
  =\overline{h'}(z)\ \ 
    \text{if $h$ is differentiable,}\\
  & \norm{\overline{h}}_{L^p(\R)}
  \le \norm{h}_{L^p(\R)},\ \ p\in [1,\infty],\\
  & \overline{h}'(z)
  =\int_0^\infty \Phi'_\alpha(\zeta)
    \left[h(z)-h(z+\zeta)\right]\,d\zeta
    =-\int_z^\infty \Phi'_\alpha(\zeta-z)
    \left[h(\zeta)-h(z)\right]\,d\zeta,   
\end{align*}
if $\Phi$ is differentiable.

We shall consider a time-forward Euler discretization of the system of
ODEs \eqref{eq:nonlocal-FtL-harmonic}. We set $\Dz=\ell>0$ and employ
the usual notation $z_j=(j-1/2)\Dz$, $j \in \Z/2$, $z_{1/2}=0$, and
$\lambda=\Dt/\Dz$, where $\Dt>0$ is a sufficiently small (to be
specified) number.  Subtracting the equation for $x_i'$ in
\eqref{eq:nonlocal-FtL-harmonic} from that for $x_{i+1}'$ and dividing
the result by $\Dz$, we get
\begin{equation}\label{eq:ode1}
  \frac{d}{dt} \Bigl(\frac{1}{u_i(t)}\Bigr) =
  \frac{1}{\Dz} \Bigl( V\Bigl(\, \overline{\frac{1}{u_{i+1}}}\, \Bigr)
  - V\Bigl(\,\overline{\frac{1}{u_{i}}}\,\Bigr)\Bigr),
\end{equation}
where
\begin{equation*}
  \overline{h}_i = \sum_{j\ge i} \Phi_{ij\alpha}h_j, \quad
  \Phi_{ij\alpha}=\int_{z_{j-1/2}}^{z_{j+1/2}} 
  \Phi_\alpha(\zeta-z_{i-1/2})\,d\zeta,
  \quad i\in \Z,
\end{equation*}
and we have used \eqref{eq:uidef}. The semi-discrete 
scheme \eqref{eq:ode1} represents an approximation
of the nonlocal Lagrangian PDE \eqref{eq:intro-new-nonlocal-PDE}. 
Throughout the paper, $\Phi_{ij\alpha}$ and $\Phi_{i,j,\alpha}$ 
are used interchangeably, with either commas or no commas 
in their notation.

To greatly facilitate the analysis, we will shift our focus from the
variable $y=1/u$ to its filtered counterpart by introducing
\begin{equation}\label{eq:wi-average-Wdef}
  w_i=\overline{\frac{1}{u_{i}}}, 
  \quad W(w)=V\Bigl(\frac{1}{w}\Bigr),
  \quad \text{$V\in C^1([0,\infty))$ non-increasing}
\end{equation}
as previously mentioned in the introduction,
cf.~\eqref{eq:filter-var}.

Applying the $\overline{\;\cdot\;}$ operator to \eqref{eq:ode1}, we
get
\begin{equation*}
  \frac{d}{dt} w_i =
  \frac{1}{\Dz}\left(\overline{W(w_{i+1})}
    -\overline{W(w_{i})}\right),
  \ \ \ i\in\Z.  
\end{equation*}
We shall analyse the following scheme for the this system of ODEs:
\begin{equation}
  \label{eq:wscheme}
  \begin{aligned}
    w^{n+1}_i &= w^n_i + \lambda
    \left(\oWn_{i+1/2}-\oWn_{i-1/2}\right),\ \ n\ge 0,\\
    w^0_i &= \sum_{j\ge i} \Phi_{ij\alpha} y_{0,j},
  \end{aligned}
  \ \ i\in \Z,
\end{equation}
where $w^n_i\approx w_i(n\Dt)$ and
\begin{equation*}
  \oWn_{i-1/2}=\sum_{j\ge i} \Phi_{ij\alpha} W(w^n_j),\ \ \ \
  y_{0,i}=\frac{1}{u_{0,i}}=\frac{x_{i+1}(0)-x_i(0)}{\ell}.
\end{equation*}

It is readily verified that the infinite 
matrix $\Phi_{ij\alpha}$ satisfies
\begin{align*}
  \Phi_{i-1,j-1,\alpha}
  &=\int_{z_{j-3/2}}^{\zjm}\Phi_\alpha(\zeta-z_{i-3/2})\,d\zeta
    =\int_{\zjm}^{\zjp} \Phi_\alpha(\zeta-z_{i-1/2})\,d\zeta=\Phi_{ij\alpha},\\
  \sum_{j\ge i} \Phi_{ij\alpha}
  &= \sum_{j\ge 1} \Phi_{1j\alpha} = \sum_{j\ge 1}
    \int_{\zjm}^{\zjp}
    \Phi_\alpha(\zeta)\,d\zeta=\int_0^\infty
    \Phi(\zeta)\,d\zeta = 1,\\
  \sum_{i\in\Z} \sum_{j\ge i} \Phi_{ij\alpha} \, \mu_j
  &=\sum_{i\in\Z} \sum_{k=1}^\infty \Phi_{i,i+k-1,\alpha} \,  \mu_{i+k-1}
    =\sum_{i\in\Z} \sum_{k=1}^\infty \Phi_{1,k,\alpha} \,  \mu_{i+k-1}
    =\sum_{j\in\Z} \mu_j \sum_{k=1}^\infty \Phi_{1,k,\alpha} \\
  &= \sum_{i\in\Z} \mu_i,\\
  \sum_{i\in\Z} \Bigl|\sum_{j\ge i} \Phi_{ij\alpha} \, \mu_j\Bigr|^p
  &\le \sum_{i\in\Z} \abs{\mu_j}^p, \ \ 1\le p<\infty,\\
  \sup_{i\in\Z}\Bigl|\sum_{j\ge i} \Phi_{ij\alpha} \, \mu_j\Bigr|
  &\le \sup_{i\in\Z}\abs{\mu_i}.
\end{align*}

The following lemma demonstrates that the scheme \eqref{eq:wscheme}
for the filtered variable $w=\overline{y}$ adheres to the classical
monotonicity criteria of Harten, Hyman, and Lax.  The monotonicity of
the scheme ensures that the numerical solution does not create
spurious oscillations or produce unphysical values outside of the set
of initial conditions. Note that the (exact) solution operator for the 
original variable $y=1/u$ is not monotone.

\begin{lemma}\label{lem:monotone}
  If $\Dt$ and $\Dx$ are chosen such that the CFL -condition
  \begin{equation}\label{eq:CFLcond}
    0\le \lambda \sup_wW'(w) \le 1
  \end{equation}
  holds, then the scheme \eqref{eq:wscheme} is monotone in the sense
  that 
  \begin{equation*}
    w^n_i \ge \widetilde{w}^n_i \ \ \text{for all $i\in\Z$}  \ \ \
    \Longrightarrow\ \ \
    w^{n+1}_i \ge \widetilde{w}^{n+1}_i \ \ \text{for all $i\in\Z$,}
  \end{equation*}
  where $\widetilde{w}^{n+1}$ is a corresponding 
  solution of \eqref{eq:wscheme}.
\end{lemma}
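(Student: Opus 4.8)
The plan is to recognize \eqref{eq:wscheme} as a componentwise nondecreasing update map and then invoke the standard fact that such a map preserves the componentwise order; this is the ``monotone scheme'' argument of Harten--Hyman--Lax adapted to the nonlocal stencil. First I would rewrite the increment in a form that exhibits its structure. Using the shift (Toeplitz) identity $\Phi_{i+1,j,\alpha}=\Phi_{i,j-1,\alpha}$ recorded before the lemma, one has $\oWn_{i+1/2}=\sum_{j\ge i}\Phi_{ij\alpha}W(w^n_{j+1})$, so that
\[
\oWn_{i+1/2}-\oWn_{i-1/2}=\sum_{j\ge i}\Phi_{ij\alpha}\bigl[W(w^n_{j+1})-W(w^n_j)\bigr],
\]
and hence the scheme reads
\[
w^{n+1}_i=w^n_i+\lambda\sum_{j\ge i}\Phi_{ij\alpha}\bigl[W(w^n_{j+1})-W(w^n_j)\bigr]=:G_i\bigl((w^n_k)_{k\in\Z}\bigr).
\]
Note that $G_i$ depends only on the components $w^n_k$ with $k\ge i$, which is the expected upwind structure, and that the series converges absolutely because $\sum_{j\ge i}\Phi_{ij\alpha}=1$ while $W$ stays bounded along the iteration (by the a priori bounds on $w^n$ together with $V\in C^1$).

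The core step is to show that $\delta^{n+1}_i\ge 0$ whenever $\delta^n_k:=w^n_k-\widetilde{w}^n_k\ge 0$ for all $k$. Writing $W(w^n_k)-W(\widetilde{w}^n_k)=\theta^n_k\,\delta^n_k$ with $\theta^n_k:=\int_0^1 W'\bigl(\widetilde{w}^n_k+s\,\delta^n_k\bigr)\,ds$, and using that $W'(w)=-V'(1/w)/w^2\ge 0$ (since $V$ is non-increasing), we get $0\le\theta^n_k\le\sup_w W'(w)$. Substituting into the rewritten scheme and reindexing the two sums exactly as above, $\delta^{n+1}_i$ becomes a linear combination $\sum_k c_{ik}\,\delta^n_k$ with
\[
c_{ii}=1-\lambda\,\Phi_{ii\alpha}\,\theta^n_i,\qquad c_{im}=\lambda\bigl(\Phi_{i,m-1,\alpha}-\Phi_{i,m,\alpha}\bigr)\theta^n_m\ \ (m>i),\qquad c_{im}=0\ \ (m<i).
\]
Two elementary facts about the kernel make every coefficient nonnegative: (i) $\Phi_{ij\alpha}$ is non-increasing in $j$, because it is the integral of the non-increasing function $\Phi_\alpha(\cdot-z_{i-1/2})$ over the consecutive intervals $[z_{j-1/2},z_{j+1/2}]$ of equal length $\Dz$, so $\Phi_{i,m-1,\alpha}-\Phi_{i,m,\alpha}\ge 0$; and (ii) $\Phi_{ii\alpha}=\int_0^{\Dz}\Phi_\alpha(\zeta)\,d\zeta\le\int_0^\infty\Phi_\alpha=1$, which combined with $0\le\theta^n_i\le\sup_w W'(w)$ and the CFL condition \eqref{eq:CFLcond} yields $c_{ii}\ge 1-\lambda\sup_w W'(w)\ge 0$. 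Hence $\delta^n_k\ge 0$ for all $k$ forces $\delta^{n+1}_i=\sum_k c_{ik}\delta^n_k\ge 0$, which is precisely the monotonicity assertion.

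I expect the only genuinely delicate point to be the handling of the infinite sums: absolute convergence of the defining series, the legitimacy of splitting and reindexing them termwise, and the uniform boundedness of $W$ (hence of the $\theta^n_k$) along the iteration. All of this should follow from the $\ell^\infty$ and positivity bounds on $w^n$ together with the regularity of $V$; once these are in place, the remainder is the purely algebraic bookkeeping powered by the shift invariance of $\Phi_{ij\alpha}$ and the two sign facts (i)--(ii). No entropy or BV machinery is needed here.
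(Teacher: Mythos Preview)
Your proposal is correct and follows essentially the same route as the paper: the paper simply computes $\partial w^{n+1}_i/\partial w^n_k$ directly and observes the same three-case structure with the same two sign facts ($\Phi_{ii\alpha}\le 1$ and $\Phi_{i,k,\alpha}-\Phi_{i,k+1,\alpha}\ge 0$) to conclude nonnegativity under the CFL condition. Your finite-increment version with the mean-value coefficients $\theta^n_k$ is the natural two-sequence analogue of that partial-derivative computation and yields the identical coefficient pattern; the only extra content in your write-up is the (welcome) remark on absolute convergence of the infinite sums, which the paper leaves implicit.
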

\begin{proof}
  We compute
  \begin{equation*}
    \frac{\partial w^{n+1}_i}{\partial w^n_k}=
    \begin{cases}
      0, &k<i,\\
      1-\lambda \Phi_{ii\alpha} W'(w_i), &k=i,\\
      \lambda \bigl(\Phi_{ik\alpha} -\Phi_{i,k+1,\alpha}\bigr)
      W'(w_k), &k>i,
    \end{cases}\ \ \ \ge 0,
  \end{equation*}
  if \eqref{eq:CFLcond} holds, since $\Phi_{ii\alpha}\le 1$ and 
  $\Phi_{ik\alpha} -\Phi_{i,k+1,\alpha}\ge 0$.
\end{proof}

As a direct result of the monotonicity, the scheme \eqref{eq:wscheme}
for the filtered variable $w$ is also $L^1$ contractive (stable with
respect to the initial data).

\begin{corollary}\label{cor:l1contr}
  Assume that the CFL-condition \eqref{eq:CFLcond} holds
  and let $\widetilde{w}^n_i$ be the result of applying the scheme
  \eqref{eq:wscheme} to the initial data $\widetilde{y}_{0,i}$. Then
  \begin{equation*}
    \Dz \sum_i\abs{w^n_i-\widetilde{w}^n_i}\le \Dz
    \sum_i\abs{w^0_i-\widetilde{w}^0_i}
    = \Dz \sum_i\abs{y_{0,i}-\tilde{y}_{0,i}}.
  \end{equation*}
\end{corollary}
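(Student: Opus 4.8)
The plan is to read off the $L^1$ contraction from the monotonicity of Lemma~\ref{lem:monotone} together with the conservative (sum-preserving) structure of \eqref{eq:wscheme}, via the classical Crandall--Tartar argument, and then to descend from the filtered datum $w^0$ to the raw datum $y_0$ using the $\ell^1$-stability of the discrete averaging matrix $\Phi_{ij\alpha}$. \emph{Conservation.} Let $H$ denote the one-step map $w^n\mapsto w^{n+1}$ defined by \eqref{eq:wscheme} with $\Phi$, $W$, $\lambda$ fixed, and let $a=(a_i)_{i\in\Z}$, $b=(b_i)_{i\in\Z}$ be sequences with $a-b\in\ell^1(\Z)$. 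Writing $G_{i-1/2}:=\sum_{j\ge i}\Phi_{ij\alpha}\bigl(W(a_j)-W(b_j)\bigr)$, we have $H(a)_i-H(b)_i=(a_i-b_i)+\lambda\bigl(G_{i+1/2}-G_{i-1/2}\bigr)$, and $(G_{i-1/2})_{i\in\Z}\in\ell^1(\Z)$ because $W$ is Lipschitz on the range of the iterates (which is implicit in \eqref{eq:CFLcond}) and the averaging matrix maps $\ell^1$ into $\ell^1$; summing over $i\in\Z$ therefore kills the telescoping flux term and yields $\sum_i\bigl(H(a)_i-H(b)_i\bigr)=\sum_i(a_i-b_i)$.

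\emph{Crandall--Tartar.} With $a\vee b$ the componentwise maximum, $a\vee b\ge a$ and $a\vee b\ge b$, so Lemma~\ref{lem:monotone} gives $H(a\vee b)\ge H(a)$ and $H(a\vee b)\ge H(b)$, hence $H(a\vee b)\ge H(a)\vee H(b)$ componentwise. Since $a\vee b-b=(a-b)^+\in\ell^1(\Z)$, the conservation identity applied to the pair $(a\vee b,b)$ gives
\[
\sum_i\bigl(H(a)_i-H(b)_i\bigr)^+=\sum_i\Bigl(\bigl(H(a)\vee H(b)\bigr)_i-H(b)_i\Bigr)\le\sum_i\bigl(H(a\vee b)_i-H(b)_i\bigr)=\sum_i\bigl(a_i-b_i\bigr)^+ .
\]
Swapping $a$ and $b$ and adding the two inequalities yields $\sum_i\abs{H(a)_i-H(b)_i}\le\sum_i\abs{a_i-b_i}$; multiplying by $\Dz$ and iterating this with $(a,b)=(w^m,\widetilde w^m)$ for $m=0,1,\dots,n-1$ gives $\Dz\sum_i\abs{w^n_i-\widetilde w^n_i}\le\Dz\sum_i\abs{w^0_i-\widetilde w^0_i}$.

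\emph{From $w^0$ to $y_0$, and the obstacle.} Finally, $w^0_i-\widetilde w^0_i=\sum_{j\ge i}\Phi_{ij\alpha}\bigl(y_{0,j}-\widetilde y_{0,j}\bigr)$, so the $\ell^p$-stability estimate for the averaging matrix displayed just before Lemma~\ref{lem:monotone}, taken with $p=1$, gives $\Dz\sum_i\abs{w^0_i-\widetilde w^0_i}\le\Dz\sum_i\abs{y_{0,i}-\widetilde y_{0,i}}$; chained with the previous inequality this is the claim. The only point requiring attention — the closest thing to an obstacle in an otherwise textbook argument — is the summability bookkeeping: one needs $y_0-\widetilde y_0\in\ell^1(\Z)$ (which then forces $w^0-\widetilde w^0\in\ell^1(\Z)$ via the same averaging bound) and the Lipschitz continuity of $W$ on the interval containing all iterates, so that the telescoping series in the conservation step truly vanishes. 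Everything else uses nothing beyond Lemma~\ref{lem:monotone} and that identity.
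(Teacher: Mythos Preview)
Your proof is correct and follows the same route as the paper: the paper's proof simply invokes the Crandall--Tartar lemma on the set $D_{a,b}$, and you have spelled out that argument (conservation from the telescoping flux structure, then monotonicity via Lemma~\ref{lem:monotone}). One small remark: in the final step you prove the inequality $\Dz\sum_i\abs{w^0_i-\widetilde w^0_i}\le\Dz\sum_i\abs{y_{0,i}-\widetilde y_{0,i}}$, whereas the statement in the paper displays an equality sign there; the inequality is what actually follows from the $\ell^1$-bound on the averaging matrix (equality need not hold in general), so your version is the correct one and is all that is used downstream.
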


\begin{proof}
  Since the scheme is monotone, we can use 
  the Crandall-Tartar lemma \cite[Lemma 2.13]{Holden:2015aa} on the set
  \begin{equation*}
    D_{a,b}=\Bigl\{ \seq{w_i}_{i\in\Z} \;\Bigm|\; 1\le w_i<\infty,\  \
    \Dz\sum_{i\le 0} \abs{w_i-a}<\infty,
    \ \ \Dz\sum_{i\ge 0} \abs{w_i-b}<\infty\Bigr\},  
  \end{equation*}
  and conclude that the corollary holds.
\end{proof}

The monotonicity of the scheme \eqref{eq:wscheme} for the filtered
variable implies several basic estimates that are independent of the
filter size $\alpha$. This is a key feature of using the filtered
variable, as it allows for the numerical scheme to be stable and
well-balanced as $\alpha \to 0$. These estimates are not used to prove
the convergence of the scheme to the filtered version of the nonlocal
Lagrangian PDE \eqref{eq:intro-new-nonlocal-PDE} (for fixed $\alpha$),
but rather to address the behavior of the scheme in the limit as
$\alpha$ approaches zero.  This is important because it helps to
ensure consistency with the original LWR model. 
We will return to the zero-filter limit of
\eqref{eq:intro-new-nonlocal-PDE} in Section \ref{sec:zero-filter}.

\begin{corollary}
  \label{cor:bounds}
  Assume that the CFL-condition \eqref{eq:CFLcond} holds. Then
  \begin{align}
    \label{eq:supbnd}
    1\le \inf_{i} y_{0,i}
   & \le  w^n_i\le \sup_{i} y_{0,i},
    \\
    \label{eq:bvbound}
    \sum_i\abs{w^n_{i+1}-w^n_i}
    &\le \sum_i\abs{y_{0,i+1}-y_{0,i}},
    \\
    \label{eq:L1cont}
    \Dz\sum_i \abs{w^{n+1}_i-w^n_i}
    & \le \Dt \norm{W'}_{L^\infty}\abs{y_{0,\cdot}}_{BV}.
  \end{align}
\end{corollary}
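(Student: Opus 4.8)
The plan is to obtain all three estimates from the two facts already established for the $w$-scheme \eqref{eq:wscheme} --- monotonicity (Lemma~\ref{lem:monotone}) and $L^1$-contraction (Corollary~\ref{cor:l1contr}) --- together with two elementary structural observations. First, I would note that every constant sequence $w_i\equiv c\ge 1$ is a steady state of \eqref{eq:wscheme}: since $\sum_{j\ge i}\Phi_{ij\alpha}=1$ one has $\oWn_{i\pm1/2}=W(c)$, hence $w^{n+1}_i=c$. Second, the shift invariance $\Phi_{i-1,j-1,\alpha}=\Phi_{ij\alpha}$ recorded above shows that the scheme commutes with the spatial shift $i\mapsto i+1$: if $\seq{w^n_i}$ solves \eqref{eq:wscheme} with data $\seq{y_{0,i}}$, then $\seq{w^n_{i+1}}$ solves it with data $\seq{y_{0,i+1}}$.

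For \eqref{eq:supbnd} I would first use the car ordering $x_{i+1}(0)\ge x_i(0)+\ell$ to get $y_{0,i}\ge 1$, so that $a:=\inf_i y_{0,i}\ge 1$, while $b:=\sup_i y_{0,i}<\infty$ because $y_{0,\cdot}\in BV(\Z)$. Since $w^0_i=\sum_{j\ge i}\Phi_{ij\alpha}y_{0,j}$ is a convex combination of the $y_{0,j}$, one has $a\le w^0_i\le b$. Comparing $\seq{w^n_i}$ with the admissible constant steady states $a$ and $b$ (both lying in $[1,\infty)$) via monotonicity then propagates $a\le w^n_i\le b$ to all $n$.

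For \eqref{eq:bvbound} I would apply Corollary~\ref{cor:l1contr} with $\widetilde w^n_i:=w^n_{i+1}$, which by shift invariance is the scheme solution generated by $\widetilde y_{0,i}=y_{0,i+1}$; the contraction estimate reads $\Dz\sum_i\abs{w^n_i-w^n_{i+1}}\le\Dz\sum_i\abs{y_{0,i}-y_{0,i+1}}$, and dividing by $\Dz$ gives the claim. For \eqref{eq:L1cont} I would start from $w^{n+1}_i-w^n_i=\lambda(\oWn_{i+1/2}-\oWn_{i-1/2})$, so that the left side of \eqref{eq:L1cont} equals $\Dt\sum_i\abs{\oWn_{i+1/2}-\oWn_{i-1/2}}$. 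Using $\Phi_{i-1,j-1,\alpha}=\Phi_{ij\alpha}$ I would rewrite $\oWn_{i+1/2}-\oWn_{i-1/2}=\sum_{k\ge i}\Phi_{ik\alpha}\bigl(W(w^n_{k+1})-W(w^n_k)\bigr)$, i.e.\ the spatial difference of $W(w^n_\cdot)$ passed through the averaging matrix; its $\ell^1$-boundedness then bounds $\sum_i\abs{\oWn_{i+1/2}-\oWn_{i-1/2}}$ by $\sum_k\abs{W(w^n_{k+1})-W(w^n_k)}$. Since $w^n_k\ge 1$ keeps $1/w^n_k\in(0,1]$, $W$ is Lipschitz there with constant $\norm{W'}_{L^\infty}$, and combining this with \eqref{eq:bvbound} yields $\Dz\sum_i\abs{w^{n+1}_i-w^n_i}\le\Dt\,\norm{W'}_{L^\infty}\abs{y_{0,\cdot}}_{BV}$.

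I do not expect a genuine obstacle here: the only step that needs a little care is the identity expressing the flux difference $\oWn_{i+1/2}-\oWn_{i-1/2}$ as the averaging operator applied to a spatial difference of $W(w^n_\cdot)$, which hinges on the translation invariance of the weight matrix; everything else is a direct application of the monotonicity and $L^1$-contraction machinery already in place, plus the elementary lower bound $y_{0,i}\ge 1$ coming from the ordering of the cars.
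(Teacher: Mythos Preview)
Your proposal is correct and follows essentially the same approach as the paper: constants as steady states plus monotonicity for \eqref{eq:supbnd}, the spatial shift in Corollary~\ref{cor:l1contr} for \eqref{eq:bvbound}, and the translation identity $\Phi_{i-1,j-1,\alpha}=\Phi_{ij\alpha}$ to rewrite the flux difference for \eqref{eq:L1cont}. The only cosmetic difference is that for \eqref{eq:L1cont} the paper first invokes the $L^1$-contraction with $\widetilde w^n_i=w^{n+1}_i$ to reduce to the time step $n=0$ and then carries out your computation there, whereas you compute directly at level $n$ and close with \eqref{eq:bvbound}; both routes are equivalent.
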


\begin{proof}
  To prove \eqref{eq:supbnd}, observe that the constants
  $c=\inf_{i} y_{0,i}$ and $C=\sup_{i} y_{0,i}$ are solutions to the
  scheme \eqref{eq:wscheme} and then apply monotonicity. To prove the $BV$
  bound \eqref{eq:bvbound}, set $\widetilde{w}^n_i=w^n_{i+1}$ in
  Corollary~\ref{cor:l1contr}. To prove $L^1$-continuity
  \eqref{eq:L1cont}, choose $\widetilde{w}=w^{n+1}_i$ in
  Corollary~\ref{cor:l1contr} and calculate
  \begin{align*}
    \Dz \sum_i\abs{w^{n+1}_i-w^n_i}
    &\le \Dz \sum_i\abs{w^1_i-w^0_i}
    = \Dt \sum_i\abs{\oW^{\;0}_{i+1/2}-\oW^{\;0}_{i-1/2}}\\
    &= \Dt \sum_i \Bigl|
      \sum_{j\ge i+1} \Phi_{i+1,j,\alpha}W(w^0_{j})- \sum_{j\ge i}
      \Phi_{ij\alpha}W(w^0_{j})\Bigr|\\
    &=\Dt \sum_i \Bigl|
      \sum_{j\ge i} \Phi_{ij\alpha}W(w^0_{j+1})- \sum_{j\ge i}
      \Phi_{ij\alpha}W(w^0_{j})\Bigr|\\
    &\le \Dt \sum_i \sum_{j\ge i}\Phi_{ij\alpha} \abs{W(w^0_{j+1})-W(w^0_{j})}
    \\
    &\le \Dt \norm{W'}_\infty\sum_i \abs{w^0_{j+1}-w^0_{j}}
      \le \Dt\norm{W'}_\infty \abs{y_{0,\cdot}}_{BV}.
  \end{align*}
\end{proof}

Next, we will estimate the variations in space and time of the
solution $w_i^n$ of the scheme \eqref{eq:wscheme} for the filtered
variable $w=\overline{y}$. These estimates will be dependent on the
filter size $\alpha$, but they will be sufficient to demonstrate
uniform convergence to a Lipschitz continuous limit $w_\alpha(x,t)$
for a fixed value of $\alpha$. As we wish to bound the ``derivatives''
of $w^n_i$, let us define
\begin{equation*}
  \Delta w^n_{j+1/2}=w^n_{j+1}-w^n_j,\ \ 
  \Delta W_j=W(w_{j+1})-W(w_j)\ \ \text{and}\ \ 
  \Delta \oW_j = \oW_{j+1/2}-\oW_{j-1/2},
\end{equation*}
and set
\begin{equation}\label{eq:supw}
  \left(\Delta\widehat{w}\right)^n=\sup_i\abs{\Delta
    w^n_{i+1/2}}.
\end{equation}
Note that $\Delta \oW_i = \sum_{j\ge i} \Phi_{ij\alpha}\Delta W_j$.

\begin{lemma}
  \label{lem:derivs}
  Assume that the CFL-condition \eqref{eq:CFLcond} holds. We have
  \begin{align}
    \label{eq:xderiv}
    (\Delta\widehat{w})^{n}
    &\le (\Delta\hat{w})^0 \exp\Bigl(\frac{C}{\alpha} t^n\Bigr),\\
    \sup_i\abs{w^{n+1}_i - w^n_i} \label{eq:tderiv}
    &\le \lambda \norm{W'}_\infty (\Delta\widehat{w})^0
      \exp\Bigl(\frac{C}{\alpha} t^n\Bigr),
  \end{align}
  where $t^n=n\Dt$, $(\Delta\widehat{w})^{n}$ is defined in
  \eqref{eq:supw}, and the constant $C$ 
  is independent of $n$, $\Dz$ and $\alpha$.
\end{lemma}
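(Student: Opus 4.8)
The plan is to derive a closed linear recursion for the spatial differences $\Delta w^n_{i+1/2}:=w^n_{i+1}-w^n_i$ whose coefficients are nonnegative under the CFL condition \eqref{eq:CFLcond} and sum to at most $1+\lambda\norm{W'}_\infty\Phi_{ii\alpha}$; since $\Phi_{ii\alpha}$ is of size $\Dz/\alpha$ and $n\lambda\Dz=t^n$, iterating that bound gives \eqref{eq:xderiv} at once, after which \eqref{eq:tderiv} is obtained by reading the one-step increment $w^{n+1}_i-w^n_i$ directly off the scheme \eqref{eq:wscheme}.

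To build the recursion I would first use the translation invariance $\Phi_{i+1,j+1,\alpha}=\Phi_{ij\alpha}$ to rewrite $\oWn_{i+1/2}=\sum_{k\ge i}\Phi_{ik\alpha}W(w^n_{k+1})$ and $\oWn_{i+3/2}=\sum_{k\ge i}\Phi_{ik\alpha}W(w^n_{k+2})$. Subtracting \eqref{eq:wscheme} at index $i$ from \eqref{eq:wscheme} at index $i+1$ then produces the discrete second difference
\[
  \Delta w^{n+1}_{i+1/2}=\Delta w^n_{i+1/2}
  +\lambda\sum_{k\ge i}\Phi_{ik\alpha}\bigl(\Delta W^n_{k+1}-\Delta W^n_k\bigr),
  \qquad \Delta W^n_k:=W(w^n_{k+1})-W(w^n_k).
\]
An Abel summation by parts, together with $\Phi_{ik\alpha}\to 0$ as $k\to\infty$ and the a priori finiteness of $(\Delta\widehat w)^{n}$ — which holds since $(\Delta\widehat w)^{0}\le\abs{y_{0,\cdot}}_{BV}<\infty$ and is propagated by the recursion itself — to kill the tail term, turns this into
\[
  \Delta w^{n+1}_{i+1/2}=\bigl(1-\lambda\Phi_{ii\alpha}a^n_i\bigr)\Delta w^n_{i+1/2}
  +\lambda\sum_{k\ge i+1}\bigl(\Phi_{i,k-1,\alpha}-\Phi_{ik\alpha}\bigr)a^n_k\,\Delta w^n_{k+1/2},
\]
where $a^n_k\in[0,\norm{W'}_\infty]$ is the mean slope defined by $\Delta W^n_k=a^n_k\,\Delta w^n_{k+1/2}$ (note $W'\ge 0$, as $V$ is non-increasing). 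Because $\Phi$ is non-increasing one has $\Phi_{i,k-1,\alpha}\ge\Phi_{ik\alpha}$, and $\Phi_{ii\alpha}\le 1$; hence under \eqref{eq:CFLcond} every coefficient on the right-hand side is nonnegative, which is exactly the monotonicity of the scheme acting on differences, in the spirit of Lemma~\ref{lem:monotone}.

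I would then take absolute values, bound $\abs{\Delta w^n_{k+1/2}}\le(\Delta\widehat w)^{n}$ and $a^n_k\le\norm{W'}_\infty$, and use $\sum_{k\ge i+1}(\Phi_{i,k-1,\alpha}-\Phi_{ik\alpha})=\Phi_{ii\alpha}$: since the coefficients are nonnegative, their sum is $1-\lambda\Phi_{ii\alpha}a^n_i+\lambda\sum_{k\ge i+1}(\Phi_{i,k-1,\alpha}-\Phi_{ik\alpha})a^n_k\le 1+\lambda\norm{W'}_\infty\Phi_{ii\alpha}$, so $(\Delta\widehat w)^{n+1}\le\bigl(1+\lambda\norm{W'}_\infty\Phi_{ii\alpha}\bigr)(\Delta\widehat w)^{n}$. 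As $\Phi_{ii\alpha}=\int_0^{\Dz}\Phi_\alpha(\zeta)\,d\zeta\le\Dz\,\Phi_\alpha(0)=\frac{\Dz}{\alpha}\Phi(0)$ is independent of $i$, iterating and using $1+x\le e^x$ and $n\lambda\Dz=t^n$ yields \eqref{eq:xderiv} with $C=\Phi(0)\norm{W'}_\infty$. For \eqref{eq:tderiv}, the scheme \eqref{eq:wscheme} gives $w^{n+1}_i-w^n_i=\lambda(\oWn_{i+1/2}-\oWn_{i-1/2})=\lambda\sum_{j\ge i}\Phi_{ij\alpha}\Delta W^n_j$, so $\abs{w^{n+1}_i-w^n_i}\le\lambda\norm{W'}_\infty(\Delta\widehat w)^{n}\sum_{j\ge i}\Phi_{ij\alpha}=\lambda\norm{W'}_\infty(\Delta\widehat w)^{n}$ because the rows of $\Phi_{ij\alpha}$ sum to $1$; substituting \eqref{eq:xderiv} and taking $\sup_i$ gives \eqref{eq:tderiv}.

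The hard part will be the bookkeeping in the middle step: exploiting the shift invariance of $\Phi_{ij\alpha}$ correctly, performing the summation by parts with control of the boundary term, and verifying that \eqref{eq:CFLcond} makes all coefficients of the differenced recursion nonnegative. A secondary point is the estimate $\Phi_{ii\alpha}\le C\Dz/\alpha$, which is precisely what forces the Lipschitz constant of the limit $w_\alpha$ to degenerate no worse than $1/\alpha$ as $\alpha\to 0$, and where (mild) boundedness of $\Phi$ near the origin enters so that $C$ is independent of $\Dz$ and $\alpha$.
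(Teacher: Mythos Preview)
Your proof is correct and follows essentially the same approach as the paper: both derive a one-step recursion for $\Delta w^n_{i+1/2}$ via the shift invariance $\Phi_{i+1,j+1,\alpha}=\Phi_{ij\alpha}$ and a summation by parts, bound the result by $(1+C\lambda\Phi_{ii\alpha})(\Delta\widehat w)^n$, and iterate; the proof of \eqref{eq:tderiv} is identical to the paper's. The only difference is cosmetic: the paper applies the triangle inequality before bounding the telescoping sum and obtains the growth factor $1+2\lambda\norm{W'}_\infty\Phi_{1,1,\alpha}$, whereas you keep the explicit coefficients, observe they are nonnegative under \eqref{eq:CFLcond}, and get the slightly sharper factor $1+\lambda\norm{W'}_\infty\Phi_{ii\alpha}$.
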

\begin{proof}
  We calculate
  \begin{align*}
    \abs{\Delta w^{n+1}_{i+1/2}}
    &=\abs{\Delta w^n_{i+1/2} + \lambda \left(\Delta
      \oWn_{j+1}-\Delta \oWn_{j}\right)}\\
    &\le  \abs{\Delta w^n_{i+1/2}} + \lambda\Bigl|
      \sum_{j\ge i+1} \Phi_{i+1,j,\alpha} \Delta W^n_j - \sum_{j\ge i}
      \Phi_{ij\alpha}\Delta W^n_j\Bigr|\\
    &=\abs{\Delta w^n_{i+1/2}} + \lambda \Bigl|\sum_{j\ge i+1}
      \left(\Phi_{i+1,j,\alpha}-\Phi_{ij\alpha}\right)\Delta W^n_j
      -\lambda\Phi_{1,1,\alpha}\Delta W^n_i\Bigr|\\
    &\le \abs{\Delta w^n_{i+1/2}} -\lambda \sum_{j\ge 1}
      \left(\Phi_{1,j+1,\alpha}-\Phi_{1,j,\alpha}\right)\abs{\Delta W^n_j}
      +\lambda\Phi_{1,1,\alpha}\abs{\Delta W^n_i}\\
    &\le \abs{\Delta w^n_{i+1/2}} + \Dt\norm{W'}_\infty\sum_{j\ge 1}
      \frac{\Phi_{1,j,\alpha}-\Phi_{1,j+1,\alpha}}{\Dz}\abs{\Delta w^n_{j+1/2}}
      + \Dt\norm{W'}_\infty \frac{\Phi_{1,1,\alpha}}{\Dz}\abs{\Delta
      w^n_{j+1/2}}\\
    &\le (\Delta\widehat{w})^n \biggl(1+\Dt\norm{W'}_\infty
    \Bigl(\, \sum_{j\ge 1}
      \frac{\Phi_{1,j,\alpha}-\Phi_{1,j+1,\alpha}}{\Dz}+\frac{\Phi_{1,1,\alpha}}{\Dz}    
      \, \Bigr)\biggr)\\
    &= (\Delta\widehat{w})^n \Bigl(1+2\Dt\norm{W'}_\infty
    \frac{\Phi_{1,1,\alpha}}{\Dz}    
      \Bigr),
  \end{align*}
  which implies \eqref{eq:xderiv}. We can also use this to prove
  \eqref{eq:tderiv},
  \begin{align*}
    \abs{w^{n+1}_i - w^n_i}
    &=\lambda\abs{\Delta\oWn_{i+1/2}}
      \le \lambda \sum_{j\ge i} \Phi_{ij\alpha} \abs{W(w^n_{j+1})-W(w^n_j)}\\
    &\le \lambda \norm{W'}_\infty(\Delta\widehat{w})^n \sum_{j\ge i} \Phi_{ij\alpha}
      \le \lambda \norm{W'}_\infty (\Delta\widehat{w})^0 
      \exp\Bigl(\frac{C}{\alpha} t^n\Bigr).
  \end{align*}
\end{proof}

The main theorem of this section states that the solutions to the
scheme \eqref{eq:wscheme} for the filtered variable converge to a
Lipschitz continuous weak solution of the filtered version of the
nonlocal Lagrangian PDE \eqref{eq:intro-new-nonlocal-PDE} (for a fixed
$\alpha$). To assist the convergence proof, define 
$w_{\Dt,\alpha}(z,t)$ to be the bi-linear interpolation 
of the points $\seq{(z_i,t^n,w^n_i)}$ 
with $j\in \Z$ and $n\ge 0$.

\begin{theorem}\label{thm:alphaconv}
  Let $0<T<\infty$ and assume that as $\Dt\to 0$, $\Dz\to 0$ in such 
  a way that the CFL condition \eqref{eq:CFLcond} is always 
  satisfied. Let $W(\cdot)$ be defined 
  by \eqref{eq:wi-average-Wdef} and consider 
  an initial function $1\le y_0\in BV(\mathbb{R})$. 
  Let $\alpha>0$ be fixed and assume furthermore that 
  the sequence of initial functions 
  $\seq{w_{\Dt,\alpha}(z,0)}_{\Dt>0}$ is such that
  $\abs{\pz w_{\Dt,\alpha}(z,0)}\le M$, where $M$ 
  does not depend on $\Dt$ (but on $\alpha$). 
  Suppose the averaging kernel $\Phi_\alpha$ 
  satisfies \eqref{eq:kernel-ass1}, \eqref{eq:kernel-ass2}. 
  Then there exists a Lipschitz continuous function 
  $w_\alpha:\R\times [0,T]\mapsto \R$ such that
  \begin{equation*}
    \lim_{\Dt\to 0} w_{\Dt,\alpha}=w_\alpha
    \quad \text{in $C(K\times[0,T])$, 
    \,\, $\forall K\subset \subset \R$}.
  \end{equation*}
  Moreover, $w_\alpha$ is a weak (distributional) solution of
  \begin{equation}\label{eq:walphapde}
      \begin{cases}
        \pt w_\alpha = \pz \overline{W(w_\alpha)},&\ \ z\in\R,\ \
        0<t\le
        T,\\
        w_\alpha(z,0)=\overline{y_0},&\ \ z\in\R,
      \end{cases}
  \end{equation}
  where the averaging (overline) operator is defined by
  \eqref{eq:average-op}, i.e.,
  \begin{equation*}
    \int_0^T\int_\R w_\alpha(z,t)\pt\test(z,t)
    -\overline{W(w_\alpha)}\pz\test(z,t)\,dzdt
    = \int_\R w_\alpha(z,T)\test(z,T)- w_\alpha(z,0)\test(z,0)\,dz
  \end{equation*}
  for all test functions $\test\in C^\infty_0(\R\times [0,T])$. 
  The solution is uniquely determined by the initial data.
\end{theorem}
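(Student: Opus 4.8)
The plan is the classical one for monotone finite–difference schemes: from the a priori bounds of this section, extract a locally uniformly convergent subsequence of the interpolants $w_{\Dt,\alpha}$; identify the limit as a weak solution by a Lax--Wendroff–type argument; and finally prove uniqueness and $L^1$–stability, which then promotes the subsequential convergence to convergence of the whole family $\seq{w_{\Dt,\alpha}}_{\Dt>0}$. For the compactness step, Corollary~\ref{cor:bounds} gives $1\le w^n_i\le\norm{y_0}_{L^\infty(\R)}$, so the $w_{\Dt,\alpha}$ are uniformly bounded on $\R\times[0,T]$; and the hypothesis $\abs{\pz w_{\Dt,\alpha}(\cdot,0)}\le M$ reads exactly as $(\Delta\widehat w)^0\le M\,\Dz$, which, inserted into Lemma~\ref{lem:derivs}, gives $\abs{\Delta w^n_{i+1/2}}\le M\,\Dz\,e^{Ct^n/\alpha}$ and $\abs{w^{n+1}_i-w^n_i}\le\Dt\,\norm{W'}_{L^\infty}M\,e^{Ct^n/\alpha}$ for $t^n\le T$. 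Hence the $w_{\Dt,\alpha}$ are Lipschitz on $\R\times[0,T]$ with a constant depending on $\alpha$, $M$, $T$ and $\norm{W'}_{L^\infty}$ but not on $\Dt$, and Arzel\`a--Ascoli on each $\overline K\times[0,T]$ together with a diagonal extraction produces a subsequence converging in $C(K\times[0,T])$, $K\subset\subset\R$, to a Lipschitz $w_\alpha$ with $1\le w_\alpha\le\norm{y_0}_{L^\infty(\R)}$.

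To identify $w_\alpha$ as a weak solution, multiply the scheme \eqref{eq:wscheme} by $\Dz\,\test(z_i,t^n)$ and sum over $i\in\Z$ and over $n$ with $0\le t^n\le T$. Summation by parts in $n$ produces the boundary contributions $\int_\R\bigl(w_\alpha(z,T)\test(z,T)-w_\alpha(z,0)\test(z,0)\bigr)\,dz$ and (in the limit) $-\int_0^T\!\!\int_\R w_\alpha\,\pt\test$, while summation by parts in $i$ (no boundary terms, since $\test$ is compactly supported) produces $-\Dz\Dt\sum_{i,n}\oWn_{i+1/2}(\test^n_{i+1}-\test^n_i)/\Dz$. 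The one point needing care is that $\oWn_{i+1/2}=\sum_{j\ge i+1}\Phi_{i+1,j,\alpha}W(w^n_j)$ is a Riemann sum for $\int_0^\infty\Phi_\alpha(\zeta)W\bigl(w_\alpha(z_{i+1/2}+\zeta,t^n)\bigr)\,d\zeta=\overline{W(w_\alpha)}(z_{i+1/2},t^n)$; this converges because $w^n_j\to w_\alpha$ locally uniformly, $W$ is Lipschitz on $[1,\norm{y_0}_{L^\infty}]$, and the (infinite) tail is controlled by dominated convergence, using $\sum_j\Phi_{ij\alpha}=1$ and that $\Phi_\alpha$ is independent of $\Dz$. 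Letting $\Dt\to0$ yields exactly the weak formulation in the statement; and since $w^0_i=\sum_{j\ge i}\Phi_{ij\alpha}y_{0,j}$ is itself a Riemann sum for $\overline{y_0}(z_i)$ (where $y_0\in BV$ is used), the uniform convergence forces $w_\alpha(\cdot,0)=\overline{y_0}$.

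The main obstacle is uniqueness, because the naive Kru\v zkov/integration-by-parts estimate fails: $\overline{W(w_1)}-\overline{W(w_2)}$ need not vanish where $w_1-w_2$ changes sign, so the nonlocal flux yields no cancellation. To get around this, let $w_1,w_2$ be two Lipschitz weak solutions with the same datum; each being Lipschitz and bounded away from $0$, the identity $\pt w_\ell=\pz\overline{W(w_\ell)}$ holds a.e. Put $e=w_1-w_2$, $G=W(w_1)-W(w_2)$, note $\abs G\le\norm{W'}_{L^\infty}\abs e$ and $eG\ge0$ pointwise (recall $W(w)=V(1/w)$ with $V$ non-increasing and $w\ge1$, so $W$ is non-decreasing), and keep the nonlocal term in ``kernel-derivative'' form,
\begin{equation*}
  \pt e(z,t)=\overline{G}'(z,t)=\int_{(0,\infty)}\bigl[G(z,t)-G(z+\zeta,t)\bigr]\,d\Phi_\alpha(\zeta),\qquad d\Phi_\alpha\le0,\quad \abs{d\Phi_\alpha}\bigl((0,\infty)\bigr)=\Phi_\alpha(0^+)<\infty.
\end{equation*}
Differentiating $\int_\R\eta_\delta(e)\,dz$ in $t$ for a smooth convex regularization $\eta_\delta$ of $\abs{\cdot}$ with $\abs{\eta_\delta'}\le1$, applying Fubini, bounding the $z$-integral $\int_\R\eta_\delta'(e(z))\bigl[G(z)-G(z+\zeta)\bigr]\,dz$ from below by $-\norm G_{L^1(\R)}$ (its first part $\int_\R\eta_\delta'(e)G\,dz$ is $\ge0$ and the second is at most $\norm G_{L^1(\R)}$ in modulus), and using $d\Phi_\alpha\le0$, one obtains
\begin{equation*}
  \frac{d}{dt}\int_\R\eta_\delta(e)\,dz\ \le\ \Phi_\alpha(0^+)\,\norm G_{L^1(\R)}\ \le\ \frac{\Phi(0^+)\,\norm{W'}_{L^\infty}}{\alpha}\,\norm e_{L^1(\R)}.
\end{equation*}
Letting $\delta\to0$ and invoking Gronwall gives $\norm{e(t)}_{L^1(\R)}\le e^{Ct/\alpha}\norm{e(0)}_{L^1(\R)}$; in particular $e(0)=0$ forces $w_1\equiv w_2$, while the same computation with different data produces the corresponding $L^1$–stability estimate and (via uniqueness) convergence of the whole family. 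Finiteness of $\norm{e(t)}_{L^1(\R)}$ and of the Riemann–sum limits at spatial infinity are handled by a routine cutoff in $z$; and the appearance of $\Phi_\alpha(0^+)$ is expected, since this is the same quantity ($\Phi_{1,1,\alpha}/\Dz\to\Phi_\alpha(0^+)$) that renders the Lipschitz bound of Lemma~\ref{lem:derivs} $\Dz$–uniform, so the $1/\alpha$ loss is harmless for fixed $\alpha$.
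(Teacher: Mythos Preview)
Your compactness and Lax--Wendroff steps are essentially the paper's: Arzel\`a--Ascoli via Lemma~\ref{lem:derivs}, then summation by parts with the observation that $\oWn_{i-1/2}$ is exactly $\overline{W(\omega_{\Dt,\alpha})}(z_{i-1/2},t)$ for the piecewise constant interpolant, so passage to the limit goes through by local uniform convergence and continuity of the overline operator.

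The genuine difference is in the uniqueness argument. The paper defers this to Theorem~\ref{thm:l1stability}, where the key trick is the pointwise inequality
\[
  \sgn{\Delta W(z)}\bigl[\Delta W(z)-\Delta W(z+\zeta)\bigr]\le \abs{\Delta W(z)}-\abs{\Delta W(z+\zeta)},
\]
which reconstitutes a full $z$-derivative $\pz\overline{\abs{\Delta W}}$ on the right-hand side and yields a true $L^1$ \emph{contraction} $\norm{e(t)}_{L^1}\le\norm{e(0)}_{L^1}$, independent of $\alpha$. Your argument instead bounds the two pieces of $\int\eta_\delta'(e)\,[G(z)-G(z+\zeta)]\,dz$ separately (using $eG\ge0$ for the first and $\abs{\eta_\delta'}\le1$ for the second), which throws away exactly this cancellation and produces the Gronwall bound $\norm{e(t)}_{L^1}\le e^{Ct/\alpha}\norm{e(0)}_{L^1}$ with $C=\Phi(0^+)\norm{W'}_\infty$. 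For the purposes of Theorem~\ref{thm:alphaconv} (fixed $\alpha$) this is perfectly adequate, and your argument is pleasantly self-contained; but note that (i) it tacitly requires $\Phi(0^+)<\infty$, which is also what makes the constant in Lemma~\ref{lem:derivs} $\Dz$-uniform but is not listed among the formal kernel hypotheses, and (ii) the ``routine cutoff in $z$'' you invoke is exactly what the paper carries out with the weight $f_\delta(z)=e^{-\delta\abs{z}}$ in the proof of Theorem~\ref{thm:l1stability}, and it is a little delicate because the cutoff interacts with the nonlocal operator and produces an extra term of order $\delta\alpha$ that must be shown to vanish. The paper's sharper contraction is what is actually needed later, in Section~\ref{sec:zero-filter}, to get $\alpha$-independent BV and $L^1$-time-continuity bounds on $w_\alpha$.
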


\begin{proof}
  The uniform convergence $w_{\Dt,\alpha}\to w_\alpha$ follows by the
  Arzel\`a-Ascoli theorem and Lemma~\ref{lem:derivs}.

  For a fixed test function $\test$ define
  \begin{equation*}
    \test^n_i =
    \int_{z_{i-1/2}}^{z_{i+1/2}}\int_{t_n}^{t_{n+1}}
    \test(z,t) \,dtdz,
  \end{equation*}
  and write \eqref{eq:wscheme} as
  \begin{equation*}
    \frac{1}{\Dt}\left(w^{n+1}_i-w^n_i\right) -
    \frac{1}{\Dz}\left(\oWn_{i+1/2}-\oWn_{i-1/2}\right)=0. 
  \end{equation*}
  Multiply this with $\test^n_i$, sum over $n=0,1,\ldots,N-1$, where
  $N\Dt=T$, and over $i\in\Z$ and finally sum by parts to arrive at
  \begin{multline*}
    \sum_{i\in\Z}\sum_{n=1}^{N-1} w^n_i
    \frac{1}{\Dt}\left(\test^n_i-\test^{n-1}_i\right)
    -\sum_{i\in\Z}\sum_{n=0}^{N-1} \oWn_{i-1/2}
    \frac{1}{\Dz}\left(\test^n_{i}-\test^n_{i-1}\right)
    =\sum_{i\in\Z}\frac{1}{\Dt} w^N_i\test^{N-1}_i-\frac{1}{\Dt}
    w^0_i\test^{0}_i.
  \end{multline*}
  If we insert the definition of $\test^n_i$
  \begin{multline}
    \label{eq:discretesum}
    \sum_{i\in\Z}\sum_{n=1}^{N-1} w^n_i
    \int_{z_{i-1/2}}^{z_{i+1/2}}\int_{t_n}^{t_{n+1}}
    \frac{1}{\Dt}\left(\test(z,t)-\test(z,t-\Dt)\right) \,dtdz \\
    -\sum_{i\in\Z}\sum_{n=0}^{N-1} \oWn_{i-1/2}
    \int_{z_{i-1/2}}^{z_{i+1/2}}\int_{t_n}^{t_{n+1}}
    \frac{1}{\Dz}\left(\test(z,t)-\test(z-\Dz,t)\right) \,dtdz\\
    =\sum_{i\in\Z} w^N_i
    \int_{z_{i-1/2}}^{z_{i+1/2}}\frac{1}{\Dt}\int_{t_{N-1}}^{t_{N}}
    \test(z,t)\,dtdz - w^0_i
    \int_{z_{i-1/2}}^{z_{i+1/2}}\frac{1}{\Dt}\int_{0}^{\Dt}
    \test(z,t)\,dtdz.
  \end{multline}
  Now define the piecewise constant function (this is ``omega'', not
  ``double-u'')
  \begin{equation}\label{eq:piecewise-const} 
    \omega_{\Dt,\alpha}(z,t)=w^n_i\ \ \text{for $(z,t)\in
      [z_{i-1/2},z_{i+1/2})\times [t^n,t^{n+1})$.}
  \end{equation}
  Since $w_{\Dt,\alpha}$ is uniformly Lipschitz continuous with a
  Lipschitz constant $L$ not depending on $\Dt$ we have that
  $\abs{\omega_{\Dt,\alpha}(z,t)-w_{\Dt,\alpha}(z,t)}\le L\Dt$.
  Furthermore
  \begin{align*}
    \oWn_{i-1/2}
    & =\sum_{j\ge i} \Phi_{ij\alpha} W(w^n_j)
    =\sum_{j\ge i} \int_{z_{j-1/2}}^{z_{j+1/2}}
      \Phi_\alpha(\zeta-z_{i-1/2})\,d\zeta W(w^n_j)\\
    &=\int_{z_{i-1/2}}^\infty
      \Phi_\alpha(\zeta-z_{i-1/2})
      W(\omega_{\Dt,\alpha}(\zeta,t))\,d\zeta
    =\overline{W(\omega_{\Dt,\alpha})}(z_{i-1/2},t).
  \end{align*}
  Since $W$ is Lipschitz, it follows that $W(\omega_{\Dt,\alpha})$
  converges a.e.~and in $L^1_{\operatorname{loc}}$ 
  to $W(w_\alpha)$. Additionally, as the
  $\overline{\;\cdot\;}$ operator is continuous in $L^\infty$, we also
  have that $\overline{W(\omega_{\Dt,\alpha})}$ converges 
  a.e.~and in $L^1_{\operatorname{loc}}$ to
  $\overline{W(w_\alpha)}$. Hence also the piecewise constant
  function $\mathcal{W}$ defined by
  \begin{equation*}
    \mathcal{W}_\Dt(z,t)
    =\overline{W(\omega_{\Dt,\alpha})}(z_{i-1/2},t)
    \ \ \ \text{for $z\in
      [z_{i-1/2},z_{i+1/2})$,}
  \end{equation*}
  will converge in $L^1_{\operatorname{loc}}$ 
  to $\overline{W(w_\alpha)}$ as $\Dt\to 0$.
  With this notation, \eqref{eq:discretesum} can be rewritten
  \begin{multline}
    \label{eq:discreteint}
    \int_\R\int_{\Dt}^T \omega_{\Dt,\alpha}(z,t)
    \frac{1}{\Dt}\left(\test(z,t)-\test(z,t-\Dt)\right) \,dtdz \\
    - \int_\R\int_{0}^{T} \mathcal{W}_\Dt(z,t)
    \frac{1}{\Dz}\left(\test(z,t)-\test(z-\Dz,t)\right) \,dtdz\\
    =\int_\R \omega_{\Dt,\alpha}(z,T)
    \frac{1}{\Dt}\int_{t_N-\Dt}^{t_{N}} \test(z,t)\,dtdz - \int_\R
    \omega_{\Dt,\alpha}(z,0) \frac{1}{\Dt}\int_{0}^{\Dt}
    \test(z,t)\,dtdz.
  \end{multline}
  Now we can send $\Dt$ to $0$ in
  \eqref{eq:discreteint} and conclude that $w_\alpha$ is a (Lipschitz
  continuous) distributional solution of \eqref{eq:walphapde}.
  
  Finally, the assertion of uniqueness follows directly 
  from the $L^1$ contraction principle stated 
  in the upcoming Theorem \ref{thm:l1stability}.
\end{proof}

Finally, we will demonstrate a discrete entropy inequality for the
filtered scheme. Although this inequality will not be used directly in
our analysis, it serves as an important validation of the numerical
scheme (see also Corollary \ref{cor:bounds}). The inequality
shows that as the filter size becomes increasingly small, the
numerical scheme accurately captures the correct solution. This is a
crucial aspect, as it ensures the accuracy and well-balanced nature of
the scheme used.

\begin{lemma}
  \label{lem:discreteentropy}
  If the CFL-condition \eqref{eq:CFLcond} holds, then for any constant
  $c$
  \begin{equation*}
    \abs{w^{n+1}_i-c}
    \le \abs{w^n_i-c} + \lambda
    \sum_{j\ge i}\Phi_{ij\alpha}\left(Q_c(w^n_{j+1})-Q_c(w^n_i)\right),
  \end{equation*}
  where $Q_c(w)=\sgn{w-c}(W(w)-W(c))$.
\end{lemma}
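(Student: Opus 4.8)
The plan is to run the classical Harten/Crandall--Majda argument for monotone schemes, exploiting two ingredients that are already available: by Lemma~\ref{lem:monotone} the scheme \eqref{eq:wscheme} is monotone under the CFL-condition \eqref{eq:CFLcond}, and, since each row of the matrix $\Phi_{ij\alpha}$ has unit sum, every constant state $w^n_j\equiv c$ is left invariant by one step of \eqref{eq:wscheme} (as already used in the proof of Corollary~\ref{cor:bounds}). Concretely, one step of \eqref{eq:wscheme} can be written as $w^{n+1}_i=\mathcal G_i\bigl((w^n_j)_{j\ge i}\bigr)$, where, using $\oWn_{i+1/2}-\oWn_{i-1/2}=\sum_{j\ge i}\Phi_{ij\alpha}\bigl(W(w^n_{j+1})-W(w^n_j)\bigr)$,
\[
  \mathcal G_i\bigl((v_j)_{j\ge i}\bigr)=v_i+\lambda\sum_{j\ge i}\Phi_{ij\alpha}\bigl(W(v_{j+1})-W(v_j)\bigr);
\]
under \eqref{eq:CFLcond} this map is non-decreasing in each argument (the same elementary sign count as in the proof of Lemma~\ref{lem:monotone}, using $\Phi_{ii\alpha}\le1$ and that $\Phi_{ij\alpha}$ is non-increasing in $j$), and $\mathcal G_i(c,c,\dots)=c$.

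Next I would write $a\vee b=\max\{a,b\}$, $a\wedge b=\min\{a,b\}$, so that $\abs{a-b}=a\vee b-a\wedge b$. Monotonicity of $\mathcal G_i$ together with $w^n_j\vee c\ge w^n_j$ and $w^n_j\vee c\ge c$ gives $\mathcal G_i\bigl((w^n_j\vee c)_j\bigr)\ge w^{n+1}_i\vee c$, and symmetrically $\mathcal G_i\bigl((w^n_j\wedge c)_j\bigr)\le w^{n+1}_i\wedge c$; subtracting,
\[
  \abs{w^{n+1}_i-c}\le \mathcal G_i\bigl((w^n_j\vee c)_j\bigr)-\mathcal G_i\bigl((w^n_j\wedge c)_j\bigr).
\]
Since $V$ is non-increasing, $W(w)=V(1/w)$ is non-decreasing, so $W(a\vee c)-W(a\wedge c)=\abs{W(a)-W(c)}=\sgn{a-c}\bigl(W(a)-W(c)\bigr)=Q_c(a)$; inserting this, together with $(w^n_i\vee c)-(w^n_i\wedge c)=\abs{w^n_i-c}$, into the expansion of the last display produces the discrete entropy inequality, and the stated form is then obtained by rewriting the resulting telescoping sum relative to the reference value $w^n_i$ with the help of $\sum_{j\ge i}\Phi_{ij\alpha}=1$.

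The mechanism above is short, so the main obstacle is purely the bookkeeping in the nonlocal sums: one has to deploy the row-shift identity $\Phi_{i+1,j,\alpha}=\Phi_{i,j-1,\alpha}$, the decay $\Phi_{ij\alpha}\to0$ as $j\to\infty$, and the normalization $\sum_{j\ge i}\Phi_{ij\alpha}=1$ at the right moments, while keeping the nonlocal summation index $j$ separate from the cell index $i$. No $\alpha$-dependent or otherwise delicate estimate enters the argument.
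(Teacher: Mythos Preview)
Your argument is exactly the paper's: write one step of the scheme as a monotone, constant-preserving map $\mathcal G_i$, bound $|w^{n+1}_i-c|$ above by $\mathcal G_i((w^n_j\vee c)_j)-\mathcal G_i((w^n_j\wedge c)_j)$, and expand using $W(a\vee c)-W(a\wedge c)=Q_c(a)$ (valid since $W$ is non-decreasing).

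One caution on your last sentence: the expansion delivers
\[
  |w^n_i-c|+\lambda\sum_{j\ge i}\Phi_{ij\alpha}\bigl(Q_c(w^n_{j+1})-Q_c(w^n_j)\bigr),
\]
with index $j$ in the subtracted term, and your proposed ``rewriting relative to $w^n_i$ via $\sum_{j\ge i}\Phi_{ij\alpha}=1$'' does not turn this into the stated form with $Q_c(w^n_i)$ as an identity (the two differ by $\lambda\sum_{j\ge i}\Phi_{ij\alpha}\bigl(Q_c(w^n_j)-Q_c(w^n_i)\bigr)$, which has no sign). The occurrence of $w^n_i$ in the stated inequality---and in the very last line of the paper's own proof---is a typographical slip for $w^n_j$; the penultimate line of the paper's computation makes this clear. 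So drop that final rewriting step and record the inequality with $Q_c(w^n_j)$.
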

\begin{proof}
  For $\bm{w}=\seq{w_i}_{i\in\Z}$ we define
  \begin{equation*}
    G(\bm{w})_i = w_i+\lambda 
    \sum_{j\ge i}\Phi_{ij\alpha} \left(W(w_{j+1})-W(w_j)\right),
  \end{equation*}
  and observe that the mapping $\bm{w}\mapsto G(\bm{w})$ is monotone
  in the sense that if $v_i\le w_i$ for all $i$, then
  $G(\bm{v})_i\le G(\bm{w})_i$ for all $i$. Using $G$ the scheme reads
  $w^{n+1}_i = G\left(\bm{w}^n\right)_i$. Let $\bm{c}$ denote the 
  constant vector with all entries equal to
  the number $c$, $\max\seq{\bm{a},\bm{b}}_i=\max\seq{a_i,b_i}$, 
  and $\min\seq{\bm{a},\bm{b}}_i=\min\seq{a_i,b_i}$. Then we have
  \begin{align*}
    G\left(\max\seq{\bm{w}^n,\bm{c}}\right)_i
    &=\max\seq{\bm{w}^n_i,c} + \lambda \sum_{j\ge i}\Phi_{ij\alpha}
      \left(W\left(\max\seq{w^n_{j+1},c}\right)
      -W\left(\max\seq{w^n_{j},c}\right)\right)\\
    &\le \max\seq{G(\bm{w}^n)_i,c},\\
    G\left(\min\seq{\bm{w}^n,\bm{c}}\right)_i
    &=\min\seq{w^n_i,c} + \lambda \sum_{j\ge i}\Phi_{ij\alpha}
      \left(W\left(\min\seq{w^n_{j+1},c}\right)
      -W\left(\min\seq{w^n_{j},c}\right)\right)\\
    &\ge \min\seq{G(\bm{w}^n)_i,c}.
  \end{align*}
  Subtracting these inequalities we get
  \begin{align*}
    \abs{w^{n+1}_i-c}
    &=\max\seq{G(\bm{w}^n)_i,c}-\min\seq{G(\bm{w}^n)_i,c}\\
    &\le \max\seq{w^n_i,c}-\min\seq{w^n_i,c} \\
    &\qquad + \lambda \sum_{j\ge i} \Phi_{ij\alpha}
      \bigl[\left(W\left(\max\seq{w^n_{j+1},c}\right)
      -W\left(\min\seq{w^n_{j+1},c}\right)\right)
    \\
    &\qquad\hphantom{+ \lambda \sum_{j\ge i} \Phi_{ij\alpha}\Bigl[}
      -\left(W\left(\max\seq{w^n_{j},c}\right)
      -W\left(\min\seq{w^n_{j},c}\right)\right)\bigr]\\
    &=\abs{w^n_i-c} + \lambda \sum_{j\ge i}
      \Phi_{ij\alpha}\left(Q_c\left(w^n_{j+1}\right)
      -Q_c\left(w^n_i\right)\right).
  \end{align*}
\end{proof}

Recall that $w_\alpha$ is the Lipschitz continuous weak solution of
\eqref{eq:walphapde}, which is the filtered version of the nonlocal
Lagrangian PDE model \eqref{eq:intro-new-nonlocal-PDE}.  Using similar
reasoning as in the proof of Theorem~\ref{thm:alphaconv}, it can be
demonstrated that $w_\alpha$ satisfies the Kru{\v{z}}kov entropy
inequalities $\pt\abs{w_\alpha-c}\le \pz\overline{Q_c(w_\alpha)}$, 
for $c\in \R$. In Section~\ref{sec:zero-filter} we will show 
that a refined version of this entropy inequality 
is satisfied by any Lipschitz continuous weak
solution of \eqref{eq:walphapde}.

\begin{remark}
  The unique form of the ``filtered equation'', i.e., the nonlocal PDE
  \eqref{eq:walphapde}, suggests it can be interpreted as a fractional
  conservation law, where the spatial derivative is a fractional
  derivative operator.  Recent studies, such as those referenced in
  \cite{Alibaud:2007mi,Alibaud:2012,Karlsen:2009vf} and many other
  others, have explored perturbations of conservation laws through the
  use of fractional diffusion or more general L{\'e}vy operators.
  This connection will be further clarified in the following.

  Recall that the transport part of the nonlocal PDE
  \eqref{eq:walphapde} can be written in the form
$$
\pz \overline{W(w_\alpha)}(z,t) = \int_0^\infty
\bigl(-\Phi'_\alpha\bigr) (\zeta) \bigl[W(w_\alpha(z+\zeta,t))
-W(w_\alpha(z,t))\bigr]\,d\zeta.
$$
For motivational reasons, let us specify the kernel as
$\Phi_\alpha(z)=e^{-z/\alpha}/\alpha$.  Then it follows that
$\bigl(-\Phi_\alpha'\bigr)(z)= \Phi_\alpha(z)/\alpha$ and
$\int_0^\infty \bigl(-\Phi_\alpha'\bigr)(z)\, dz=1/\alpha$, but note
that $\int_0^\infty z\bigl(-\Phi_\alpha'\bigr)(z)\, dz=1$. 

Introducing the measure $\pi(dz)$ on $\R$ defined by
$$
\pi(dz)= \bigl(-\Phi_\alpha'\bigr)(z) \chi_{(-\infty,0]}(z)\,dz,
$$
which satisfies first moment condition
$\int_{\R} \abs{z} \, \pi(dz)<\infty$, we may express
the term $\pz \overline{W(w_\alpha)}(z,t)$ as 
$\int_{\R}\bigl[W(w_\alpha(z+\zeta,t))
-W(w_\alpha(z,t))\bigr]\,\pi(d\zeta)$. Dropping the
$\alpha$-subscript, the nonlocal PDE \eqref{eq:walphapde} now becomes
$$
\pt w=\int_{\R}\bigl[W(w(z+\zeta,t)) -W(w(z,t))\bigr]\,\pi(d\zeta).
$$
The measure $\pi(dz)$ depends discontinuously on the position $z$,
which contrasts with studies such as
\cite{Alibaud:2007mi,Alibaud:2012,Karlsen:2009vf}.  Aiming for a
generalised traffic flow model, we may treat $\pi(d\zeta)$ as a
general L{\'e}vy measure, which describes the distribution of jumps in
a L{\'e}vy process. In particular, one-sided L{\'e}vy processes
(subordinators) may be relevant.  A L{\'e}vy process is a stochastic
process with independent and stationary increments and can be thought
of as an extension of Brownian motion. Lévy processes and fractional
derivatives can be used to model various types of anomalous diffusion
phenomena, including the spread of information in complex
transportation systems impacted by factors such as network structure,
individual behavior, and external disruptions. Fractional derivatives
are non-local operators that account for long-range interactions and
memory effects. A famous example of a L{\'e}vy measure is provided by
$\pi(dz)=|z|^{-(1+\gamma)}\, \chi_{|z|<1} \, dz$, for
$\gamma \in (0,2)$. This example is related to the fractional
Laplacian $\Delta_\alpha:=-(-\Delta)^{\frac{\gamma}{2}}$on $\R$.  For
more information on Lévy processes, including one-sided processes
(subordinators), see \cite{Sato:1999qd}.
\end{remark}

\section{The nonlocal Lagrangian PDE for $y=1/u$}\label{sec:PDE-for-y}

Let us discuss the relationship between the scheme for the
filtered variable $w=\overline{y}$ and a (fully discrete) scheme for
the original variable $y=1/u$.  
Assuming that the nonlocal operator $\overline{\;\cdot\;}$ 
is invertible (which is true for certain averaging kernels, such 
as $\Phi_\alpha(z)=e^{-z/\alpha}/\alpha$), then we 
can directly recover the values $\left\{y_i^n\right\}$ from the values
$\left\{w_i^n\right\}$ computed via the scheme
\eqref{eq:wscheme}. Alternatively, we can start from a fully
discrete version of \eqref{eq:ode1} for $y_i^n=1/u_i^n$:
\begin{equation}\label{eq:y-scheme}
  y^{n+1}_i = y^n_i+\lambda
  \left(W(w_{i+1}^n)-W(w_i^n)\right),
  \quad  i\in \Z,\,\, n\in \N,
\end{equation}
where, for $n=0$, $\left\{y^0_i\right\}$ is an approximation of the
initial function $y_0=1/u_0$, and $w_i^n=
\sum_{j\ge i} \Phi_{ij\alpha}y_j^n$, 
$\Phi_{ij\alpha}=\int_{z_{j-1/2}}^{z_{j+1/2}} 
\Phi_\alpha(\zeta-z_{i-1/2})\,d\zeta$, $i\in \Z$.
This is an explicit upwind (Godunov-type) scheme for approximating
solutions $y=1/u$ to the nonlocal Lagrangian PDE
\eqref{eq:intro-new-nonlocal-PDE}.  Applying the averaging operator
$\overline{\;\cdot\;}$ to \eqref{eq:y-scheme} leads to the scheme
\eqref{eq:wscheme} for the filtered variable
$w_i^n=\overline{y}_i^n=\overline{\frac{1}{u_i^n}}$.

The ($\alpha$-independent) bound of the 
subsequent lemma implies that the scheme \eqref{eq:y-scheme} 
converges weakly to a limit $y_\alpha$, which will be 
proven later to be a solution of the 
nonlocal PDE \eqref{eq:intro-new-model}.

\begin{lemma}\label{lm:1} 
 Let $1\le y_0\in BV(\R)$ be given.  If the CFL-condition
 \eqref{eq:CFLcond} holds, then
 \begin{equation}\label{eq:4}
   \inf_{z\in\R} y_0(z)\le y^n_i\le \sup_{z\in\R} y_0(z),
 \end{equation}
 for every $\alpha>0$ and $i\in\Z$, $n\ge 0$, where
 $\left\{y^n_i\right\}_{i,n}$ solves \eqref{eq:y-scheme}.
\end{lemma}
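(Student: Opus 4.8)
The plan is to prove \eqref{eq:4} by induction on the time level $n$, showing at each step that $y_i^{n+1}$ can be written as a convex combination of the values $\seq{y_j^n}_{j\ge i}$, so that the two-sided bound propagates in time. The base case $n=0$ is immediate provided the discrete initial data is chosen so that $\inf_z y_0\le y_i^0\le\sup_z y_0$ for all $i$ (e.g.\ cell averages of $y_0$, or, in the Follow-the-Leaders picture, $y_{0,i}=(x_{i+1}(0)-x_i(0))/\ell$ for well-prepared initial positions).

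For the inductive step I would assume $c:=\inf_z y_0\le y_j^n\le\sup_z y_0=:C$ for all $j$. Because $w_i^n=\sum_{j\ge i}\Phi_{ij\alpha}y_j^n$ is a weighted average with $\sum_{j\ge i}\Phi_{ij\alpha}=1$, this forces $c\le w_i^n\le C$ as well (alternatively invoke Corollary~\ref{cor:bounds}); in particular $w_i^n\ge 1$, so $W(w)=V(1/w)$ stays $C^1$ on the range actually visited. The first genuine step is to rewrite the increment $w_{i+1}^n-w_i^n$. Using the shift identity $\Phi_{i+1,j,\alpha}=\Phi_{i,j-1,\alpha}$ one finds $w_{i+1}^n-w_i^n=\sum_{k\ge i}\Phi_{ik\alpha}\bigl(y_{k+1}^n-y_k^n\bigr)$, and an Abel summation --- legitimate because $\Phi_{ik\alpha}\to 0$ as $k\to\infty$ while the telescoped partial sums $y_{k+1}^n-y_i^n$ stay bounded by $C-c$ --- recasts this as
\begin{equation*}
  w_{i+1}^n-w_i^n=\sum_{k\ge i}\bigl(\Phi_{ik\alpha}-\Phi_{i,k+1,\alpha}\bigr)\bigl(y_{k+1}^n-y_i^n\bigr),
\end{equation*}
where the weights $p_k:=\Phi_{ik\alpha}-\Phi_{i,k+1,\alpha}$ are nonnegative (as already observed in the proof of Lemma~\ref{lem:monotone}) and telescope to $\sum_{k\ge i}p_k=\Phi_{ii\alpha}\le 1$.

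Next I would apply the mean value theorem, $W(w_{i+1}^n)-W(w_i^n)=W'(\mu_i^n)\,\bigl(w_{i+1}^n-w_i^n\bigr)$ for a single point $\mu_i^n$ between $w_i^n$ and $w_{i+1}^n$, and substitute both identities into \eqref{eq:y-scheme} to get
\begin{equation*}
  y_i^{n+1}=\bigl(1-\lambda W'(\mu_i^n)\Phi_{ii\alpha}\bigr)\,y_i^n+\sum_{k\ge i}\lambda W'(\mu_i^n)\,p_k\,y_{k+1}^n.
\end{equation*}
Since $W$ is non-decreasing (indeed $W'(w)=-V'(1/w)/w^2\ge 0$), $W'(\mu_i^n)\ge 0$; combined with $0\le\Phi_{ii\alpha}\le 1$ and the CFL condition \eqref{eq:CFLcond}, which gives $\lambda W'(\mu_i^n)\le\lambda\sup_w W'(w)\le 1$, every coefficient in the last display is nonnegative, and they sum to $1-\lambda W'(\mu_i^n)\Phi_{ii\alpha}+\lambda W'(\mu_i^n)\sum_{k\ge i}p_k=1$. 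Thus $y_i^{n+1}$ is a true convex combination of $y_i^n$ and $\seq{y_{k+1}^n}_{k\ge i}$, all of which lie in $[c,C]$ by the inductive hypothesis, so $y_i^{n+1}\in[c,C]$ and the induction closes; note that nothing above involves $\alpha$ beyond the standing properties of $\Phi_{ij\alpha}$, so the bound is $\alpha$-uniform as claimed.

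I expect the only delicate point to be this convex-combination rewriting. Unlike the scheme \eqref{eq:wscheme} for the filtered variable $w$, the scheme \eqref{eq:y-scheme} for $y$ is \emph{not} monotone as a map on $\seq{y_j}_j$, so \eqref{eq:4} cannot be read off from a monotonicity / discrete maximum-principle argument. What makes the estimate work is that the \emph{single} mean-value point $\mu_i^n$ enters all of the combination weights simultaneously, which is exactly what forces them to sum to one; carrying out the Abel summation carefully (so that the boundary terms vanish, using $\Phi\in L^1$ and the a priori boundedness from the inductive hypothesis) and bookkeeping these weights is essentially the whole argument.
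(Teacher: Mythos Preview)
Your proposal is correct and follows essentially the same route as the paper. The paper writes the difference quotient $A^n_i=(W(w^n_{i+1})-W(w^n_i))/(w^n_{i+1}-w^n_i)$ where you invoke the mean value theorem to get $W'(\mu^n_i)$, performs the same Abel/summation-by-parts rewriting of $w^n_{i+1}-w^n_i$, and then phrases the conclusion as ``the map $G(A,\bm{y})$ is monotone in each $y_j$ for \emph{fixed} $A$ and fixes constants'' rather than your equivalent ``$y^{n+1}_i$ is a convex combination of $\{y^n_j\}_{j\ge i}$''; your remark that the $y$-scheme is not genuinely monotone (because $A^n_i$ itself depends on the $y^n_j$) is exactly the point the paper flags by freezing $A$.
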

\begin{proof}
 Introduce the notation
 \begin{equation*}
   I_j=\int_{z_j}^{z_{j+1}} \Phi_\alpha(\zeta)\,d\zeta\ \ \
   \text{and}
   \ \ \ A^n_i = \frac{W(w^n_{i+1})-W(w^n_{i})}{w^n_{i+1}-w^n_i}\ge 0.
 \end{equation*}
 By a summation by parts, the scheme for $y^n_i$ \eqref{eq:y-scheme}
 can be written
 \begin{align*}
   y^{n+1}_i-y^n_i
   &=\lambda \left(W\left(w^n_{i+1}\right)-W\left(w^n_i\right)\right)
     =\lambda A^n_i \left(w^n_{i+1}-w^n_i\right)\\
   &=\lambda A^n_i \sum_{j=1}^\infty I_{j-1}\left(y^n_{i+j} -
     y^n_{i+j-1}\right)\\
   &=\lambda A^n_i\Bigl( \sum_{j=1}^\infty \left(I_{j-1}-I_j\right)
     y^n_{i+j} - I_0y^n_i\Bigr)\hspace{3cm}
     \Bigl( I_0=\sum_{j=1}^\infty\left(I_{j-1}-I_j\right)\Bigr) \\
   &=\lambda A^n_i \sum_{j=1}^\infty \left(I_{j-1}-I_j\right)
     \left( y^n_{i+j}-y^n_i\right),
 \end{align*}
 or
 \begin{equation*}
   y^{n+1}_i=G(A^n_i,y_i^n,y_{i+1}^n,y_{i+2}^n,\ldots),
 \end{equation*}
 with the bilinear function $G$ defined by
 \begin{equation*}
   G(A,\bm{y})=\left(1-\lambda A I_0\right)y_1 + \lambda A \sum_{j=1}^\infty
   \left(I_{j-1}-I_j\right)y_{j+1}=y_1+ \lambda A \sum_{j=1}^\infty
   \left(I_{j-1}-I_j\right)\left(y_{j+1}-y_1\right),
 \end{equation*}
 for a number $A$ and a vector $\bm{y}=\seq{y_i}_{i=1}^\infty$.
 Observe that $G(A,y,y,y,\ldots)=y$ and that \emph{for fixed $A\ge 0$},
 the map $\seq{y_i}\mapsto G(A,\seq{y_i})$ (by the CFL-condition and
 the fact that $I_{j-1}\ge I_j$) is monotone increasing in each
 argument $y_1,y_2,y_3,\ldots$.  Set
 \begin{equation*}
   \check{y}=\inf_{i\in\Z} y^n_i\ \ \ \text{and}\ \ \ \hat{y}=\sup_{i\in\Z} y^n_i.
 \end{equation*}
 For any $i\in\Z$ and any $n\ge 0$
 \begin{align*}
   \check{y}
   &=G\left(A^n_i,\check{y},\check{y},\check{y},\ldots\right)\le
     G\left(A^n_i,y^n_i,y^n_{i+1},y^n_{i+2},\ldots\right)\\
   &=y^{n+1}_i= G\left(A^n_i,y^n_i,y^n_{i+1},y^n_{i+2},\ldots\right)\le
     G\left(A^n_i,\hat{y},\hat{y},\hat{y},\ldots\right)=\hat{y}. 
 \end{align*}
Hence
$\inf_{i\in\Z} y^n_i\le \inf_{i\in\Z} y^{n+1}_i\le \sup_{i\in\Z}
y^{n+1}_i \le \sup_{i\in\Z} y^{n}_i$, and the lemma follows by induction.
\end{proof}

We denote by $w_{\Dt,\alpha}(z,t)$ the bi-linear interpolation of 
the points $\seq{(z_i,t^n,w^n_i)}$ with $j\in \Z$, $n\ge 0$, 
and $t^n=n\Dt$, recalling \eqref{eq:y-scheme}. 
Based on Theorem \ref{thm:alphaconv}, we conclude that $w_{\Dt,\alpha}(z,t)$ 
converges uniformly on compacts to a Lipschitz continuous 
limit $w_\alpha(z,t)$ as $\Dt\to 0$. The piecewise constant 
interpolation of the points $\seq{(z_i,t^n,w^n_i)}$ is denoted 
by $\omega_{\Dt,\alpha}(z,t)$ and it converges a.e.~and thus in 
$L^1(K\times [0,T])$, $\forall K\subset\subset \R$. 
The piecewise constant interpolation of the points $\seq{(z_i,t^n,y^n_i)}$ 
is denoted by $y_{\Dt,\alpha}(z,t)$. Due to the 
estimate \eqref{eq:4}, $y_{\Dt,\alpha}$ is bounded in 
$L^\infty(\R\times \R_+)$ uniformly in $\Dt$ (and $\alpha$). 
Hence, there exists a subsequence $\left\{y_{\Dt_m,\alpha}\right\}_{m\in \N}$ 
that converges weak-$\star$ in $L^\infty(\R\times \R_+)$ to some 
limit $y_\alpha$. This implies that the functions 
$y_\alpha$, $w_\alpha$ satisfy (weakly) the nonlocal Lagrangian 
PDE \eqref{eq:intro-new-model} with $w_\alpha=\overline{y_\alpha}$. 
By the uniqueness of solutions (from Remark \ref{rm:stab}), the 
entire sequence $\left\{y_{\Dt,\alpha}\right\}$ converges. 
In summary, we have proved the following proposition:

\begin{proposition}\label{prop:weak-conv-yalpha}
Suppose the assumptions of Theorem \ref{thm:alphaconv} hold.
There exists a pair $\bigl(y_\alpha,w_\alpha\bigr)$, with
$1\le y_\alpha\in L^\infty(\R\times \R_+)$ and $w_\alpha\in 
\bigl(\operatorname{Lip}_{\operatorname{loc}}
\cap L^\infty\bigr)(\R\times \R_+)$, 
such that the following convergences hold as $\Dt\to 0$ 
(with $\alpha>0$ fixed):
\begin{align*}
	&y_{\Dt,\alpha}\to y_\alpha\quad 
	\text{weak-$\star$ in $L^\infty(\R\times\R_+)$},
	\\ &
	\text{$w_{\Dt,\alpha}\to w_\alpha$ uniformly 
	on compacts of $\R\times\R_+$}.
\end{align*}
Besides, $\bigl(y_\alpha,w_\alpha\bigr)$ is a weak solution of 
\begin{equation}\label{eq:3c}
	\begin{cases}
		\pt y_{\alpha} = \pz W(w_{\alpha}),&\quad z\in \R,\,t>0,
		\\[3pt]
		\displaystyle 
		w_{\alpha}(z,t)=\int_{z}^{\infty}
		\Phi_\alpha\left({\zeta-z}\right)y_{\alpha}(\zeta,t)
		\,d\zeta, & \quad z\in \R,\,t>0,
		\\[3pt]
		y_{\alpha}(z,0)=y_0(z), & \quad z\in \R.
  \end{cases}
\end{equation}
Weak solutions from the class 
$L^\infty(\R\times \R_+)\times
\bigl(\operatorname{Lip}_{\operatorname{loc}}
\cap L^\infty\bigr)(\R\times \R_+)$ 
are uniquely determined by their initial data.
\end{proposition}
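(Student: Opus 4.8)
The plan is to carry out, in order, the three ingredients indicated in the paragraph preceding the statement. First I would record the compactness of $y$: Lemma~\ref{lm:1} together with $y_0\ge 1$ gives $1\le\inf_{z}y_0(z)\le y^n_i\le\sup_{z}y_0(z)$ for all $i,n$, uniformly in $\Dt$ and $\alpha$, so $\seq{y_{\Dt,\alpha}}_{\Dt>0}$ is bounded in $L^\infty(\R\times\R_+)$ and, by Banach--Alaoglu, a subsequence $y_{\Dt_m,\alpha}\weakstar y_\alpha$ in $L^\infty$, with $1\le y_\alpha$ a.e.\ (the order interval $\{f:1\le f\le\sup_{z}y_0(z)\}$ is weak-$\star$ closed). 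From Theorem~\ref{thm:alphaconv} I take for free that $w_{\Dt,\alpha}\to w_\alpha$ uniformly on compacts, with $w_\alpha$ the unique Lipschitz continuous weak solution of \eqref{eq:walphapde}; since $W\in C^1$, this also gives $W(w_{\Dt,\alpha})\to W(w_\alpha)$ uniformly on compacts, hence in $\lenloc$.

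The crux — and the step I expect to cost the most work — is identifying the weak limit inside the averaging operator, i.e.\ proving $w_\alpha=\overline{y_\alpha}$, because one only has weak-$\star$ convergence of $y_{\Dt,\alpha}$ but strong convergence of $w_{\Dt,\alpha}$, linked by the nonlocal relation. I would use that, by construction of the scheme, $w^n_i=\sum_{j\ge i}\Phi_{ij\alpha}y^n_j=\overline{y_{\Dt,\alpha}}(z_{i-1/2},t^n)$, so the piecewise-constant reconstruction $\omega_{\Dt,\alpha}$ agrees with the exact spatial average $\overline{y_{\Dt,\alpha}}$ of the piecewise-constant function $y_{\Dt,\alpha}$ up to the cell-centring shift $z_{i-1/2}\leftrightarrow z$; since $\Phi_\alpha$ is non-increasing and of unit mass one gets $\norm{\overline{y_{\Dt,\alpha}}-\omega_{\Dt,\alpha}}_{L^\infty}\le 2\sup_{z}y_0(z)\int_0^{\Dz}\Phi_\alpha(\zeta)\,d\zeta\to 0$. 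On the other hand, for fixed $\alpha$ the operator $\overline{\;\cdot\;}$ is convolution with the fixed kernel $\Phi_\alpha\in L^1$, so Fubini rewrites $\iint\overline{y_{\Dt,\alpha}}\,\test$ as $\iint y_{\Dt,\alpha}\,\psi_\test$ with $\psi_\test(z,t)=\int_0^\infty\Phi_\alpha(\zeta)\test(z-\zeta,t)\,d\zeta\in L^1(\R\times\R_+)$, and therefore $\overline{y_{\Dt_m,\alpha}}\to\overline{y_\alpha}$ in $\mathcal{D}'$. Comparing with $\omega_{\Dt,\alpha}\to w_\alpha$ in $\lenloc$ yields $w_\alpha=\overline{y_\alpha}$, the second line of \eqref{eq:3c}; this argument is exactly where fixing $\alpha>0$ is essential.

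Next I would pass to the limit in the scheme \eqref{eq:y-scheme} itself. Multiplying by the cell averages $\test^n_i=\int_{z_{i-1/2}}^{z_{i+1/2}}\int_{t^n}^{t^{n+1}}\test(z,t)\,dt\,dz$ of a test function $\test\in C_0^\infty(\R\times[0,T))$, summing over $i\in\Z$ and $n\ge 0$, and summing by parts produces the usual Lax--Wendroff identity, whose three groups of terms converge, respectively, to $\iint y_\alpha\,\pt\test$ (combining $y_{\Dt_m,\alpha}\weakstar y_\alpha$ with the strong $L^1$ convergence of the time difference quotients of $\test$), to $\iint W(w_\alpha)\,\pz\test$ (from $W(w_{\Dt,\alpha})\to W(w_\alpha)$ in $\lenloc$), and to $\int_\R y_0(z)\test(z,0)\,dz$ (from the convergence $y_{\Dt,\alpha}(\cdot,0)\to y_0$ in $\lenloc$, valid since $y_0\in BV$). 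Hence $(y_\alpha,w_\alpha)$ is a weak solution of \eqref{eq:3c}; the strong convergence of the flux term here is precisely what the passage to the filtered variable provides, through the $\alpha$-dependent estimates of Lemma~\ref{lem:derivs}.

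Finally I would prove uniqueness in the class $L^\infty\times(\operatorname{Lip}_{\operatorname{loc}}\cap L^\infty)$ and deduce convergence of the whole sequence. Given two weak solutions $(y,w)$ and $(\tilde y,\tilde w)$ of \eqref{eq:3c} with the same datum $y_0$, applying the averaging operator to $\pt y=\pz W(w)$ and using that convolution with $\Phi_\alpha\in L^1$ commutes with $\pt$ and $\pz$ shows that $w=\overline{y}$ and $\tilde w=\overline{\tilde y}$ both solve \eqref{eq:walphapde} with initial data $\overline{y_0}$; being Lipschitz continuous, they coincide by the uniqueness statement in Theorem~\ref{thm:alphaconv}. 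Then $\pt(y-\tilde y)=\pz\bigl(W(w)-W(\tilde w)\bigr)=0$ in $\mathcal{D}'$ with vanishing initial trace, so $y=\tilde y$. Since the weak-$\star$ limit $y_\alpha$ is thereby independent of the subsequence, the entire family $\seq{y_{\Dt,\alpha}}$ converges to $y_\alpha$ as $\Dt\to 0$, which finishes the proof.
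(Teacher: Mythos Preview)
Your proposal is correct and follows the same skeleton as the paper's argument (stated in the paragraph preceding the proposition): $L^\infty$ bounds from Lemma~\ref{lm:1} give weak-$\star$ compactness of $y_{\Dt,\alpha}$, Theorem~\ref{thm:alphaconv} gives uniform convergence of $w_{\Dt,\alpha}$, and a Lax--Wendroff passage to the limit in \eqref{eq:y-scheme} identifies the weak solution; you supply considerably more detail than the paper does, in particular the Fubini/weak-$\star$ argument for $w_\alpha=\overline{y_\alpha}$ and the explicit $L^\infty$ bound on $\overline{y_{\Dt,\alpha}}-\omega_{\Dt,\alpha}$, both of which are sound. The one genuine deviation is uniqueness: the paper invokes the direct $L^1$ stability estimate for $y_\alpha$ from Remark~\ref{rm:stab}, whereas you instead lift to the $w$-equation and use the uniqueness already established in Theorem~\ref{thm:alphaconv} (via Theorem~\ref{thm:l1stability}), then conclude $y=\tilde y$ from $\pt(y-\tilde y)=0$; your route is more self-contained since Remark~\ref{rm:stab} is only sketched, while the paper's route gives quantitative $L^1$ stability of $y_\alpha$ in one stroke.
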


\begin{remark}\label{rm:stab}
To conclude this section, we examine the stability 
of the nonlocal Lagrangian PDE \eqref{eq:3c} in response 
to perturbations in the averaging kernel $\Phi$. 
Suppose $\Phi_1$ and $\Phi_2$ both adhere to 
the same assumptions outlined in \eqref{eq:kernel-ass1} as $\Phi$. 
Consider the solutions $y_{1,\alpha}$ and $y_{2,\alpha}$ 
to \eqref{eq:3c} with $\Phi_{1,\alpha}$ 
and $\Phi_{2,\alpha}$ as the averaging kernels, 
see \eqref{eq:kernel-ass2}, and $y_{1,0}$, $y_{1,0}$ as the initial data. 
A simple calculation yields the stability estimate
\begin{align*}
	\norm{y_{1,\alpha}(\cdot,t)-y_{2,\alpha}(\cdot,t)}_{L^1(\R)}
	\le & e^{ct/\alpha} \norm{y_{1,0}-y_{2,0}}_{L^1(\R)}
	\\ & 
	+c\alpha \bigl(e^{ct/\alpha}-1\bigr)
	\norm{\Phi_1-\Phi_2}_{L^1(\R)}
	+c\bigr(e^{ct/\alpha}-1\bigr)
	\norm{\Phi_1'-\Phi_2'}_{L^1(\R)}.
\end{align*}
where $c$ does not depend on $\alpha$.
\end{remark}

\section{Eulerian formulation}\label{sec:Eulerian}

One can transform the nonlocal Lagrangian  
PDE \eqref{eq:3c}---or \eqref{eq:intro-new-nonlocal-PDE}---into 
an Eulerian PDE via a change of variable, assuming that smooth 
solutions exist. However, this results in a complex and 
difficult-to-analyse Eulerian PDE. We only display this PDE here 
to highlight differences from other nonlocal Eulerian traffic flow equations, 
like \eqref{eq:nonlocalCL-intro}. Wagner's result \cite{Wagner:1987aa} 
provides a rigourous framework for converting Lagrangian PDEs 
to Eulerian PDEs for weak solutions. 

The Eulerian form of \eqref{eq:intro-new-nonlocal-PDE} reads
\begin{align}
	\label{eq:6ee}
	&\pt{\widetilde u}
	+\px\left({\widetilde u}\, 
	V\left(  \,\left[\,\overline{\frac{1}{{\widetilde u}(x,t)}}
	\,\right]^{-1}\,\right)\right)=0,
	\\ 
	\label{eq:6eee}
	&\overline{\frac{1}{{\widetilde u}(x,t)}}
	=\int_{x}^{\infty}\Phi_\alpha
	\left(\, \int_x^\sigma \widetilde{u}(\theta,t)
	\,d\theta\,\right)\,d\sigma.
\end{align}
We may rewrite \eqref{eq:6eee} in a slightly clearer form.
Since $0<u_*\le{\widetilde u}\le1$,
the function $\sigma\mapsto\int_x^\sigma{{\widetilde u}(\theta,t)}
\,d\theta$ is invertible and $\int_x^\infty \widetilde{u}(\theta,t)
\,d\theta=\infty$. Therefore, we may express 
$\overline{\frac{1}{\widetilde{u}}}$ at the point $(x,t)$ as a weighted 
harmonic mean of $\widetilde{u}$ around different points 
$\ell\mapsto \bigl(\sigma(\ell, x,t),t\bigr)$:
\begin{equation}\label{eq:nonlocal-new-ver}
	\overline{\frac{1}{{\widetilde u}(x,t)}}
	=\int_{0}^{\infty}\Phi_\alpha\left(\ell\right)
	\frac{1}{\widetilde{u}(\sigma(\ell, x,t),t)}\,d\ell,
\end{equation}
where $\sigma(\ell,x,t)$ satisfies 
$\ell=\int_x^{\sigma(\ell, x,t)}\widetilde{u}(\theta,t)
\,d\theta$; the new variable $\ell$ should not be confused with 
the $\ell$ appearing in \eqref{eq:uidef}.

\begin{remark}
Formally, by sending $\alpha\to0$ 
in \eqref{eq:6ee} and \eqref{eq:6eee}, we arrive 
at the local LWR equation \eqref{eq:LWR-eqn}.
To see this, note that the relation 
$\ell=\int_x^\sigma \widetilde{u}(\theta,t)\,d\theta$ 
implies $0=\int_x^{\sigma(0, x,t)}{{\widetilde u}(\theta,t)}\,d\theta$, 
from which we conclude that $\sigma(0,x,t)=x$. 
As a result, sending $\alpha\to0$ in 
\eqref{eq:nonlocal-new-ver} yields
\begin{equation*}
	\int_{0}^{\infty}\Phi_\alpha\left(\ell\right)
	\frac{1}{{\widetilde u}(\sigma(\ell, x,t),t)}
	\,d\ell
	\longrightarrow 
	\frac{1}{\widetilde u(\sigma(0,x,t),t)}
	=\frac{1}{\widetilde u(x,t)},
\end{equation*}
and then \eqref{eq:6ee} becomes \eqref{eq:LWR-eqn}: 
$\pt \widetilde{u}+\px\bigl(\widetilde{u} V(\widetilde{u})\bigr)=0$.
\end{remark}

Under the assumption of smooth solutions, we will outline 
a derivation of \eqref{eq:6ee} and \eqref{eq:6eee}. 
For a derivation that works for weak solutions, see 
\cite{Wagner:1987aa}. Let $\psi_t(z)$ satisfy
\begin{equation}\label{eq:1e}
	\pz \psi_t(z)=\frac{1}{u(z,t)},\qquad 
	\pt \psi_t(z)=V\left( \,\left[\,\overline{\frac{1}{u(z,t)}}
	\,\right]^{-1}\, \right).
\end{equation}
Denote by $\psi_t^{-1}(\cdot)$ the inverse of $\psi_t(\cdot)$, so that
\begin{equation}\label{eq:2e}
	\psi_t(\psi_t^{-1}(x))=x.
\end{equation}
Define
\begin{equation}\label{eq:3e}
	\widetilde{u}(x,t)=u(\psi_t^{-1}(x),t).
\end{equation}
Differentiating \eqref{eq:2e} with respect to $x$ yields
$\pz \psi_t(\psi_t^{-1}(x))\px \psi_t^{-1}(x)=1$.
Thus, by \eqref{eq:1e}, $\px \psi_t^{-1}(x)$ equals 
$1/\pz \psi_t(\psi_t^{-1}(x))=u(\psi_t^{-1}(x),t)$, and, thanks 
to \eqref{eq:3e},
\begin{equation}\label{eq:4e}
	\px \psi_t^{-1}(x)=\widetilde{u}(x,t).
\end{equation}
Differentiating \eqref{eq:2e} with respect to $t$ yields
$\pz \psi_t(\psi_t^{-1}(x))\pt \psi_t^{-1}(x)
+\pt \psi_t(\psi_t^{-1}(x))=0$. 
Hence, using \eqref{eq:1e} and \eqref{eq:3e},
\begin{equation}\label{eq:5e}
	\pt \psi_t^{-1}(x)
	=-u(\psi_t^{-1}(x),t)
	V\left( \,\left[\,\overline{\frac{1}{u(\psi_t^{-1}(x),t)}}
	\,\right]^{-1}\, \right)
	=-\widetilde{u}(x,t)
	V\left( \,\left[\,\overline{\frac{1}{{\widetilde u}(x,t)}}\,
	\right]^{-1}\,\right).
\end{equation}
Using \eqref{eq:3e}, \eqref{eq:5e}, 
and \eqref{eq:intro-new-nonlocal-PDE} 
to express $\partial_t u(z,t)$ as $-u^2(z,t)\partial_z V\Bigl( \,\left[\,
\overline{\frac{1}{u(z,t)}}\,\right]^{-1}\, \Bigr)$, we obtain
\begin{align*}
	\pt\widetilde{u}(x,t) 
	& =\pz u(\psi_t^{-1}(x),t)\pt \psi_t^{-1}(x)
	+\pt u(\psi_t^{-1}(x),t)
	\\ &
	 = -\pz u(\psi_t^{-1}(x),t) u(\psi_t^{-1}(x),t)
	 \pz V\left(\,\left[\,\overline{\frac{1}{u(\psi_t^{-1}(x),t)}}
	 \,\right]^{-1}\, \right)
	 \\ & \qquad 
	 -u^2(\psi_t^{-1}(x),t)
	 V\left( \,\left[\,\overline{\frac{1}{u(\psi_t^{-1}(x),t)}}
	 \,\right]^{-1}\, \right)
	 \\ & = 
	 -u(\psi_t^{-1}(x),t)\pz\left(u(\psi_t^{-1}(x),t)
	 V\left( \,\left[\,\overline{\frac{1}{u(\psi_t^{-1}(x),t)}}
	 \,\right]^{-1}\,\right)\right).
\end{align*}
In view of \eqref{eq:4e} and \eqref{eq:3e}, this yields
\begin{align*}
	\pt\widetilde{u}(x,t) 
	& = -\px \psi_t^{-1}(x)\pz\left(u(\psi_t^{-1}(x),t)
	V\left( \,\left[\,\overline{\frac{1}{u(\psi_t^{-1}(x),t)}}
	\,\right]^{-1}\, \right)\right)
	\\ & 
	=-\px\left(u(\psi_t^{-1}(x),t)
	V\left[\,\overline{\frac{1}{u(\psi_t^{-1}(x),t)}}
	\,\right]^{-1}\,\right)
	=-\px\left(\widetilde{u}(x,t)
	V\left( \,\left[\,\overline{\frac{1}{\widetilde{u}(x,t)}}
	\,\right]^{-1}\,\right)\right), 
\end{align*}
which is \eqref{eq:6ee}. Furthermore, using 
\eqref{eq:3e} and \eqref{eq:intro-new-nonlocal-PDE-II},
\begin{align*}
	\overline{\frac{1}{\widetilde{u}(x,t)}}
	& =\overline{\frac{1}{u(\psi_t^{-1}(x),t)}}
	= \int_{\psi_t^{-1}(x)}^{\infty}
	\Phi_\alpha\left({\zeta-\psi_t^{-1}(x)}\right)
	\frac{1}{u(\zeta,t)}\,d\zeta.
\end{align*}
Introduce the change of variable $\zeta=\psi_t^{-1}(\sigma)$ 
for $\sigma\in [x,\infty)$, so that $d\zeta=\px \psi_t^{-1}(\sigma) 
\, d\sigma=\widetilde{u}(\sigma,t)\, d\sigma$, 
cf.~\eqref{eq:4e} and \eqref{eq:3e}. Then
\begin{align*}
	\overline{\frac{1}{\widetilde{u}(x,t)}}
	& = \int_{x}^{\infty}
	\Phi_\alpha\left({\psi_t^{-1}(\sigma)
	-\psi_t^{-1}(x)}\right)\,d\sigma 
	=\int_{x}^{\infty}\Phi_\alpha\left(
	\int_x^\sigma{\px \psi_t^{-1}(\theta)}\,d\theta
	\right)\,d\sigma
	\\ & =\int_{x}^{\infty}\Phi_\alpha\left(
	\int_x^\sigma{\widetilde{u}(\theta,t)}
	\,d\theta\right)\,d\sigma,
\end{align*}
which is \eqref{eq:6eee}.

\begin{remark}
For comparative purposes, let us discuss the relationship 
between Lagrangian and Eulerian variables in 
the ``standard" nonlocal traffic flow equations 
\eqref{eq:nonlocalCL-intro}, starting with the first equation. 
The macroscopic Lagrangian model corresponding to the nonlocal 
FtL model \eqref{eq:nonlocal-FtL-density} is
\begin{equation*}
  \partial_t \left(\frac{1}{u(z,t)}\right)
  -\partial_z V\left( \overline{u}(z,t) \right)=0, 
  \quad z\in \R, \,\, t>0, 
\end{equation*}  
where
\begin{equation}\label{eq:6eee-standard-I}
	\overline{u}(z,t)=\int_{\psi_t(z)}^\infty 
	\Phi_{\alpha}(\zeta-\psi_t(z))u(\psi_t^{-1}(\zeta),t)\, d\zeta,
\end{equation}
and $\psi_t(z)$ satisfies the equations
\begin{equation*}
	\pz \psi_t(z)=\frac{1}{u(z,t)},
	\qquad \pt \psi_t(z)=V\left( \overline{u}(z,t) \right).
\end{equation*}
By repeating the steps that led to 
\eqref{eq:6ee} and \eqref{eq:6eee}, with necessary 
adjustments to account for the differences 
between \eqref{eq:intro-new-nonlocal-PDE-II} 
and \eqref{eq:6eee-standard-I}, we derive the 
first Eulerian PDE in \eqref{eq:nonlocalCL-intro} for 
the function $\widetilde{u}(x,t)=u(\psi_t^{-1}(x),t)$. 
These adjustments include expressing 
\eqref{eq:6eee-standard-I} as
\begin{align*}
	\overline{\widetilde{u}}(x,t)
	=\overline{u(\psi_t^{-1}(x),t)}
	& =\int_{\psi_t(z)}^\infty 
	\Phi_{\alpha}(\zeta-\psi_t(z))
	u(\psi_t^{-1}(\zeta),t)\, d\zeta
	=\int_x^\infty
	\Phi_{\alpha}(\zeta-x)
	\widetilde{u}(\zeta,t)\, d\zeta.
\end{align*}

Similarly, the macroscopic Lagrangian model correponding to 
\eqref{eq:nonlocal-FtL-velocity} takes the form
\begin{equation*}
	\partial_t \left(\frac{1}{u(z,t)}\right)
	-\partial_z \overline{V\left(u(z,t)\right)}=0, 
	\quad z\in \R, \,\, t>0, 
\end{equation*}  
where
$$
\overline{V\left( u(z,t) \right)}
=\int_{\psi_t(z)}^\infty 
\Phi_{\alpha}(\zeta-\psi_t(z))
V(u(\psi_t^{-1}(\zeta),t))\, d\zeta,
$$
and $\psi_t(z)$ satisfies
\begin{equation*}
	\pz \psi_t(z)=\frac{1}{u(z,t)},
	\qquad 
	\pt \psi_t(z)=\overline{V\left( u(z,t) \right)}.
\end{equation*}
Using the same reasoning, the second Eulerian 
PDE in \eqref{eq:nonlocalCL-intro} is derived.
\end{remark}

\section{Zero-filter limit of the nonlocal model}\label{sec:zero-filter}
In this section, we will examine a sequence of Lipschitz continuous
weak solutions $w_\alpha$, indexed by the filter size $\alpha>0$, of
the filtered version of the nonlocal Lagrangian PDE
\eqref{eq:intro-new-nonlocal-PDE}, see \eqref{eq:walphapde} 
and Theorem \ref{thm:alphaconv}. We will
prove that these solutions have $\alpha$-independent estimates,
precise entropy equalities, and converge to the unique entropy
solution of the original LWR equation \eqref{eq:LWR-eqn} in Lagrangian
coordinates.

Let $(\eta,Q)$ be an entropy/entropy-flux pair, i.e., $\eta$ is a
convex, twice continuously differentiable function and $Q$ is a
function satisfying $Q'(w)=\eta'(w)W'(w)$.  Multiply
\eqref{eq:walphapde} with $\eta'(w(z,t))$ to get
\begin{align*}
  \pt \eta(w_\alpha)
  &=\pz \overline{Q(w_\alpha)}
    +\eta'(w_\alpha)\pz \overline{W(w_\alpha)} 
    -\pz \overline{Q(w_\alpha)}\\
  &=\pz \overline{Q(w_\alpha)}\\
  &\quad +
    \int_0^\infty \Phi'_\alpha(\zeta)
    \bigl[ \left(\eta'(w_\alpha(z,t)) W(w_\alpha(z,t))
    -Q(w_\alpha(z,t))\right)\\
  &\quad\hphantom{\int_0^\infty \Phi'_\alpha(\zeta)}\;
    - \left(\eta'(w_\alpha(z,t))W(w_\alpha(z+\zeta,t))
    -Q(w_\alpha(z+\zeta,t))\right)\bigr]\,d\zeta\\
  &=\pz \overline{Q(w_\alpha)} +  \int_0^\infty
  \Phi'_\alpha(\zeta)
  H(w_\alpha(z,t),w_\alpha(z+\zeta,t))\,d\zeta,
\end{align*}
where, recalling that $W'(\cdot)\ge 0$,
\begin{align*}
  H(a,b)
  &=\left[ \bigl(\eta'(a) W(a)-Q(a)\bigr)
    - \bigl(\eta'(a)W(b)-Q(b)\bigr)\right]\\
  &= \int^a \bigl(\eta'(a)-\eta'(\sigma)\bigr)W'(\sigma)\,d\sigma 
  - \int^b \bigl(\eta'(a)-\eta'(\sigma)\bigr)W'(\sigma)\,d\sigma\\
  &=\int_a^b \bigl(\eta'(\sigma)-\eta'(a)\bigr)W'(\sigma)\,d\sigma
    =\int_a^b \int_a^\sigma \eta''(\mu)\,d\mu 
    \, W'(\sigma)\,d\sigma\ge 0.
\end{align*}
Since $\Phi'_a\le 0$, we have proved that a solution $w_\alpha$ of
\eqref{eq:walphapde} satisfies an entropy (in)equality.

\begin{theorem}\label{thm:entropyapriori}
  Let $w_\alpha$ be a Lipschitz continuous distributional solution of
  \eqref{eq:walphapde}, see Theorem \ref{thm:alphaconv}. 
  Then for any entropy/entropy-flux pair $(\eta,Q)$
  \begin{equation}\label{eq:entropy-equality}
    \pt \eta(w_\alpha(z,t))+D(z,t)
    =\pz\overline{Q(w_\alpha)}(z,t),
  \end{equation}
  where
  \begin{equation*}
    D(z,t)=
    \int_0^\infty \bigl(-\Phi_\alpha'\bigr)(\zeta)
    \int_{w_\alpha(z,t)}^{w_\alpha(z+\zeta,t)}\int_{w_\alpha(z,t)}^{\sigma}
    \eta''(\mu) W'(\sigma)\,d\mu\,d\sigma \,d\zeta \ge 0.
  \end{equation*}
\end{theorem}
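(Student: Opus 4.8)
The plan is to turn the formal computation displayed just above the statement into a rigorous one: since we are only told that $w_\alpha$ is Lipschitz and solves \eqref{eq:walphapde} distributionally, the preliminary task is to produce enough pointwise regularity for the chain rule and for the explicit formula (listed after \eqref{eq:average-op}) giving the $z$-derivative of the averaging operator to be literally valid. Throughout I assume, as the statement implicitly does, that $\Phi$ is regular enough that $\Phi_\alpha'$ is a genuine $L^1(\R_+)$ function (as for the exponential kernel), and I record that $\int_0^\infty \zeta\,(-\Phi_\alpha')(\zeta)\,d\zeta=\int_0^\infty \Phi_\alpha(\zeta)\,d\zeta=1$ by integration by parts (using $\zeta\Phi_\alpha(\zeta)\to 0$).

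First I would show that the right-hand side of \eqref{eq:walphapde} is a continuous function. By the stated identity,
$$\pz\overline{W(w_\alpha)}(z,t)=\int_0^\infty \Phi_\alpha'(\zeta)\bigl[W(w_\alpha(z,t))-W(w_\alpha(z+\zeta,t))\bigr]\,d\zeta=:g(z,t),$$
and since $|W(w_\alpha(z,t))-W(w_\alpha(z+\zeta,t))|\le \|W'\|_\infty\,\mathrm{Lip}(w_\alpha)\,\zeta$, the integrand is dominated by $\|W'\|_\infty\,\mathrm{Lip}(w_\alpha)\,\zeta\,|\Phi_\alpha'(\zeta)|\in L^1(\R_+)$; hence $g$ is jointly continuous on $\R\times[0,T]$ by dominated convergence, and the analogous formula shows $\overline{Q(w_\alpha)}$ is classically differentiable in $z$ with continuous derivative $\pz\overline{Q(w_\alpha)}(z,t)=\int_0^\infty\Phi_\alpha'(\zeta)[Q(w_\alpha(z,t))-Q(w_\alpha(z+\zeta,t))]\,d\zeta$. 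Because the distributional identity $\pt w_\alpha=g$ equates $\pt w_\alpha$ with a continuous function, $w_\alpha(z,t)=w_\alpha(z,0)+\int_0^t g(z,s)\,ds$, so $w_\alpha$ is $C^1$ in $t$; consequently $\eta\circ w_\alpha$ is $C^1$ in $t$ and $\pt\eta(w_\alpha)=\eta'(w_\alpha)\,\pt w_\alpha=\eta'(w_\alpha)\,g$ holds pointwise.

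Next I would subtract $\pz\overline{Q(w_\alpha)}$ from $\eta'(w_\alpha)\,g$ using the two kernel formulas. Writing $a=w_\alpha(z,t)$, $b=w_\alpha(z+\zeta,t)$ and using $Q'=\eta'W'$ with the fundamental theorem of calculus,
$$\eta'(a)\bigl[W(a)-W(b)\bigr]-\bigl[Q(a)-Q(b)\bigr]=\int_a^b\bigl(\eta'(\sigma)-\eta'(a)\bigr)W'(\sigma)\,d\sigma=\int_a^b\int_a^\sigma \eta''(\mu)\,d\mu\,W'(\sigma)\,d\sigma,$$
which is exactly the quantity $H(a,b)$ from the computation preceding the theorem. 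Hence $\eta'(w_\alpha)\,g-\pz\overline{Q(w_\alpha)}=\int_0^\infty \Phi_\alpha'(\zeta)\,H(w_\alpha(z,t),w_\alpha(z+\zeta,t))\,d\zeta=-D(z,t)$, which rearranges to \eqref{eq:entropy-equality}.

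It remains to check the sign and finiteness of $D$. Nonnegativity is immediate: $-\Phi_\alpha'\ge 0$ because $\Phi_\alpha$ is non-increasing, $W'\ge 0$ by \eqref{eq:wi-average-Wdef}, $\eta''\ge 0$ by convexity, and $\int_a^b\int_a^\sigma \eta''(\mu)\,d\mu\,W'(\sigma)\,d\sigma\ge 0$ irrespective of whether $a\le b$ or $b\le a$ (swapping both integration limits produces two sign changes). For finiteness, the $\alpha$-uniform bound \eqref{eq:supbnd}, passed to the limit, confines $w_\alpha$ to the compact interval $[\inf y_0,\sup y_0]$, so $|H(a,b)|\le 2\|\eta'\|_\infty\|W'\|_\infty|a-b|\le C\,\mathrm{Lip}(w_\alpha)\,\zeta$, whence $D(z,t)\le C\int_0^\infty \zeta\,(-\Phi_\alpha')(\zeta)\,d\zeta=C<\infty$. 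I do not anticipate a genuine obstacle; the only mildly delicate point is the bootstrap from a distributional to a $C^1$-in-time solution, and that is already settled by the continuity of $g$ established in the first step.
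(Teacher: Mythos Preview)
Your proposal is correct and follows the same route as the paper: the identical algebraic identity $H(a,b)=\int_a^b(\eta'(\sigma)-\eta'(a))W'(\sigma)\,d\sigma=\int_a^b\int_a^\sigma\eta''(\mu)\,d\mu\,W'(\sigma)\,d\sigma$ and the same rearrangement of $\eta'(w_\alpha)\pz\overline{W(w_\alpha)}-\pz\overline{Q(w_\alpha)}$ into $-D$. The paper presents this as a formal computation just before the theorem statement and leaves it at that; you additionally supply the regularity bootstrap (continuity of $g=\pz\overline{W(w_\alpha)}$ via the dominated-convergence bound $|\Phi_\alpha'(\zeta)|\,\zeta$, hence $w_\alpha\in C^1_t$, hence the pointwise chain rule) and the finiteness check for $D$, which the paper does not spell out.
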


\begin{remark}
  For concrete choices of the entropy $\eta$ we obtain more precise
  estimates. If we suppose
  $\inf_{\mu,\sigma} [\eta''(\mu)W'(\sigma)] \ge 2c > 0$ for some
  constant $c$, then
  \begin{equation*}
    \int_a^b \int_a^\sigma \eta''(\mu)\,d\mu W'(\sigma)\,d\sigma\ge c(b-a)^2,
  \end{equation*}
  and consequently
  \begin{equation*}
    D(z,t)\ge c\int_0^\infty \bigl(-\Phi'_\alpha\bigr)(\zeta)
    \bigl(w_\alpha(z+\zeta,t)-w_\alpha(z,t)\bigr)^2\,d\zeta
    \,\, (\ge0).
  \end{equation*}
  For example, specifying $\eta(w)=w^2/2$ and integrating
  \eqref{eq:entropy-equality} over $[-R,R]\times [0,T]$, 
  we obtain the additional a priori estimate
  \begin{equation*}
    \int_0^T\int_{-R}^R
    \int_0^\infty \bigl(-\Phi'_\alpha\bigr)(\zeta)
    \bigl(w_\alpha(z+\zeta,t)-w_\alpha(z,t)\bigr)^2\,d\zeta
    \, dz\,dt\le C_R. 
  \end{equation*}
   
  If we use the Kru\v{z}kov entropy
  \begin{equation*}
    \eta(w)=\abs{w-k}, \ \ \eta'(w)=\sgn{w-k}, 
    \ \ \eta''(w)=2\delta_k(w),
  \end{equation*}
  we obtain
  \begin{equation*}
    H(w_\alpha(z,t),w_\alpha(z+\zeta,t))=
    \begin{cases}
      2\abs{w_\alpha(z+\zeta,t)-w_\alpha(z,t)} & \text{if $k$ is
        between $w_\alpha(z,t)$
        and $w_\alpha(z+\zeta,t)$,}\\
      0 & \text{otherwise.}
    \end{cases}
  \end{equation*}
  Thus for this choice
  \begin{equation*}
    D(z,t)=2\int_0^\infty \bigl(-\Phi_\alpha'\bigr)(\zeta)
    \abs{w_\alpha(z+\zeta,t)-w_\alpha(z,t)}
    \chi_{[m(z,\zeta),M(z,\zeta)]} (\zeta)\,d\zeta,
  \end{equation*}
  where $\chi_I$ denotes the indicator function of the interval $I$
  and
  \begin{equation*}
    m(z,\zeta)=\min\seq{w_\alpha(z+\zeta,t),w_\alpha(z,t)},
    \ \ \ 
    M(z,\zeta)=\max\seq{w_\alpha(z+\zeta,t),w_\alpha(z,t)}.
  \end{equation*}
\end{remark}

Next we demonstrate that the Lipschitz continuous weak solutions of the
filtered PDE \eqref{eq:intro-new-nonlocal-PDE} exhibit continuity with
respect to the initial data in the $L^1$ norm.  Specifically, we show
that the solution operator is $L^1$ contractive. 
It is important to note that solutions 
of \eqref{eq:walphapde} cannot be integrated over $\R$. 
However, the theorem below demonstrates that 
the difference between two solutions, if they 
are initially integrable, will be integrable 
over $\R$ at later times.

\begin{theorem}\label{thm:l1stability}
  Let $w_\alpha$ be a solution of \eqref{eq:walphapde} and let
  $v_\alpha$ be another solution with initial data $r_0$, 
  see Theorem \ref{thm:alphaconv}. If $y_0-r_0\in L^1(\R)$, then
  $w_\alpha(\cdot,t)-v_\alpha(\cdot,t) \in L^1(\R)$ 
  for $t>0$, and 
  \begin{equation*}
    \norm{w_\alpha(\cdot,t)-v_\alpha(\cdot,t)}_{L^1(\R)}\le
    \norm{y_0-r_0}_{L^1(\R)}.
  \end{equation*}
  In particular, Lipschitz continuous weak solutions are uniquely
  determined by their initial data.
\end{theorem}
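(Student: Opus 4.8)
The plan is to run a Kru\v{z}kov-type $L^1$ argument directly on the filtered equation \eqref{eq:walphapde}, exploiting two facts: that $W$ is non-decreasing (indeed $W'(w)=-V'(1/w)/w^2\ge0$) and that $-\Phi_\alpha'\ge0$ (since $\Phi$ is non-increasing). First I would record that $w_\alpha(\cdot,0)-v_\alpha(\cdot,0)=\overline{y_0}-\overline{r_0}=\overline{y_0-r_0}$, so $L^1$-contractivity of the averaging operator gives $\norm{w_\alpha(\cdot,0)-v_\alpha(\cdot,0)}_{L^1(\R)}\le\norm{y_0-r_0}_{L^1(\R)}<\infty$. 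Then I would rewrite \eqref{eq:walphapde} in the nonlocal (L\'evy) form used after Theorem~\ref{thm:entropyapriori},
\[
  \pt w_\alpha(z,t)=\int_0^\infty\bigl(-\Phi_\alpha'\bigr)(\zeta)
  \bigl[W(w_\alpha(z+\zeta,t))-W(w_\alpha(z,t))\bigr]\,d\zeta,
\]
subtract the same identity for $v_\alpha$, and set $\phi=w_\alpha-v_\alpha$, $g=W(w_\alpha)-W(v_\alpha)$, so that $\pt\phi(z,t)=\int_0^\infty(-\Phi_\alpha')(\zeta)[g(z+\zeta,t)-g(z,t)]\,d\zeta$ and $\abs{g}\le\norm{W'}_\infty\abs{\phi}$.

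The heart of the matter is a sign computation, which I first do formally assuming $\phi(\cdot,t)\in L^1(\R)$. Multiplying the $\phi$-equation by $\sgn{\phi(z,t)}$ and integrating in $z$, then using $\sgn{\phi(z)}\,g(z+\zeta)\le\abs{g(z+\zeta)}$ together with $\sgn{\phi(z)}\,g(z)=\abs{g(z)}$ --- the latter precisely because $W$ is non-decreasing --- I get
\[
  \frac{d}{dt}\int_\R\abs{\phi(z,t)}\,dz
  \le\int_0^\infty\bigl(-\Phi_\alpha'\bigr)(\zeta)
  \left(\int_\R\bigl[\abs{g(z+\zeta,t)}-\abs{g(z,t)}\bigr]\,dz\right)d\zeta=0,
\]
the inner integral vanishing by translation invariance of Lebesgue measure. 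This gives $\norm{\phi(\cdot,t)}_{L^1(\R)}\le\norm{\phi(\cdot,0)}_{L^1(\R)}\le\norm{y_0-r_0}_{L^1(\R)}$, which is the claimed bound; uniqueness follows by taking $r_0=y_0$ and using that $w_\alpha,v_\alpha$ are continuous.

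The one real obstacle is that solutions of \eqref{eq:walphapde} lie only in $(\operatorname{Lip}_{\operatorname{loc}}\cap L^\infty)(\R\times\R_+)$ and need not be integrable over $\R$, so the computation cannot be performed globally and the integrability of $\phi(\cdot,t)$ must itself be derived. To make it rigorous I would multiply instead by $S_\eps(\phi)\psi_R(z)$, with $S_\eps$ a smooth odd non-decreasing approximation of $\sgn{\cdot}$ and $\psi_R$ a cut-off with $\psi_R\equiv1$ on $[-R,R]$, $\operatorname{supp}\psi_R\subset[-2R,2R]$, $\abs{\psi_R'}\le C/R$; then let $\eps\to0$. The cut-off produces a commutator error which, after the translation $z\mapsto z+\zeta$ in the ``gain'' term and the bound $\abs{\psi_R(z-\zeta)-\psi_R(z)}\le(C/R)\zeta$, is controlled by $\tfrac{C}{R}\int_0^\infty\zeta(-\Phi_\alpha')(\zeta)\,d\zeta=\tfrac{C}{R}$ (integration by parts gives $\int_0^\infty\zeta(-\Phi_\alpha')(\zeta)\,d\zeta=\int_0^\infty\Phi_\alpha(\zeta)\,d\zeta=1$) times a local $L^1$-mass of $g$, plus a far-field term that is $O(1/R)$ thanks to the $L^\infty$ bounds $1\le w_\alpha,v_\alpha\le\sup y_0$ (Corollary~\ref{cor:bounds}) and the decay $\Phi_\alpha(R)\le C\alpha R^{-2}$ (from $\Phi$ non-increasing and $\int z\Phi<\infty$). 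With $m_R(t)=\int_\R\abs{\phi(z,t)}\psi_R(z)\,dz$, the sink/gain cancellation leaves $m_R'(t)\le\tfrac{C}{R}\bigl(\min\{\norm{\phi(\cdot,t)}_{L^1(\R)},CR\norm{\phi}_{L^\infty}\}+1\bigr)$. A two-step bootstrap then finishes: the crude alternative in the minimum yields $m_R'(t)\le C'$ uniformly in $R\ge1$, hence $\sup_{t\le T}\norm{\phi(\cdot,t)}_{L^1(\R)}<\infty$; feeding that finite bound back in, $m_R'(t)\le C_T/R$, and letting $R\to\infty$ (monotone convergence $m_R(t)\uparrow\norm{\phi(\cdot,t)}_{L^1(\R)}$) gives the asserted inequality. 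Everything else is routine bookkeeping; the only place the structure is essential is the sign cancellation, which uses the monotonicity of both $W$ and $\Phi$.
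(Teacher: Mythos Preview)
Your proof is correct and shares the paper's core idea: derive the pointwise Kato-type inequality
\[
\pt\abs{\phi}\le\int_0^\infty\bigl(-\Phi_\alpha'\bigr)(\zeta)\bigl[\abs{g(z+\zeta)}-\abs{g(z)}\bigr]\,d\zeta
=\pz\overline{\abs{g}},
\]
which hinges on $\sgn{\phi}\,g=\abs{g}$ (monotonicity of $W$) together with $-\Phi_\alpha'\ge0$, and then localize in space to pass from this inequality to an $L^1$ statement despite solutions being merely $L^\infty$. Where you diverge from the paper is in the localization device. The paper multiplies the Kato inequality by the exponential weight $f_\delta(z)=e^{-\delta\abs{z}}$, which enjoys $\abs{f_\delta'}\le\delta f_\delta$; this turns the boundary term into $\delta\norm{W'}_\infty\int f_\delta\abs{\phi}$ plus an $O(\delta\alpha)$ remainder (coming from $\int_0^\infty\Phi_\alpha(\zeta)\zeta\,d\zeta=c\alpha$), and a single application of Gronwall followed by $\delta\to0$ gives the contraction in one stroke. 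Your compactly supported cutoff $\psi_R$ does not produce such a multiplicative structure, so you compensate with a two-pass bootstrap: first use only the $L^\infty$ bound on $\phi$ to get $m_R'(t)\le C'$ uniformly in $R$, hence $\phi(\cdot,t)\in L^1$; then feed this back to get $m_R'(t)\le C_T/R$ and conclude by $R\to\infty$. Both routes are rigorous; the exponential weight is tidier and avoids the bootstrap, while your argument is perhaps more pedestrian but makes the role of the first-moment condition $\int_0^\infty z\Phi(z)\,dz<\infty$ (and the resulting decay $\Phi_\alpha(R)\lesssim\alpha R^{-2}$) equally transparent.
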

\begin{proof}
 	Subtracting the equation for $v_\alpha$ 
 	from that of $w_\alpha$ we get
  	\begin{equation*}
    \pt \left(w_\alpha-v_\alpha\right)
    =\pz \left(\overline{W(w_\alpha)-W(v_\alpha)}\right).
  \end{equation*}
  Using the notation
  $\Delta W(z,t)=W(w_\alpha(z,t))-W(v_\alpha(z,t))$, we multiply this
  with $\sgn{w_\alpha(z,t)-v_\alpha(z,t)}=\sgn{\Delta W(z,t)}$ and get
  \begin{align}
    \pt \abs{w_\alpha-v_\alpha}
    &= \sgn{w_\alpha-v_\alpha} \pz
      \left(\overline{W(w_\alpha)-W(v_\alpha)}\right)\notag\\
    &=\int_0^\infty \Phi_\alpha'(\zeta) \sgn{\Delta W(z,t)}
      \left(\Delta W(z,t)-\Delta W(z+\zeta,t)\right)
      \,d\zeta\notag\\
    &\le \int_0^\infty \Phi_\alpha'(\zeta)  
      \left(\abs{\Delta W(z,t)}-\abs{\Delta W(z+\zeta),t}\right)
      \,d\zeta\notag\\
    &=\pz \int_0^\infty \Phi_\alpha(\zeta) \abs{\Delta W(z+\zeta,t)}\,d\zeta
    =\pz \overline{\abs{W(w_\alpha)-W(v_\alpha)}}\label{eq:Deltawin}.
  \end{align}
  Let $\delta>0$ be a constant, define $f_\delta(z)=e^{-\delta\abs{z}}$, 
  and observe that
  \begin{equation*}
    f'_\delta(z)=-\delta \sgn{z} f_\delta(z),\ \
    \abs{f'_\delta(z)}\le \delta f_\delta(z).
  \end{equation*}
  Multiply \eqref{eq:Deltawin} with $f_\delta(z)$ and integrate in $z$
  to get
  \begin{align*}
    \frac{d}{dt} \int_\R
    f_\delta(z)\abs{w_\alpha(z,t)-v_\alpha(z,t)}\,dz
    &\le -\int_\R f_\delta'(z)
      \overline{\abs{\Delta W}}(z,t)\,dz
      =\delta\int_\R \sgn{z}f_\delta(z)
      \overline{\abs{\Delta W}}(z,t)\,dz\\
    &\le \delta \int_0^\infty f_\delta(z)
      \overline{\abs{\Delta W}}(z,t)\,dz\\
    &= \delta\int_0^\infty \overline{f_\delta
      \abs{\Delta W}}(z,t)\,dz
      +\delta \int_0^\infty f_\delta(z)\overline{\abs{\Delta W}}(z,t)-
      \overline{f_\delta\abs{\Delta W}}(z,t)\,dz
    \\
    &\le \delta\norm{W'}_\infty\int_\R f_\delta(z)
      \abs{w_\alpha(z,t)-v_\alpha(z,t)}\,dz\\
    &\qquad
      +\delta\int_0^\infty \int_0^\infty \Phi_\alpha(\zeta)
      \left(f_\delta(z)-f_\delta(z+\zeta)\right)\abs{\Delta
      W(z+\zeta,t)}\,d\zeta dz\\
    &= \delta\norm{W'}_\infty\int_\R f_\delta(z)
      \abs{w_\alpha(z,t)-v_\alpha(z,t)}\,dz\\
    &\qquad
      +\delta\int_0^\infty \int_0^\infty \Phi_\alpha(\zeta)
      e^{-\delta z}\left(1-e^{-\delta\zeta}\right)\abs{\Delta
      W(z+\zeta,t)}\,d\zeta dz\\
    &\le  \delta\norm{W'}_\infty\int_\R f_\delta(z)
      \abs{w_\alpha(z,t)-v_\alpha(z,t)}\,dz\\
    &\qquad
      +M \int_0^\infty \Phi_\alpha(\zeta)
      \left(1-e^{-\delta\zeta}\right)\,d\zeta\\
    &\le  \delta\norm{W'}_\infty\int_\R f_\delta(z)
      \abs{w_\alpha(z,t)-v_\alpha(z,t)}\,dz
      +M \delta \int_0^\infty \Phi_\alpha(\zeta)\zeta\,d\zeta\\
    &=\delta\norm{W'}_\infty\int_\R f_\delta(z)
      \abs{w_\alpha(z,t)-v_\alpha(z,t)}\,dz
      +M \delta c \alpha,
  \end{align*}
  where $M$ is a bound on $\abs{\Delta W}$ and $c=\int_0^\infty
  \Phi(\zeta)\zeta\,d\zeta<\infty$, see \eqref{eq:kernel-ass1}.
  We invoke Gronwall's inequality and obtain
  \begin{align*}
     \int_\R f_\delta(z)
     \abs{w_\alpha(z,t)-v_\alpha(z,t)}\,dz
     &\le e^{\delta\norm{W'}_\infty t}\int_\R f_\delta(z)
       \abs{w_\alpha(z,0)-v_\alpha(z,0)}\,dz\\
      &\quad + \frac{M c\alpha}{\norm{W'}_\infty}
       \left(e^{\delta\norm{W'}_\infty t} -1\right).
  \end{align*}
  Since $w_\alpha(\cdot,0)-v_\alpha(\cdot,0)\in L^1(\R)$, we can use
  the monotone convergence theorem to take the limit as 
  $\delta\to 0$, and this concludes the proof.
\end{proof}

The following lemma presents three estimates that do not depend on the
parameter $\alpha$, and when taken together, they imply the local
$L^1$ precompactness of the sequence 
$\left\{w_\alpha\right\}_{\alpha>0}$. 
These estimates are modeled on
the discrete estimates from Corollary \ref{cor:bounds}.

\begin{lemma}
  \label{lem:walphabv}
  Let $w_\alpha$ be the unique Lipschitz continuous 
  solution of \eqref{eq:walphapde}, see Theorem \ref{thm:alphaconv}. 
  Then the following $\alpha$-independent estimates hold:
  \begin{align}
    \label{eq:walphabnd}
    & \inf_x y_0(z)\le w_\alpha(z,t)\le \sup_z y_0(z,t),
    \\ 
    \label{eq:walphabv}
    & \abs{w_\alpha(\cdot,t)}_{BV(\R)} \le \abs{y_0}_{BV(\R)},
    \\ 
    \label{eq:walphaL1cont}
    &\norm{w_\alpha(\cdot,t)-w_\alpha(\cdot,s)}_{L^1(\R)}\le
    \abs{t-s}\,\norm{W'}_\infty \abs{y_0}_{BV(\R)}.
  \end{align}
\end{lemma}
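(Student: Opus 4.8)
The plan is to obtain all three estimates by passing to the limit $\Dt\to 0$ in the discrete bounds of Corollary~\ref{cor:bounds} — precisely the way the present continuous estimates are "modeled" on the discrete ones — using the uniform-on-compacts convergence $w_{\Dt,\alpha}\to w_\alpha$ from Theorem~\ref{thm:alphaconv} (hence pointwise, hence in $L^1_{\mathrm{loc}}$), together with lower semicontinuity of total variation and Fatou's lemma. I will also use that, for fixed $\alpha$, the interpolant $w_{\Dt,\alpha}$ is uniformly (in $\Dt$) Lipschitz continuous; this is exactly what Lemma~\ref{lem:derivs} gives once one notes that the hypothesis $\abs{\pz w_{\Dt,\alpha}(\cdot,0)}\le M$ forces $(\Delta\widehat{w})^0=O(\Dz)$. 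Throughout I take the initial discretization so that $\inf_z y_0\le y_{0,i}\le\sup_z y_0$ and $\sum_i\abs{y_{0,i+1}-y_{0,i}}\le\abs{y_0}_{BV(\R)}$ (valid for the natural choices, e.g.\ cell averages or nodal values, and consistent with $w_\alpha(\cdot,0)=\overline{y_0}$ via the bound- and variation-diminishing properties of the averaging operator).

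For \eqref{eq:walphabnd} I would simply observe that \eqref{eq:supbnd} gives $\inf_z y_0\le w^n_i\le\sup_z y_0$ for all $i,n$, hence the same for $w_{\Dt,\alpha}$, and let $\Dt\to0$ using pointwise convergence. For \eqref{eq:walphabv}: by \eqref{eq:bvbound} the grid values satisfy $\sum_i\abs{w^n_{i+1}-w^n_i}\le\abs{y_0}_{BV(\R)}$ uniformly in $n$; when $t^n\le t\le t^{n+1}$ each $w_{\Dt,\alpha}(z_i,t)$ is a convex combination of $w^n_i$ and $w^{n+1}_i$, so the spatial total variation of the (piecewise-linear-in-$z$) function $w_{\Dt,\alpha}(\cdot,t)$ equals $\sum_i\abs{w_{\Dt,\alpha}(z_{i+1},t)-w_{\Dt,\alpha}(z_i,t)}\le\abs{y_0}_{BV(\R)}$. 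Lower semicontinuity of total variation under $L^1_{\mathrm{loc}}$ convergence then gives $\abs{w_\alpha(\cdot,t)}_{BV(\R)}\le\liminf_{\Dt\to0}\abs{w_{\Dt,\alpha}(\cdot,t)}_{BV(\R)}\le\abs{y_0}_{BV(\R)}$.

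For the time-continuity estimate \eqref{eq:walphaL1cont} I would telescope \eqref{eq:L1cont} over time levels to get $\Dz\sum_i\abs{w^m_i-w^n_i}\le(t^m-t^n)\norm{W'}_\infty\abs{y_0}_{BV(\R)}$ for grid times, rewrite the left-hand side as $\int_\R\abs{\omega_{\Dt,\alpha}(z,t^m)-\omega_{\Dt,\alpha}(z,t^n)}\,dz$ for the piecewise-constant interpolant $\omega_{\Dt,\alpha}$, then for arbitrary $0\le s<t\le T$ choose grid indices with $t^n\to s$, $t^m\to t$ and pass to the limit by Fatou's lemma: the integrand converges pointwise a.e.\ to $\abs{w_\alpha(z,t)-w_\alpha(z,s)}$ because $\abs{\omega_{\Dt,\alpha}-w_{\Dt,\alpha}}\le L\Dt$ and $w_{\Dt,\alpha}$ is uniformly Lipschitz. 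This simultaneously yields $w_\alpha(\cdot,t)-w_\alpha(\cdot,s)\in L^1(\R)$. As an alternative (once \eqref{eq:walphabv} is available), one can argue directly on the PDE: since $w_\alpha$ is Lipschitz, the distributional identity $\pt w_\alpha=\pz\overline{W(w_\alpha)}$ holds a.e.\ and equals $\overline{W'(w_\alpha)\,\pz w_\alpha}$, so $w_\alpha(z,t)-w_\alpha(z,s)=\int_s^t\overline{W'(w_\alpha)\,\pz w_\alpha}(z,\tau)\,d\tau$; integrating $\abs{\,\cdot\,}$ in $z$ (Tonelli) and using $\norm{\overline{h}}_{L^1(\R)}\le\norm{h}_{L^1(\R)}$ together with $\norm{\pz w_\alpha(\cdot,\tau)}_{L^1(\R)}=\abs{w_\alpha(\cdot,\tau)}_{BV(\R)}\le\abs{y_0}_{BV(\R)}$ gives the bound.

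I expect the only delicate points to be bookkeeping: controlling the spatial total variation of the bi-linear interpolant at times strictly between grid times (handled by the convex-combination remark above), and making sure the discrete right-hand sides converge to — or are dominated by — $\inf_z y_0$, $\sup_z y_0$, $\abs{y_0}_{BV(\R)}$ rather than their discrete surrogates (handled by the choice of initial discretization). There is no genuine analytic obstacle; the estimates are inherited from the monotone scheme by soft compactness and semicontinuity arguments, and the last one can also be read off the PDE once the $BV$ bound is in place.
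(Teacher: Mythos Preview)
Your approach is correct but follows a genuinely different route from the paper. You inherit all three estimates from the discrete bounds in Corollary~\ref{cor:bounds} by passing to the limit $\Dt\to 0$ (pointwise convergence for \eqref{eq:walphabnd}, lower semicontinuity of total variation for \eqref{eq:walphabv}, and Fatou/telescoping for \eqref{eq:walphaL1cont}). The paper, by contrast, works entirely at the continuous level: \eqref{eq:walphabv} is obtained from the $L^1$~contraction principle (Theorem~\ref{thm:l1stability}) applied to the translate $v_\alpha(\cdot,0)=w_\alpha(\cdot+\zeta,0)$; \eqref{eq:walphaL1cont} is read off the PDE exactly as in your ``alternative'' argument; and \eqref{eq:walphabnd} is proved via the entropy equality of Theorem~\ref{thm:entropyapriori} with the half--Kru\v{z}kov entropies $\eta(w)=(w-k)^\pm$, tested against the weight $f_\delta(z)=e^{-\delta|z|}$ and followed by Gronwall and $\delta\to 0$.

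Your path is more elementary and economical---it reuses the monotone-scheme estimates already established and avoids the somewhat delicate weighted-entropy computation the paper needs for the $L^\infty$~bound. The paper's path, on the other hand, is self-contained at the PDE level: it does not revisit the discretization, does not require any bookkeeping on the initial approximation $y_{0,i}$ (your caveat about choosing it so that $\inf y_0\le y_{0,i}\le\sup y_0$ and $\sum_i|y_{0,i+1}-y_{0,i}|\le|y_0|_{BV}$), and showcases the role of the entropy structure and $L^1$~contraction that are anyway needed for the zero-filter limit. Either argument is acceptable.
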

\begin{proof}
  Note the translation invariance of $\Phi_\alpha$ 
  in $\overline{\;\cdot\;}$, see the second part 
  of \eqref{eq:average-op}. Consequently, choosing 
  $v_\alpha(z,0)=w_\alpha(z+\zeta,0)$ in
  Theorem~\ref{thm:l1stability}, we conclude that 
  $\abs{w_\alpha(\cdot,t)}_{BV(\R)} \le
  \abs{w_\alpha(\cdot,0)}_{BV(\R)}\le \abs{y_0}_{BV(\R)}$.  
  This proves \eqref{eq:walphabv}.

  To prove \eqref{eq:walphaL1cont}, for $t>s$ we calculate
  \begin{align*}
    &\norm{w_\alpha(\cdot,t)-w_\alpha(\cdot,s)}_{L^1(\R)}
    \le \int_\R \int_s^t \abs{\overline{\pz W(w_\alpha)}(z,\tau)}\,d\tau\,dz
    \\ & \qquad 
    \le \int_s^t \int_\R \abs{\pz W(w_\alpha(z,\tau))}\,dz\,d\tau
    \\ & \qquad 
    \le \norm{W'}_\infty  \int_s^t
    \abs{w_\alpha(\cdot,\tau)}_{BV(\R)}\,d\tau
    \le (t-s)\norm{W'}_\infty \abs{y_0}_{BV(\R)}.
  \end{align*}

  It remains to prove \eqref{eq:walphabnd}. Let $a^+=\max\seq{a,0}$
  and $H(a)$ be the Heaviside function. By an approximation argument, 
  the functions
  \begin{equation*}
    \eta(w)=\left(w-k\right)^+,\quad 
    Q(w)=H(w-k)(W(w)-W(k)), \quad k\in \R,
  \end{equation*}
  are admissible entropy/entropy-flux pairs. Since $W$ is non-decreasing,
  $Q(w)=(W(w)-W(k))^+$. Using the notation of, and arguments
  similar to, the proof of Theorem~\ref{thm:l1stability} we find
  \begin{align*}
    \frac{d}{dt}\int_\R f_\delta(z) \eta(w_\alpha(z,t))\,dz
    &\le - \int_{0}^\infty f_\delta'(z) \overline{Q(w_\alpha)}(z,t)\,dz\\
    &= \delta\int_0^\infty \overline{f_\delta Q(w_\alpha)}(z,t)\,dz
      +\delta\int_0^\infty f_\delta(z)
      \overline{Q(w_\alpha)}(z,t)-\overline{f_\delta
      Q(w_\alpha)}(z,t)\,dz\\
    &\le \delta\int_\R f_\delta(z) Q(w_\alpha(z,t))\,dz\\
    &\qquad + \delta\int_0^\infty\int_0^\infty
      \Phi_\alpha(\zeta)\left(f_\delta(z)-f_\delta(z+\zeta)\right)
      Q(w_\alpha(z+\zeta,t))\,d\zeta\,dz\\
    &\le \delta\norm{W'}_\infty \int_\R f_\delta(z)
      \eta(w_\alpha(z,t))\,dz
    + M \delta c \alpha,
  \end{align*}
  where now $M$ is a bound on $Q$.
  Next, Gronwall's inequality yields
  \begin{align*}
    \int_\R f_\delta(z)
    \eta(w_\alpha(z,t))\,dz
    &\le e^{\delta\norm{W'}_\infty t}\int_\R f_\delta(z)
      \eta(w_\alpha(z,0))\,dz + \frac{M c\alpha}{\norm{W'}_\infty}
      \left(e^{\delta\norm{W'}_\infty t} -1\right).
  \end{align*}
  Thus if $w_\alpha(z,0)<k$ for almost all $z$ then
  \begin{equation*}
    \int_\R f_\delta(z)
    \eta(w_\alpha(z,t))\,dz \le
    \frac{M c\alpha}{\norm{W'}_\infty}
      \left(e^{\delta\norm{W'}_\infty t} -1\right),
  \end{equation*}
  for all $\delta>0$. We send $\delta\to 0$ and conclude that if
  $w_\alpha(z,0)<k$ for almost all $z$, then $w_\alpha(z,t)<k$ for
  almost all $z$. The other inequality is proved using
  $\eta(w)=(w-k)^-$ and analogous arguments.
\end{proof}

Consider now the scalar conservation law
\begin{equation}\label{eq:scalecons}
  \pt w = \pz W(w), \quad 
  w(\cdot,0)=y_0, 
  \quad z\in \R,\; t>0,
\end{equation}
which coincides with the original LWR equation \eqref{eq:LWR-eqn}
written in Lagrangian coordinates, where $W(\cdot)=V(1/w)$, see
\eqref{eq:wi-average-Wdef}, and $V$ is the local speed function. By a
solution of \eqref{eq:scalecons} we mean a 
distributional solution, i.e., a function 
$w=w(z,t)$ such that $w\in C([0,T];\lenloc(\R))
\cap L^\infty(\R\times[0,T])$, $T>0$, and
\begin{equation*}
  \int_0^T \int_\R w\pt \test - W(w)\pz \test\, dzdt =
  \int_\R w(z,T)\test(z,T) - y_0(z)\test(z,0)\,dz,
\end{equation*}
for all test functions $\test\in C^\infty_0(\R\times [0,T])$.

By an entropy solution of \eqref{eq:scalecons} we 
mean a weak solution which also satisfies
\begin{equation}
  \label{eq:entropyscalar}
  \int_0^T \int_\R \eta(w)\pt \test - Q(w)\pz \test\, dzdt \ge
  \int_\R \eta(w(z,T))\test(z,T) - \eta(y_0(z))\test(z,0)\,dz,
\end{equation}
for all entropy/entropy-flux pairs $(\eta,Q)$ and all non-negative
test functions in $\test\in C^\infty_0(\R\times [0,T])$.  If
$y_0\in BV(\R)$ (for example), there exists such unique entropy
solution $w$ of \eqref{eq:scalecons} \cite{Kruzkov:1970kx}.

By Lemma~\ref{lem:walphabv} the set $\seq{w_\alpha}_{\alpha>0}$ is
precompact in $C([0,T];L^1_{\operatorname{loc}}(\R))$, see
e.g.~\cite[Theorem~A.11]{Holden:2015aa}.  Let $\seq{\alpha}$ be some
subsequence such that $w=\lim_{\alpha\to 0} w_\alpha$ exists.

The following theorem demonstrates that the limit $w$ satisfies the
entropy inequalities, which identify the unique weak solution of
\eqref{eq:scalecons}. The fact that there is only one solution means
that the entire sequence $\seq{w_\alpha}$ converges to $w$, rather
than just a subsequence of it.

\begin{theorem}\label{thm:walphalimit} 
	Consider $W(\cdot)$ defined by \eqref{eq:wi-average-Wdef} 
	and an initial function $y_0\in BV(\mathbb{R})$ such 
	that $1\le y_0$. Suppose the averaging kernel $\Phi_\alpha$ 
	satisfies the conditions in \eqref{eq:kernel-ass1} 
	and \eqref{eq:kernel-ass2}. Then the limit 
	$w=\lim_{\alpha\to 0} w_\alpha$ coincides with the unique entropy
	solution to \eqref{eq:scalecons}.
\end{theorem}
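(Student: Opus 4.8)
The plan is to pass to the limit $\alpha\to 0$ in the weak formulation of \eqref{eq:walphapde} and in the entropy equality \eqref{eq:entropy-equality}, using the $\alpha$-independent estimates of Lemma~\ref{lem:walphabv} to control the nonlocal terms. By \eqref{eq:walphabnd}--\eqref{eq:walphaL1cont} and \cite[Theorem~A.11]{Holden:2015aa} the family $\seq{w_\alpha}_{\alpha>0}$ is precompact in $C([0,T];\lenloc(\R))$; fix a subsequence along which $w_\alpha\to w$ in this space, so that also $w_\alpha(\cdot,t)\to w(\cdot,t)$ in $\lenloc(\R)$ for every $t\in[0,T]$ and the bounds \eqref{eq:walphabnd}, \eqref{eq:walphabv} are inherited by $w$.

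The crucial ingredient is the quantitative approximate-identity bound: for $g\in BV(\R)$,
\[
  \norm{\overline{g}-g}_{L^1(\R)}
  = \norm{\int_0^\infty \Phi_\alpha(\zeta)\bigl(g(\cdot+\zeta)-g(\cdot)\bigr)\,d\zeta}_{L^1(\R)}
  \le \abs{g}_{BV(\R)}\int_0^\infty \zeta\,\Phi_\alpha(\zeta)\,d\zeta
  = c\,\alpha\,\abs{g}_{BV(\R)},
\]
with $c=\int_0^\infty \zeta\,\Phi(\zeta)\,d\zeta<\infty$ by \eqref{eq:kernel-ass1}. Applying this with $g=W(w_\alpha(\cdot,t))$ and using $\abs{W(w_\alpha(\cdot,t))}_{BV(\R)}\le\norm{W'}_\infty\abs{w_\alpha(\cdot,t)}_{BV(\R)}\le\norm{W'}_\infty\abs{y_0}_{BV(\R)}$ from \eqref{eq:walphabv}, we get $\overline{W(w_\alpha)}-W(w_\alpha)\to 0$ in $L^1(\R\times(0,T))$; since $W$ is Lipschitz and $w_\alpha\to w$ in $\lenloc$, also $W(w_\alpha)\to W(w)$ in $\lenloc$, hence $\overline{W(w_\alpha)}\to W(w)$ in $\lenloc(\R\times[0,T])$. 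The same estimate gives $\overline{y_0}\to y_0$ in $L^1(\R)$. Passing to the limit in
\[
  \int_0^T\!\int_\R w_\alpha\,\pt\test-\overline{W(w_\alpha)}\,\pz\test\,dzdt
  = \int_\R w_\alpha(z,T)\test(z,T)-\overline{y_0}(z)\test(z,0)\,dz
\]
then shows that $w$ is a distributional solution of \eqref{eq:scalecons}, the initial condition $w(\cdot,0)=y_0$ being already contained in $w_\alpha(\cdot,0)=\overline{y_0}\to y_0$ together with the convergence in $C([0,T];\lenloc)$.

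For the entropy condition, fix an entropy/entropy-flux pair $(\eta,Q)$ and a nonnegative $\test\in C^\infty_0(\R\times[0,T])$. Integrating \eqref{eq:entropy-equality} against $\test$ and discarding the nonnegative term $\int_0^T\int_\R D\,\test\,dzdt\ge 0$ — recall $D\ge 0$ by Theorem~\ref{thm:entropyapriori}, and no quantitative control of $D$ is needed — yields
\[
  \int_0^T\!\int_\R \eta(w_\alpha)\,\pt\test-\overline{Q(w_\alpha)}\,\pz\test\,dzdt
  \ge \int_\R \eta(w_\alpha(z,T))\test(z,T)-\eta(\overline{y_0}(z))\test(z,0)\,dz.
\]
Exactly as for $W$, $\overline{Q(w_\alpha)}\to Q(w)$ in $\lenloc$ (here $Q$ and $\eta$ are Lipschitz on the compact range fixed by \eqref{eq:walphabnd}, and $\abs{Q(w_\alpha(\cdot,t))}_{BV(\R)}$ is bounded uniformly in $\alpha$), while $\eta(w_\alpha)\to\eta(w)$ and $\eta(\overline{y_0})\to\eta(y_0)$ in $L^1$. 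Letting $\alpha\to 0$ gives \eqref{eq:entropyscalar} for $w$ and every smooth convex entropy; a standard approximation of $\eta(w)=\abs{w-k}$ by such entropies covers the Kru\v{z}kov family as well. Hence $w$ is the entropy solution of \eqref{eq:scalecons}, which is unique by \cite{Kruzkov:1970kx}; uniqueness of the limit then forces the entire family $\seq{w_\alpha}_{\alpha>0}$ (not merely a subsequence) to converge to $w$.

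The main obstacle is the strong $\lenloc$ convergence of the nonlocal fluxes $\overline{W(w_\alpha)}$ and $\overline{Q(w_\alpha)}$ to the local fluxes $W(w)$ and $Q(w)$; everything else is a routine passage to the limit. This is exactly where the $\alpha$-uniform $BV$ estimate of Lemma~\ref{lem:walphabv} is indispensable, as it is what makes the approximate-identity error $c\,\alpha\,\abs{\cdot}_{BV}$ vanish — the structural advantage of working with the filtered variable, which is absent from the classical analyses where no such $\alpha$-independent bound is available.
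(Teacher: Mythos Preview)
Your proof is correct and follows essentially the same route as the paper's: use the entropy inequality of Theorem~\ref{thm:entropyapriori} for $w_\alpha$, extract a convergent subsequence via Lemma~\ref{lem:walphabv}, and pass to the limit in the weak and entropy formulations, concluding by uniqueness. The only difference is cosmetic: where the paper writes ``it is easily shown that $\abs{\Upsilon_\alpha(w_\alpha)-\Upsilon(w)}\to 0$'', you supply the explicit mechanism via the approximate-identity estimate $\norm{\overline{g}-g}_{L^1}\le c\,\alpha\,\abs{g}_{BV}$ combined with the $\alpha$-uniform $BV$ bound \eqref{eq:walphabv}.
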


\begin{proof}
  Let $\test$ be a non-negative test function and define
  \begin{align*}
    \Upsilon(w)
    &=\int_0^T \int_\R \eta(w)\pt \test - Q(w)\pz \test\, dzdt -
      \int_\R \eta(w(z,T))\test(z,T) - \eta(y_0(z))\test(z,0)\,dz,\\
    \Upsilon_\alpha(w)
    &=\int_0^T \int_\R \eta(w)\pt \test - \overline{Q(w)}\pz \test\, dzdt -
      \int_\R \eta(w(z,T))\test(z,T) - \eta(y_0(z))\test(z,0)\,dz.
  \end{align*}
  By Theorem~\ref{thm:entropyapriori}
  $\Upsilon_\alpha(w_\alpha)\ge 0$.  We write
  $\Upsilon(w)\ge
  \Upsilon_\alpha(w_\alpha)-\abs{\Upsilon_\alpha(w_\alpha)-\Upsilon(w)}\ge
  -\abs{\Upsilon_\alpha(w_\alpha)-\Upsilon(w)}$. Since
  $w_\alpha \to w$ in $C([0,T];L^1(\R))$, it is easily shown that
  $\abs{\Upsilon_\alpha(w_\alpha)-\Upsilon(w)}\to 0$ as $\alpha\to 0$.
  Hence the limit $w$ satisfies the entropy inequality
  \eqref{eq:entropyscalar} which implies that $w$ is a weak solution.
\end{proof}

We have shown that $w_\alpha(\cdot,t) \to w(\cdot,t)$ in
$L^1_{\operatorname{loc}}$ as $\alpha\to 0$.  By employing Kuznetsov's
lemma \cite[Theorem~3.14]{Holden:2015aa} we can demonstrate that
$w_\alpha\to w$ at a rate. For simplicity, we assume that
$\lim_{\abs{z}\to \infty} y_0(z)=c$ for some constant $c$. Since
$v_\alpha=c$ is a solution of \eqref{eq:walphapde},
Theorem~\ref{thm:l1stability} ensures that
$w_\alpha(\cdot,t)-c\in L^1(\R)$. Since $w$ solves the scalar
conservation law \eqref{eq:scalecons}, by finite speed of propagation,
$w(\cdot,t)-c \in L^1(\R)$ and thus
$w_\alpha(\cdot,t)-w(\cdot,t)\in L^1(\R)$.  To state Kuznetsov's
lemma, we need some notation.  Let $(\eta,Q)$ be the Kru\v{z}kov
entropy/entropy-flux pair
\begin{equation*}
  \eta(w)=\abs{w-k},
  \quad 
  Q(w,k)=\abs{W(w)-W(k)},
\end{equation*}
and let
\begin{equation*}
  \begin{aligned}
    \Lambda_T(w,\test,k)&=\int_0^T \int_\R \eta(w(z,t))
    \pt \test(z,t)-
    Q(w(z,t),k)\pz \test(z,t)\, dzdt \\
    &\qquad - \int_\R \eta(w(z,T))\test(z,T) -
    \eta(y_0(z))\test(z,0)\,dz.
  \end{aligned}
\end{equation*}
Let $\omega_\eps$ be a standard mollifier and define the test function
\begin{equation*}
  \Omega_{\eps_0,\eps}(z,z',t,t')=\omega_{\eps_0,\eps}(t-t')\omega_\eps(z-z').
\end{equation*}
Let $w_\alpha$ be the unique solution of \eqref{eq:walphapde} and let
$w$ be the entropy solution of \eqref{eq:scalecons}. Observe that $w$
and $w_\alpha$ share the same initial data. Finally define
\begin{equation*}
  \Lambda_{\eps_0,\eps}(w_\alpha,w)=
  \int_0^T\int_\R \Lambda_T
  \left(w_\alpha,\Omega\left(\cdot,t',\cdot,z'\right),w\left(z',t'\right)
  \right)\,dz'\,dt'.
\end{equation*}
Since we know that $\abs{w(\cdot,t)}_{BV(\R)}\le \abs{y_0}_{BV(\R)}$
and $\abs{w_\alpha(\cdot,t)}_{BV(\R)}\le \abs{y_0}_{BV(\R)}$, in this
context Kuznetsov's lemma reads
\begin{equation*}
  \norm{w_\alpha(\cdot,t)-w(\cdot,t)}_{L^1(\R)} \le
  2\left(\eps+\norm{W'}_\infty\eps_0\right)\abs{y_0}_{BV(\R)}-
  \Lambda_{\eps_0,\eps}(w_\alpha,w).
\end{equation*}
This can be used to prove the following result quantifying the
convergence $w_\alpha \to w$.

\begin{theorem}\label{lem:walpharate}
  Suppose the assumptions of Theorem \ref{thm:walphalimit} hold. 
  Let $w_\alpha$ and $w$ be solutions
  respectively of \eqref{eq:walphapde} and \eqref{eq:scalecons}. Then
  \begin{equation*}
    \norm{w_\alpha(\cdot,t)-w(\cdot,t)}_{L^1(\R)}
    \le 2\sqrt{2 T \norm{W'}_\infty\abs{y_0}_{BV(\R)}\alpha}, \
    \ \ \text{for $t\le T$.}
  \end{equation*}
\end{theorem}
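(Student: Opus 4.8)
The plan is to insert the two solutions $w_\alpha$ and $w$ into Kuznetsov's lemma in the form quoted above, so that everything reduces to bounding $-\Lambda_{\eps_0,\eps}(w_\alpha,w)$ from above by a term of order $\alpha/\eps$; minimising $2(\eps+\norm{W'}_\infty\eps_0)\abs{y_0}_{BV(\R)}-\Lambda_{\eps_0,\eps}(w_\alpha,w)$ over $\eps$ (and letting $\eps_0\to0$) will then give the $\sqrt{\alpha}$ rate. Two facts are available: by Theorem~\ref{thm:entropyapriori} and the Kru\v{z}kov specialisation noted after Lemma~\ref{lem:discreteentropy}, $w_\alpha$ satisfies the \emph{nonlocal} entropy inequality $\pt\abs{w_\alpha-k}\le\pz\overline{Q(w_\alpha,k)}$ for every $k\in\R$, with $Q(w,k)=\abs{W(w)-W(k)}$ since $W$ is non-decreasing; and by Theorem~\ref{thm:walphalimit}, $w$ is the entropy solution of the \emph{local} law \eqref{eq:scalecons}, while by Lemma~\ref{lem:walphabv} both $w_\alpha(\cdot,t)$ and $w(\cdot,t)$ have $BV$-seminorm at most $\abs{y_0}_{BV(\R)}$ (which is exactly why the two ``good'' terms in Kuznetsov's estimate appear as stated).

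For the reduction: the functional $\Lambda_T$ is built with the \emph{local} flux $Q$, whereas $w_\alpha$ obeys the \emph{nonlocal} inequality. Fixing $(z',t')$, writing $k=w(z',t')$, and testing the nonlocal entropy inequality for $w_\alpha$ against the non-negative kernel $\Omega(\cdot,\cdot\,;z',t')$, the ``nonlocal'' part of the functional is $\ge0$ and only the correction survives, giving $\Lambda_T\bigl(w_\alpha,\Omega(\cdot,\cdot\,;z',t'),k\bigr)\ge\int_0^T\!\!\int_\R\bigl(\overline{Q(w_\alpha(\cdot,t),k)}(z)-Q(w_\alpha(z,t),k)\bigr)\pz\Omega(z,z',t,t')\,dz\,dt$. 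Integrating over $(z',t')$ and using $\abs{\pz\Omega}=\omega_{\eps_0,\eps}(t-t')\abs{\omega_\eps'(z-z')}$ yields $-\Lambda_{\eps_0,\eps}(w_\alpha,w)\le\int_0^T\!\!\int_\R\!\int_0^T\!\!\int_\R\bigl|\overline{Q(w_\alpha(\cdot,t),w(z',t'))}(z)-Q(w_\alpha(z,t),w(z',t'))\bigr|\,\abs{\pz\Omega}\,dz\,dt\,dz'\,dt'$.

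The key observation is that the integrand is bounded independently of the comparison value: since $\abs{Q(a,k)-Q(b,k)}\le\abs{W(a)-W(b)}$ for all $k$, one has $\bigl|\overline{Q(w_\alpha(\cdot,t),k)}(z)-Q(w_\alpha(z,t),k)\bigr|\le\int_0^\infty\Phi_\alpha(\zeta)\abs{W(w_\alpha(z+\zeta,t))-W(w_\alpha(z,t))}\,d\zeta=:g_\alpha(z,t)$, independent of $k$ and of $(z',t')$. Hence the $(z',t')$-integration decouples: integrating $\abs{\pz\Omega}$ in $t'$ costs a factor $\le1$ and in $z'$ costs $\norm{\omega_\eps'}_{L^1(\R)}=O(1/\eps)$, leaving $-\Lambda_{\eps_0,\eps}(w_\alpha,w)\le\frac{C}{\eps}\int_0^T\!\!\int_\R g_\alpha(z,t)\,dz\,dt$. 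By Fubini, the elementary estimate $\int_\R\abs{f(z+\zeta)-f(z)}\,dz\le\zeta\abs{f}_{BV(\R)}$ applied to $f=W(w_\alpha(\cdot,t))$, and Lemma~\ref{lem:walphabv}, $\int_\R g_\alpha(z,t)\,dz\le\bigl(\int_0^\infty\Phi_\alpha(\zeta)\zeta\,d\zeta\bigr)\norm{W'}_\infty\abs{y_0}_{BV(\R)}=c\,\alpha\norm{W'}_\infty\abs{y_0}_{BV(\R)}$ with $c=\int_0^\infty\Phi(\zeta)\zeta\,d\zeta<\infty$ by \eqref{eq:kernel-ass1}, so $-\Lambda_{\eps_0,\eps}(w_\alpha,w)\le C'\,T\norm{W'}_\infty\abs{y_0}_{BV(\R)}\,\alpha/\eps$.

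Inserting this into Kuznetsov's bound, sending $\eps_0\to0$, and minimising $2\eps\abs{y_0}_{BV(\R)}+C'\,T\norm{W'}_\infty\abs{y_0}_{BV(\R)}\,\alpha/\eps$ over $\eps>0$ (the optimal $\eps$ being of order $\sqrt{\alpha}$) gives a bound of order $\sqrt{T\norm{W'}_\infty\abs{y_0}_{BV(\R)}\,\alpha}$; carrying the numerical constants through the optimisation produces exactly $2\sqrt{2\,T\norm{W'}_\infty\abs{y_0}_{BV(\R)}\,\alpha}$. The step I expect to be the main obstacle is the ``doubling of variables'' bookkeeping behind $\Lambda_T(w_\alpha,\Omega,k)\ge\int\!\!\int(\overline{Q(w_\alpha,k)}-Q(w_\alpha,k))\pz\Omega$: one must handle the temporal boundary contributions at $t=0$ and $t=T$ correctly and absorb the initial-layer mismatch coming from the fact that the solution of \eqref{eq:walphapde} starts from $\overline{y_0}$ rather than $y_0$, which is controlled by $\norm{\overline{y_0}-y_0}_{L^1(\R)}\le c\,\alpha\abs{y_0}_{BV(\R)}$ and hence is of order $\alpha\le\sqrt{\alpha}$ (harmless). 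The remainder is routine, since for fixed $\alpha$ the function $w_\alpha$ is genuinely Lipschitz, so all manipulations are on honest functions, and the $\alpha$-independent $BV$ and $L^1$-in-time bounds needed are precisely those of Lemma~\ref{lem:walphabv}.
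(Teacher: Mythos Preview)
Your proposal is correct and follows essentially the same route as the paper: both arguments plug $w_\alpha$ into Kuznetsov's lemma, invoke the nonlocal entropy inequality of Theorem~\ref{thm:entropyapriori} to reduce $-\Lambda_{\eps_0,\eps}(w_\alpha,w)$ to the discrepancy $\overline{Q(w_\alpha,k)}-Q(w_\alpha,k)$, bound this by $\norm{W'}_\infty\int_0^\infty\Phi_\alpha(\zeta)\abs{w_\alpha(z+\zeta,t)-w_\alpha(z,t)}\,d\zeta$, and then use the $BV$ bound \eqref{eq:walphabv} together with the first-moment assumption \eqref{eq:kernel-ass1} to produce the $T\norm{W'}_\infty\abs{y_0}_{BV(\R)}\alpha/\eps$ term before optimising in~$\eps$. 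Your explicit flagging of the initial-layer mismatch $\norm{\overline{y_0}-y_0}_{L^1(\R)}\le c\,\alpha\abs{y_0}_{BV(\R)}$ is a point the paper glosses over (it simply asserts that $w$ and $w_\alpha$ ``share the same initial data''), so if anything your bookkeeping is slightly more careful there.
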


\begin{proof}
  Using Theorem~\ref{thm:entropyapriori}
  \begin{align*}
    -\Lambda_{\eps_0,\eps}(w_\alpha,w)
    =&-\overline{\Lambda}_{\eps_0,\eps}(w,w_\alpha) +
       \overline{\Lambda}_{\eps_0,\eps}(w,w_\alpha) -
       \Lambda_{\eps_0,\eps}(w_\alpha,w)\\
     &\le \abs{\overline{\Lambda}_{\eps_0,\eps}(w,w_\alpha) -
       \Lambda_{\eps_0,\eps}(w_\alpha,w)},
  \end{align*}
  where
  \begin{align*}
    \overline{\Lambda}_{\eps_0,\eps}(w_\alpha,w)
    &=
      \int_0^T\int_\R\int_0^T\int_\R
      \abs{w_\alpha(z,t)-w(z',t')} \pt\Omega(z,z',t,t')\\
    &\quad\hphantom{\int_0^T\int_\R\int_0^T\int_\R}
      -\overline{Q(w_\alpha,w(z',t'))}(z,t)\pz\Omega(z,z',t,t')
      \,dzdt\,dz'dt'\\
    &\qquad -
      \int_0^T\int_\R\int_\R \abs{w_\alpha(z,T)-w(z',t')}\Omega(z,z',T,t')\\
    &\qquad \hphantom{-\int_0^T\int_\R\int_\R}
      -\abs{w_\alpha(z,0)-w(z',t')}\Omega(z,z',0,t')\,dz\,dz'dt'.
  \end{align*}
Thus
\begin{align*}
  \overline{\Lambda}_{\eps_0,\eps}(w,w_\alpha) -
  \Lambda_{\eps_0,\eps}(w_\alpha,w)
  &=\int_0^T\int_\R\int_0^T\int_\R
    \left(Q(w_\alpha(z,t),w(z',t'))-\overline{Q(w_\alpha,w(z',t'))}(z,t)\right)
    \\
  &\hphantom{\int_0^T\int_\R\int_0^T\int_\R}\quad\times
    \pz\Omega(z,z',t,t')
    \,dzdt\,dz'dt'.
\end{align*}
Regarding the difference $Q(\ )-\overline{Q}(\ )$,
\begin{align*}
  \abs{Q(w_\alpha(z,t),w(z',t'))-\overline{Q(w_\alpha,w(z',t'))}(z,t)}
  &=\Bigl|
    \int_0^\infty \Phi_\alpha(\zeta)
    \bigl(Q(w_\alpha(z,t),w(z',t'))\\
  &\hphantom{=\Bigl|
    \int_0^\infty \Phi_\alpha(\zeta)
    \bigl(} \quad
    -Q(w_\alpha(z+\zeta,t),w(z',t'))\bigr)\,d\zeta\Bigr|\\
  &\le \norm{W'}_\infty
    \int_0^\infty \Phi_\alpha(\zeta)
    \abs{w_\alpha(z+\zeta,t)-w_\alpha(z,t)}\,d\zeta.
\end{align*}
Therefore we can proceed as follows:
\begin{align*}
  -\Lambda_{\eps_0,\eps}(w_\alpha,w)
  &\le  \norm{W'}_\infty\int_0^T\int_\R\int_0^T\int_\R \int_0^\infty
    \Phi_\alpha(\zeta) \abs{w_\alpha(z+\zeta,t)-w_\alpha(z,t)}\\
  &\hphantom{\le  \norm{W'}_\infty\int_0^T\int_\R\int_0^T\int_\R
    \int_0^\infty}
    \times \omega_{\eps_0}(t-t')\omega_{\eps}'(z-z')
    \,d\zeta \,dzdt\,dz'dt\\
  &\le  \norm{W'}_\infty\int_0^T  \int_0^\infty
    \Phi_\alpha(\zeta)\zeta \abs{y_0}_{BV(\R)} \frac{1}{\eps}
    \,d\zeta\,dt\\
  &\le T \norm{W'}_\infty\abs{y_0}_{BV(\R)} \frac{\alpha}{\eps},
\end{align*}
where we have used \eqref{eq:kernel-ass1}. Hence
\begin{equation*}
   \norm{w_\alpha(\cdot,t)-w(\cdot,t)}_{L^1(\R)} 
   \le 2\eps + T \norm{W'}_\infty\abs{y_0}_{BV(\R)} \frac{\alpha}{\eps},
 \end{equation*}
 for $\eps>0$. Minimising the right hand side over $\eps$ concludes
 the proof. 
\end{proof}

Theorems \ref{thm:walphalimit} and \ref{lem:walpharate} 
state that as the filter size $\alpha$ approaches 0, the 
filtered variables $w_\alpha$, which are equal 
to $\overline{y_\alpha}$, converge strongly 
in $L^1_{\operatorname{loc}}$ to the entropy 
solution of the LWR conservation law \eqref{eq:scalecons}. 
By Proposition \ref{prop:weak-conv-yalpha}, we 
know only that $y_\alpha$ converges weakly. 
The question of whether the 
Lagrangian variables $y_\alpha$ (spacing between cars) 
also converge strongly is a natural one, and our 
next result shows that this is true when 
using the exponential kernel. 

\begin{corollary}\label{lem:strongy}
  Suppose the assumptions of Theorem \ref{thm:walphalimit} hold, 
  and specify $\Phi(\zeta)=e^{-\zeta}$. 
  Let $y_\alpha$ and $w$ be solutions respectively 
  of \eqref{eq:3c} and \eqref{eq:scalecons}. Then
  \begin{equation*}
    \norm{y_\alpha(\cdot,t)-w(\cdot,t)}_{L^1(\R)}
    \le \alpha\abs{y_0}_{BV(\R)}
    +2\sqrt{2 T \norm{W'}_\infty\abs{y_0}_{BV(\R)}\alpha}, \
    \quad \text{for $t\in [0,T]$.}
  \end{equation*}
\end{corollary}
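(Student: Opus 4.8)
The plan is to split via the triangle inequality
$$
\norm{y_\alpha(\cdot,t)-w(\cdot,t)}_{L^1(\R)}
\le \norm{y_\alpha(\cdot,t)-w_\alpha(\cdot,t)}_{L^1(\R)}
+\norm{w_\alpha(\cdot,t)-w(\cdot,t)}_{L^1(\R)},
$$
and to control the two terms separately. The second term is precisely what Theorem~\ref{lem:walpharate} estimates, namely $\norm{w_\alpha(\cdot,t)-w(\cdot,t)}_{L^1(\R)}\le 2\sqrt{2T\norm{W'}_\infty\abs{y_0}_{BV(\R)}\alpha}$ for $t\le T$ (the hypotheses of Theorem~\ref{thm:walphalimit}, hence of Theorem~\ref{lem:walpharate}, are assumed here). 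So the whole task reduces to showing that the ``defiltering'' error satisfies $\norm{y_\alpha(\cdot,t)-w_\alpha(\cdot,t)}_{L^1(\R)}\le \alpha\abs{y_0}_{BV(\R)}$, and this is where the exponential kernel is used.

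For $\Phi(\zeta)=e^{-\zeta}$ we have $\Phi_\alpha(\zeta)=\alpha^{-1}e^{-\zeta/\alpha}$ and $\Phi_\alpha'=-\alpha^{-1}\Phi_\alpha$, so the averaging operator is inverted by a first-order differential operator. Concretely, recalling from Section~\ref{sec:conv-scheme} that $\overline{h}'(z)=\int_0^\infty\Phi_\alpha'(\zeta)\bigl[h(z)-h(z+\zeta)\bigr]\,d\zeta$, and that $w_\alpha=\overline{y_\alpha}$ (Proposition~\ref{prop:weak-conv-yalpha}), one gets
$$
\pz w_\alpha(z,t)=-\frac{1}{\alpha}\int_0^\infty\Phi_\alpha(\zeta)\bigl[y_\alpha(z,t)-y_\alpha(z+\zeta,t)\bigr]\,d\zeta
=\frac{1}{\alpha}\bigl(w_\alpha(z,t)-y_\alpha(z,t)\bigr),
$$
that is, $y_\alpha=w_\alpha-\alpha\,\pz w_\alpha$ (a.e.\ in $z$, for each $t$). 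The one point needing a word of justification is that $y_\alpha\in L^\infty$ only a priori; I would note that this identity either follows directly because $w_\alpha=\overline{y_\alpha}$ is the unique bounded solution on $\R$ of the linear ODE $\alpha\,w'-w=-y_\alpha$ with the appropriate behaviour at $+\infty$, or can be obtained by passing to the limit in the exact discrete relation between $y^n_i$ and $w^n_i=\sum_{j\ge i}\Phi_{ij\alpha}y^n_j$ from Section~\ref{sec:PDE-for-y}. Either way, since $w_\alpha(\cdot,t)$ is Lipschitz continuous and lies in $BV(\R)$ (Lemma~\ref{lem:walphabv}), we have $\pz w_\alpha(\cdot,t)\in L^1(\R)$ with $\norm{\pz w_\alpha(\cdot,t)}_{L^1(\R)}=\abs{w_\alpha(\cdot,t)}_{BV(\R)}$.

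Putting these together,
$$
\norm{y_\alpha(\cdot,t)-w_\alpha(\cdot,t)}_{L^1(\R)}
=\alpha\,\norm{\pz w_\alpha(\cdot,t)}_{L^1(\R)}
=\alpha\,\abs{w_\alpha(\cdot,t)}_{BV(\R)}
\le \alpha\,\abs{y_0}_{BV(\R)},
$$
where the last inequality is the $\alpha$-independent $BV$ bound \eqref{eq:walphabv}. Combining this with the bound on $\norm{w_\alpha(\cdot,t)-w(\cdot,t)}_{L^1(\R)}$ from Theorem~\ref{lem:walpharate} yields the claimed estimate. There is no serious obstacle here; the only delicate step is the rigorous passage from $w_\alpha=\overline{y_\alpha}$ to $y_\alpha=w_\alpha-\alpha\pz w_\alpha$ given the limited regularity of $y_\alpha$, and that is handled by the uniqueness of bounded solutions of the inverting ODE (or by the discrete identity).
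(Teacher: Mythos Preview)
Your proof is correct and follows essentially the same route as the paper: split by the triangle inequality, use Theorem~\ref{lem:walpharate} for $\norm{w_\alpha-w}_{L^1}$, and exploit the exponential-kernel identity $y_\alpha=w_\alpha-\alpha\,\pz w_\alpha$ together with the $BV$ bound \eqref{eq:walphabv} to control $\norm{y_\alpha-w_\alpha}_{L^1}$. The paper states the identity without the regularity discussion you add, so your version is slightly more detailed but otherwise identical.
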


\begin{proof}
Due to the special choice of the function $\Phi$ we have the identity 
$-\alpha\pz w_\alpha+w_\alpha=y_\alpha$.
Thus, using \eqref{eq:walphabv} and 
Theorem \ref{lem:walpharate}, we get
\begin{align*}
    \norm{y_\alpha(\cdot,t)-w(\cdot,t)}_{L^1(\R)}
    & \le  \norm{y_\alpha(\cdot,t)-w_\alpha(\cdot,t)}_{L^1(\R)}
    +\norm{w_\alpha(\cdot,t)-w(\cdot,t)}_{L^1(\R)}
    \\ &
    \le \alpha \abs{w_\alpha(\cdot,t)}_{BV(\R)}
    +2\sqrt{2 T \norm{W'}_\infty\abs{y_0}_{BV(\R)}\alpha}
    \\ &
    \le \alpha  \abs{y_0}_{BV(\R)}
    +2\sqrt{2 T \norm{W'}_\infty\abs{y_0}_{BV(\R)}\alpha}.
\end{align*}
\end{proof}

\begin{remark}
Let us examine conditions on the 
kernel $\Phi$ that enhance the weak convergence of ${y_\alpha}$ 
from Proposition \ref{prop:weak-conv-yalpha} to strong convergence (to the 
limit $w$ of $w_\alpha$). It appears that the only 
scenario is the one described in Corollary \ref{lem:strongy}.
Using \eqref{eq:3c},
\begin{align*}
	\pz w_{\alpha}(z,t) & =-\Phi_\alpha(0)y_{\alpha}(z,t)
	-\int_{0}^{\infty}\Phi_\alpha'\left({\zeta}\right)
	y_{\alpha}(\zeta+z,t)\,d\zeta
	\\ & =\Phi_\alpha(0)(w_\alpha(z,t)-y_{\alpha}(z,t))
  -\int_{0}^{\infty}\left(\Phi_\alpha(0)\Phi_\alpha(\zeta)
  +\Phi_\alpha'\left({\zeta}\right)\right)y_{\alpha}(\zeta+z,t)\,d\zeta
  \\ & =
  \frac{\Phi(0)}{\alpha}(w_\alpha(z,t)-y_{\alpha}(z,t))
  -\frac{1}{\alpha^2}\int_{0}^{\infty}
  \left(\Phi(0)\Phi\left(\frac{\zeta}{\alpha}\right)
  +\Phi'\left(\frac{\zeta}{\alpha}\right)\right)
  y_{\alpha}(\zeta+z,t)\,d\zeta.
\end{align*}
For every $R>0$, using \eqref{eq:4} and \eqref{eq:walphabv},
\begin{align*}
	\int_{-R}^R & \abs{w_\alpha(z,t)-y_{\alpha}(z,t)}\, dz
	\\ & 
	\le \frac{\alpha}{\Phi(0)}\int_{-R}^R 
	\abs{\pz w_{\alpha}(z,t)}\, dz
	+\frac{1}{\alpha\Phi(0)}\int_{-R}^R\int_{0}^{\infty}
	\left|\Phi(0)\Phi\left(\frac{\zeta}{\alpha}\right)
	+\Phi'\left(\frac{\zeta}{\alpha}\right)\right|y_{\alpha}(\zeta+z,t)
	\,d\zeta\,dz
	\\ & \le 
	\frac{\alpha}{\Phi(0)}\abs{w_\alpha(\cdot,t)}_{BV(\R)}
	+\frac{2R\norm{y_\alpha(\cdot,t)}_{L^\infty(\R)}}{\Phi(0)}
	\int_{0}^{\infty}\left|\Phi(0)\Phi\left({\zeta}\right)
	+\Phi'\left({\zeta}\right)\right|\,d\zeta
	\\ & \le 
	\frac{\alpha}{\Phi(0)}\abs{y_0}_{BV(\R)}
	+\frac{2R\norm{y_0}_{L^\infty(\R)}}{\Phi(0)}
	\int_{0}^{\infty}\left|\Phi(0)\Phi\left({\zeta}\right)
	+\Phi'\left({\zeta}\right)\right|\,d\zeta.
\end{align*}
Strong convergence is achieved only when the last term is zero, 
meaning $\Phi(0)\Phi\left({\zeta}\right)+\Phi'\left({\zeta}\right)=0$, which 
only holds when $\Phi(\zeta)=e^{-\zeta}$. Although numerical 
evidence suggests that strong convergence of $y_\alpha$ occurs 
for Lipschitz continuous kernels different from $e^{-\zeta}$, weak 
convergence (oscillations persist) is observed for $BV$ (discontinuous) 
kernels in the limit as $\alpha\to 0$.
\end{remark}

\section{Numerical examples}\label{sec:numerics}
\graphicspath{{figures/}}

This section presents three numerical experiments 
that showcase the features of our proposed model and 
compare it with established models in the field, giving a 
deeper understanding and valuable insights for future improvement.

\subsection{Comparing different models}

We compare solutions of the standard (local) 
LWR FtL model, the more sophisticated non-local FtL model given by
\eqref{eq:nonlocal-FtL-density}, \eqref{eq:arithmetic-mean}, and the
nonlocal FtL model \eqref{eq:nonlocal-FtL-harmonic},
\eqref{eq:Lagrangeweights} proposed in this work. 

Concretely, let the initial values (initial
positions of vehicles) $x_i(0)=\tx_i(0)=\bx_i(0)$ be specified as
follows: Let $\ell$ be a small parameter (the length of a vehicle) and
$\rho_0$ be a function such that $0<\rho_0(x)\le 1$ and that
$\rho_0(x)$ is constant for $x$ outside the interval $(a,b)$. Then we
set $x_1(0)=a$ and define $x_{i+1}(0)$, $u_i(0)$ by
\begin{equation*}
  \int_{x_i(0)}^{x_{i+1}(0)} \rho_0(x)\,dx=\ell,
  \quad
  u_i(0)=\frac{\ell}{x_{i+1}(0)-x_i(0)}, 
  \quad 
  i=1,\ldots,N,
\end{equation*}
where $N$ is the smallest integer such that
$x_{N+1}(0)>b$. Finally, we set $u_{N+1}(0)=\rho_0(x_{N+1})$,
$x_{N+1}=\infty$ and $u_0(0)=\rho_0(a-1)$. Given
$\seq{\tx_i}_{i=1}^{N+1}$ with $x_{N+1}=\infty$ and $z_i=i\ell$ for
$i=1,\ldots,N$, $z_{N+1}=\infty$, define the $N\times N$ upper
triangular matrices $\tilde{\Phi}_\alpha$ and $\overline{\Phi}_\alpha$
with entries
\begin{equation*}
  \tilde{\Phi}_{i,j,\alpha}=\int_{\tx_j}^{\tx_{j+1}}
  \Phi_\alpha\left(\xi-\tx_i\right)\,d\xi, \ \ \ \
  \overline{\Phi}_{i,j,\alpha}=\int_{z_j}^{z_{j+1}}
  \Phi_\alpha\left(\zeta-z_i\right)\,d\zeta,    
\end{equation*}
respectively. Observe that
$\tilde{\Phi}_{N,N,\alpha}=\overline{\Phi}_{N,N,\alpha}=1$. For $t>0$,
$i=1,\ldots,N-1$, let $x_i(t)$, $\tx_i(t)$ and $\bx_i(t)$ solve
\begin{align}
  \text{(local FtL)}
  \qquad 
  x_i'
  &=V(u_i),\ \ \
    u_i = \frac{\ell}{x_{i+1}-x_i},
    \label{eq:numLWR}\\
  \text{(standard nonlocal FtL)}
  \qquad
  \tx_i'
  &=V(\tu_i),\ \ \
    \tu_i = \sum_{j=i}^N\tilde{\Phi}_{i,j,\alpha}
    \frac{\ell}{\tx_{j+1}-\tx_j},
    \label{eq:numtilde}\\
  \text{(our nonlocal FtL)}
  \qquad
  \bx_i'
  &=V(\bu_i),\ \ \
    \bu_i = \Bigl(\, \sum_{j=i}^N
    \overline{\Phi}_{i,j,\alpha}
    \frac{\bx_{i+1}-\bx_i}{\ell}\, \Bigr)^{-1},
    \label{eq:numbar}
\end{align}
and $x_N'=\tx_N'=\bx_N'=V(u_N)$, where $V$ is a non-increasing 
Lipschitz continuous function $V:[0,1]\mapsto [0,1]$ with $V(1)=0$. 
We define the piecewise constant
function
\begin{equation*}
  u_\ell(x,t)=
  \begin{cases}
    u_0 & x\le a,\\ u_i(t)& x_{i}(t)< x\le x_{i+1}(t), 
    \ \
    i=1,\ldots,N-1,\\
    u_{N}& x_N(t) < x.
  \end{cases}
\end{equation*}
The piecewise constant functions $\tu_\ell$ and $\bu_\ell$ are defined
analogously. To solve \eqref{eq:numLWR} -- \eqref{eq:numbar}
numerically we utilise the explicit Euler scheme with $\Dt=\ell$. In
all our computations we use
\begin{equation*}
  V(v)=1-v\ \ \ \text{and}\ \ \ \Phi(\xi)=e^{-\xi}.
\end{equation*}
We consider the (box) initial condition
\begin{equation}
  \label{eq:rho01}
  \rho_0(x)=
  \begin{cases}
    1 & \abs{x}<0.75,\\ 0.05 &\text{otherwise.}
  \end{cases}
\end{equation}
If Figure~\ref{fig:alphalarge} we show a numerical solution to
\eqref{eq:numLWR} -- \eqref{eq:numbar} computed with the explicit
Euler scheme and $\alpha=0.5$ at $t=1.4$ for $\ell=0.06$ (left) and
$\ell=0.005$ (right). It appears that the limits as $\ell\to 0$ of
$\tu_\ell$ and $\bu_\ell$ are different, and that both of these differ
from the limit of $u_\ell$ --- the entropy solution of
the conservation law \eqref{eq:LWR-eqn}. We also observe that the
limits of $\tu_\ell$ and $\bu_\ell$ (as $\ell \to 0$) seem to have both
positive and negative jumps and thus cannot satisfy an Oleinik type
entropy condition.
\begin{figure}[h]
  \centering
  \begin{tabular}[h!]{lr}
    \includegraphics[width=0.5\linewidth]{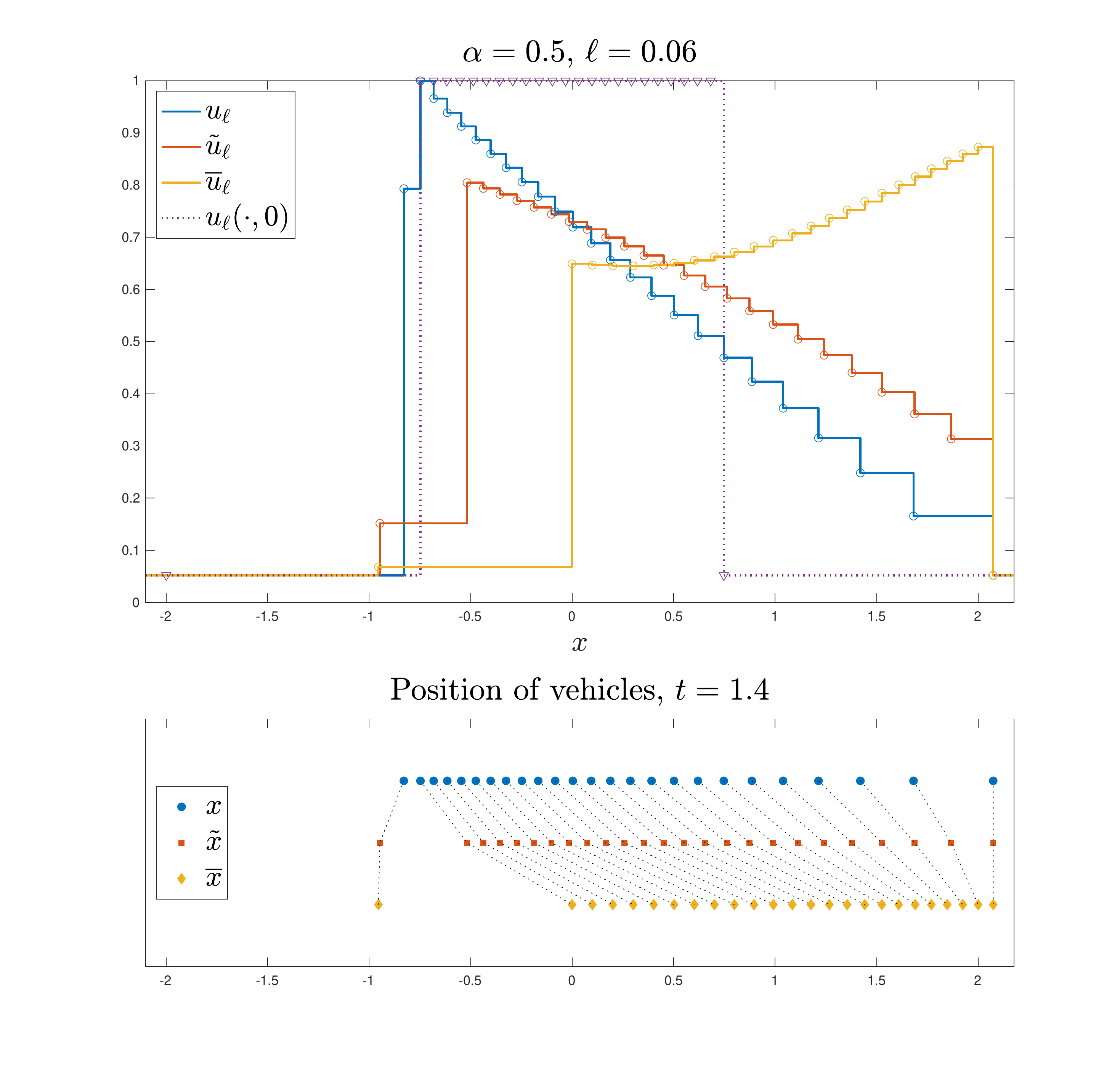}
    &\includegraphics[width=0.5\linewidth]{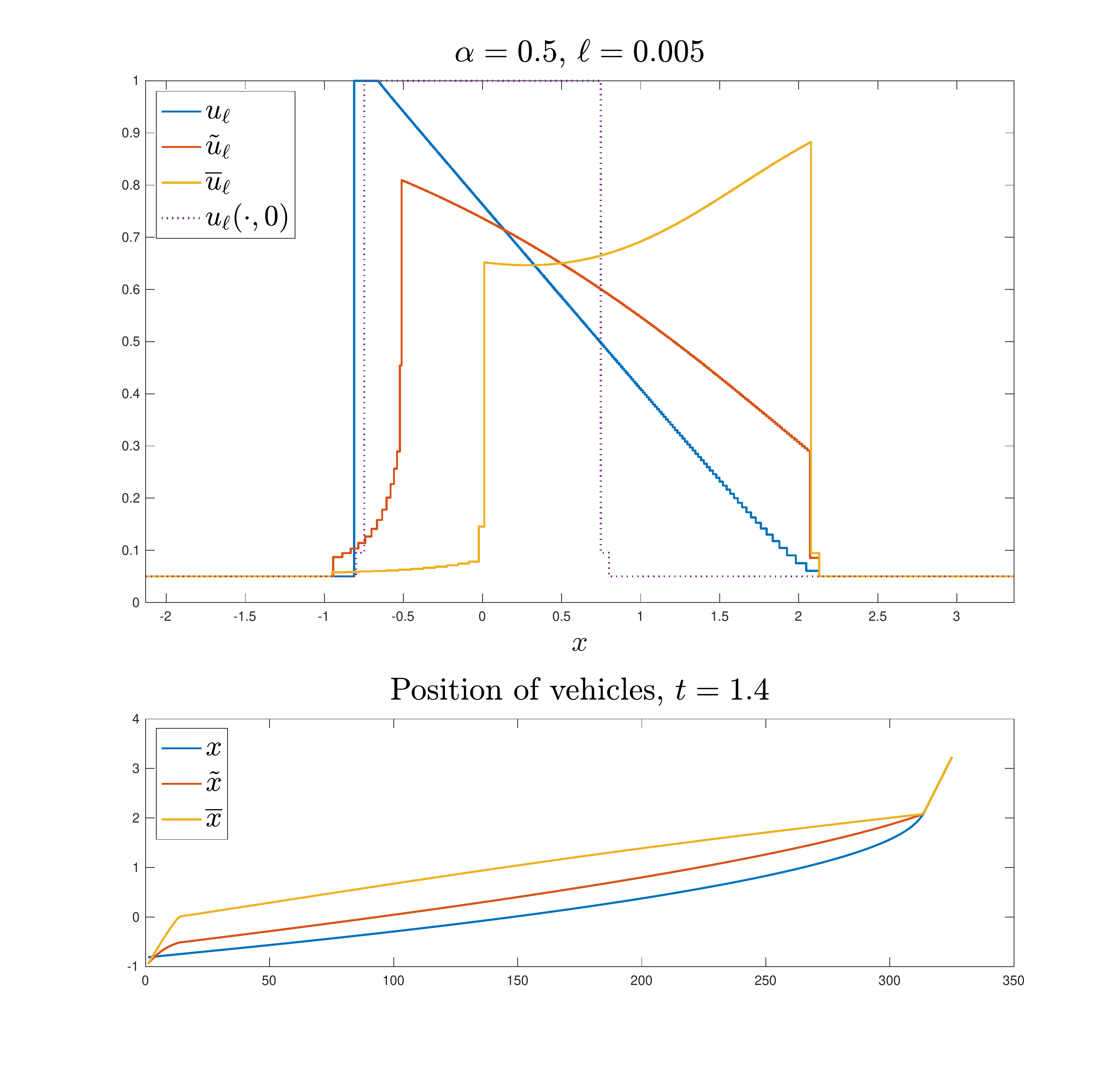}
  \end{tabular}
  \caption{Numerical solutions of \eqref{eq:numLWR} --
    \eqref{eq:numbar} computed by the explicit Euler scheme. Left:
    $\ell=0.06$ , right: $\ell=0.005$.}
  \label{fig:alphalarge}
\end{figure}

The simulations show that when the speed is determined 
using weighted Lagrangian coordinates \eqref{eq:numbar}, vehicles drive faster compared 
to when the speed is determined by the local FtL model \eqref{eq:numLWR} 
or Eulerian coordinates \eqref{eq:numtilde}. This is because 
the Lagrangian distance between vehicles remains constant 
even if the Eulerian distance increases. The Lagrangian 
distance is always less than or equal to the Eulerian distance, giving 
the Lagrangian model more weight to spacings further ahead. 
As a result, in a decreasing density or thinly occupied road, the 
speed determined by the Lagrangian model is greater 
than or equal to that determined by the Eulerian model.

\subsection{The zero-filter ($\alpha\to 0$) limit}\label{sub:ato0}

We now study the scheme \eqref{eq:wscheme} for $\alpha=1/2$,
$\alpha=1/8$, $\alpha=1/32$ and $\alpha=1/128$ in order to compare
$1/w_\alpha$ and $1/y_\alpha$ with $\rho$, where $\rho$ is the unique 
entropy solution of the local LWR model
\begin{equation}\label{eq:rholaw}
  \pt\rho+\px (\rho V(\rho))=0,\ \ \ \rho(x,0)=\rho_0(x).
\end{equation}
In this setting ($\rho_0=\mathrm{const}$ outside an interval $(a,b)$),
we define $u_i(0)$ and the matrix $\overline{\Phi}_\alpha$ as in the
previous section and then define the initial data
\begin{equation}
  \label{eq:initdata}
  y_i^0=\frac{1}{u_i(0)}\ \ \ \text{and}\ \ \ w^0_i = \sum_{j=i}^N
  \overline{\Phi}_{i,j,\alpha} y^0_j,
\end{equation}
for $i=1,\ldots,N$. Set $\Dz=\ell$, $\lambda=\Dt/\Dx$ where $\Dt$ is
chosen such that the CFL-condition \eqref{eq:CFLcond} holds. Let
$w^n_i$ satisfy \eqref{eq:wscheme}, which in this context reads
\begin{equation}
  \label{eq:wschemep}
  w_i^{n+1}=w^n_i+\lambda\Bigl(\sum_{j=i+1}^N
  \overline{\Phi}_{i+1,j,\alpha} W\left(w^n_j\right)-
  \sum_{j=i+1}^N
  \overline{\Phi}_{i,j,\alpha} W\left(w^n_j\right) \Bigr),
\end{equation}
for $i=1,\ldots,N$. The scheme for $y^n_i$ then reads
\begin{equation*}
  y_i^{n+1}=y^n_i+\lambda\left(W\left(w^n_{i+1}\right)
  -W\left(w^n_{i}\right)\right),
\end{equation*}
for $i=1,\ldots,N$. It is not very elucidating to compare $1/y$ and
$1/w$ with $\rho$ in Lagrangian coordinates, let therefore the
``discrete Eulerian coordinates'' $\xi_i^n$ be defined by
\begin{equation*}
  \xi^n_1=x_1(0)+\Dt \sum_{m=1}^n 
  V\Bigl(\,\frac{1}{y^m_1}\,\Bigr),
  \ \ \
  \xi^n_{i+1}=\xi^n_i+y^n_i\Dz,\ \ i=1,\ldots,N-1,
\end{equation*}
cf.~\eqref{eq:uidef}. Hence, we expect that
\begin{equation*}
  \frac{1}{w^n_i}\approx \rho(\xi^n_i,t^n)\ \ \ \text{and}\ \ \
  \frac{1}{y^n_i}\approx \rho(\xi^n_i,t^n)
\end{equation*}
for sufficiently small $\alpha$.
\begin{figure}[h]
  \centering
  \begin{tabular}[h!]{lr}
    \includegraphics[width=0.5\linewidth]{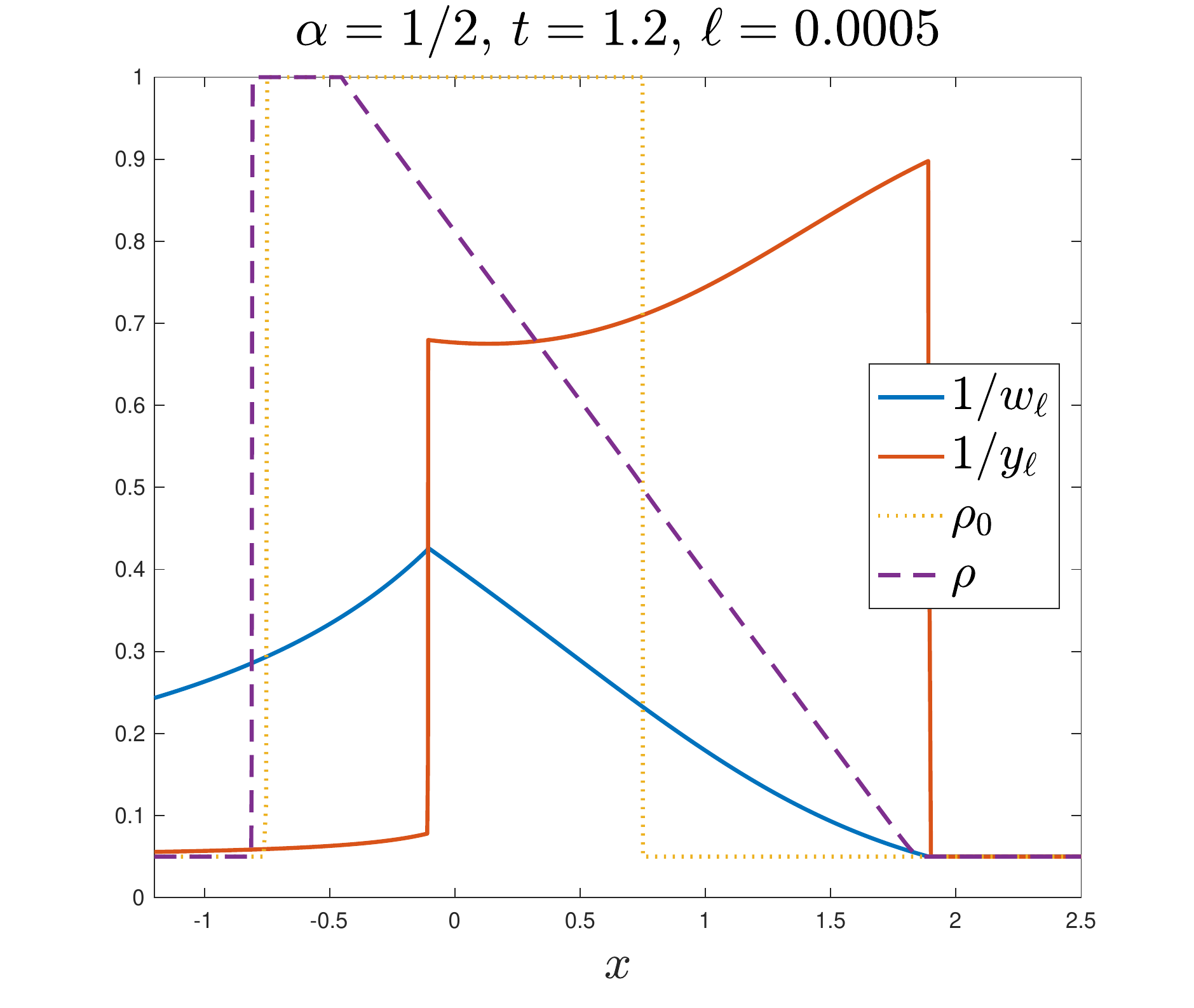}
    &\includegraphics[width=0.5\linewidth]{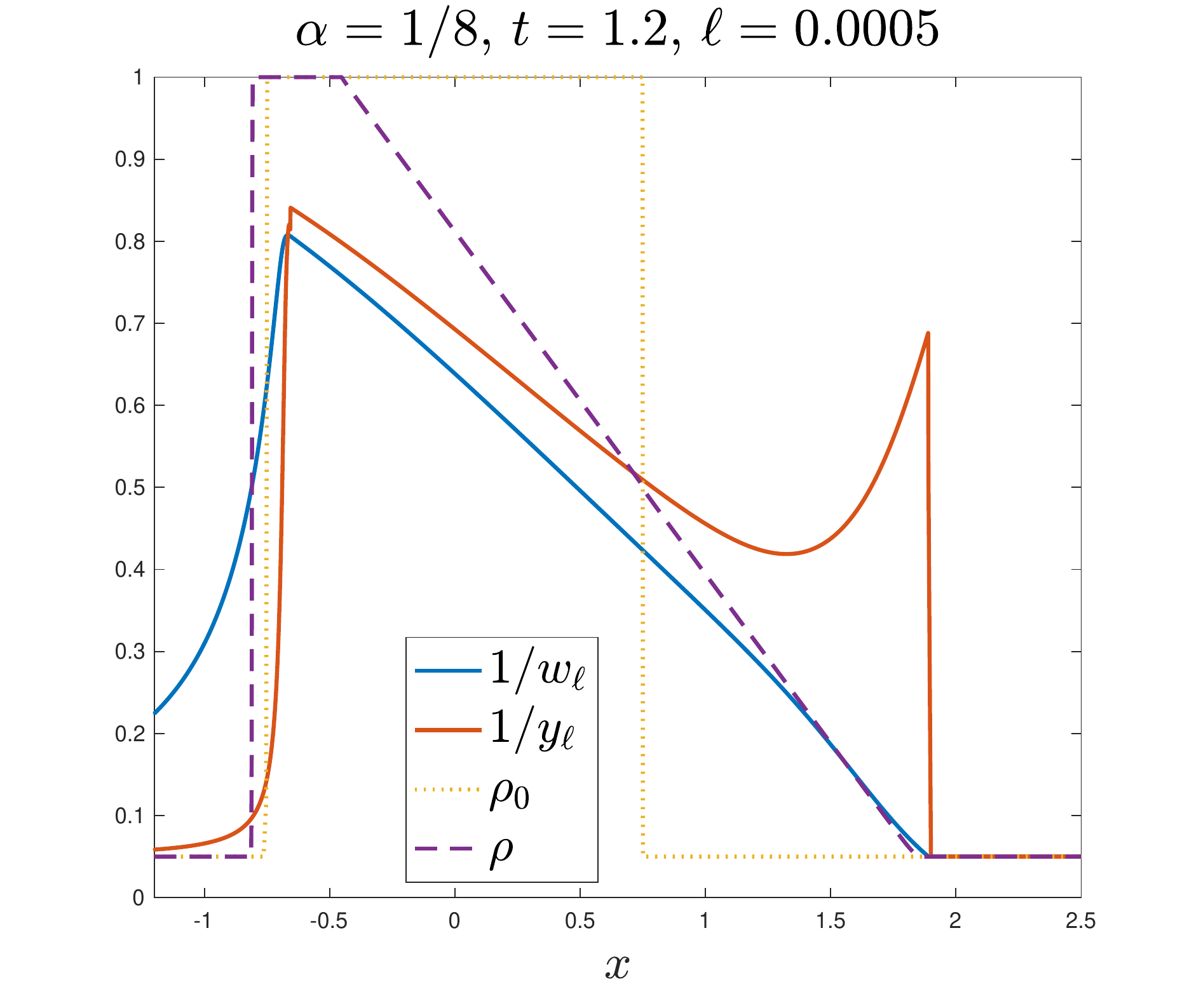}\\
    \includegraphics[width=0.5\linewidth]{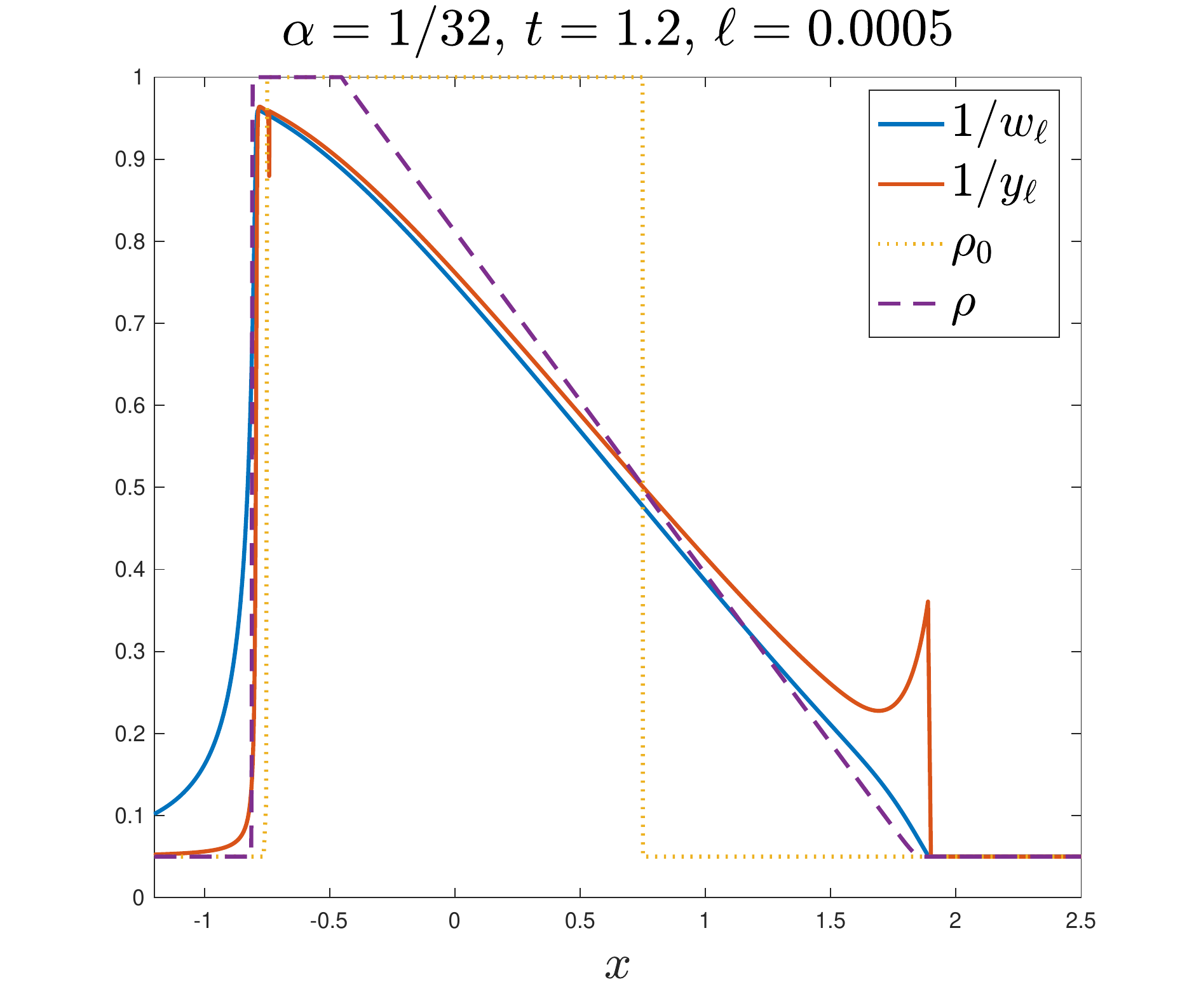}
    &\includegraphics[width=0.5\linewidth]{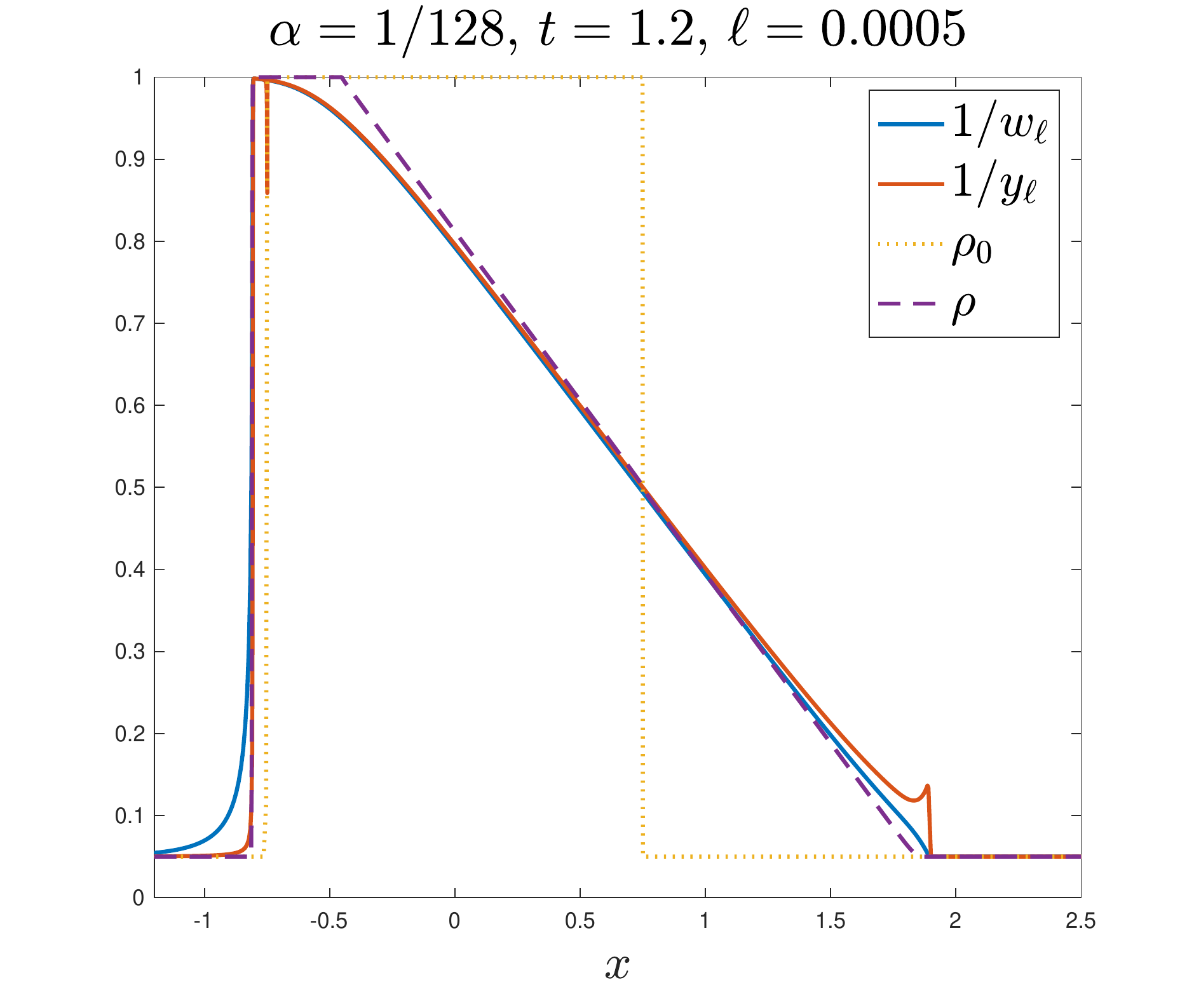}    
  \end{tabular}
  \vspace{-1.5em}
  \caption{Solutions of \eqref{eq:wschemep}, \eqref{eq:numbar},with
    initial data given in \eqref{eq:rho01}, \eqref{eq:initdata}. For
    all computations $t=1.2$ and $\ell=1/2000$. For comparisons we
    also show a numerical solution of \eqref{eq:rholaw}. Upper left:
    $\alpha=1/2$, upper right: $\alpha=1/8$, lower left:
    $\alpha=1/32$, lower right: $\alpha=1/128$.}
  \label{fig:alphaconv}
\end{figure}
In Figure~\ref{fig:alphaconv} shows $1/w$, $1/y$ and $\rho$ for
different values of $\alpha$. In these plots the $x$ axis is the Eulerian
coordinates, i.e., we plot the points
\begin{equation*}
  \left(\xi^n_i,1/w^n_i\right)\ \ \ \text{and}\ \ \
  \left(\xi^n_i,1/y^n_i\right),
\end{equation*}
for all relevant $i$, and $n$ is such that $t^n=1.2$. The
approximation to the conservation law \eqref{eq:rholaw} is computed
with the Engquist-Osher scheme on a fine grid. From this figure, we
see that $\seq{1/w^n_i}_{i=1}^N$ and $\seq{1/y^n_i}_{i=1}^N$ approach
$\seq{\rho(\xi^n_i,t^n)}_{i=1}^N$ in $L^1$ as $\alpha$ traverses the
sequence $\seq{1/2,1/8,1/32,1/128}$.

\subsection{Convergence of $y_\alpha$ and the effect of different filters.}
\label{subsec:strongconv} 

We proved that the filter $\Phi=\Phi_{\mathrm{exp}}(z)=e^{-z}$ 
results in strong convergence of $y_\alpha$ to $1/\rho$, the 
entropy solution of the local LWR conservation law \eqref{eq:LWR-eqn}. 
This convergence, which followed from  
$\norm{y_\alpha(\cdot,t)-w_\alpha(\cdot,t)}_{L^1(\R)} 
\lesssim \mathcal{O}(\alpha)$, was also seen in 
previous experiments. However, this strong convergence 
has only been proven for this specific filter 
and may not hold for others. To test this we
experimented with other Lipschitz continuous filters:
\begin{equation*}
	\Phi_1(z)=\frac{4}{\pi}\frac{1}{(1+z^2)^2},\ \ \Phi_{\mathrm{tri}}(z)=2\max\seq{1-z,0}\ \
   \text{and even}\ \ \Phi_2(z)=\frac{2}{\pi}\frac{1}{1+z^2},
\end{equation*}
although the last filter is not covered by the theory in this
paper. Our numerical experiments show that $y_\alpha$ 
converges strongly for all filters. However, for the 
discontinuous filter $\Phi_{\mathrm{box}}(z)=\chi_{(0,1)}(z)$, we 
observe weak convergence oscillations that persist 
as $\alpha\to 0$. 

Oscillatory solutions can be attributed 
to stop-and-go traffic patterns \cite{TreiberKesting:13}. Recall that 
stop-and-go traffic refers to a situation where cars 
frequently start and stop, resulting in waves of congestion 
that can propagate through a traffic flow and cause oscillations. 

In Figure~\ref{fig:oscillations} we compare computations
using the initial data \eqref{eq:rho01}, $\ell=1/5000$, and the
filters $\Phi_{\mathrm{tri}}$ (left column) and $\Phi_{\mathrm{box}}$
(right column). In the first row $\alpha=1/32$ and in the second row
$\alpha=1/128$.
\begin{figure}[h]
  \centering
  \begin{tabular}[h!]{lr}
    \includegraphics[width=0.5\linewidth]{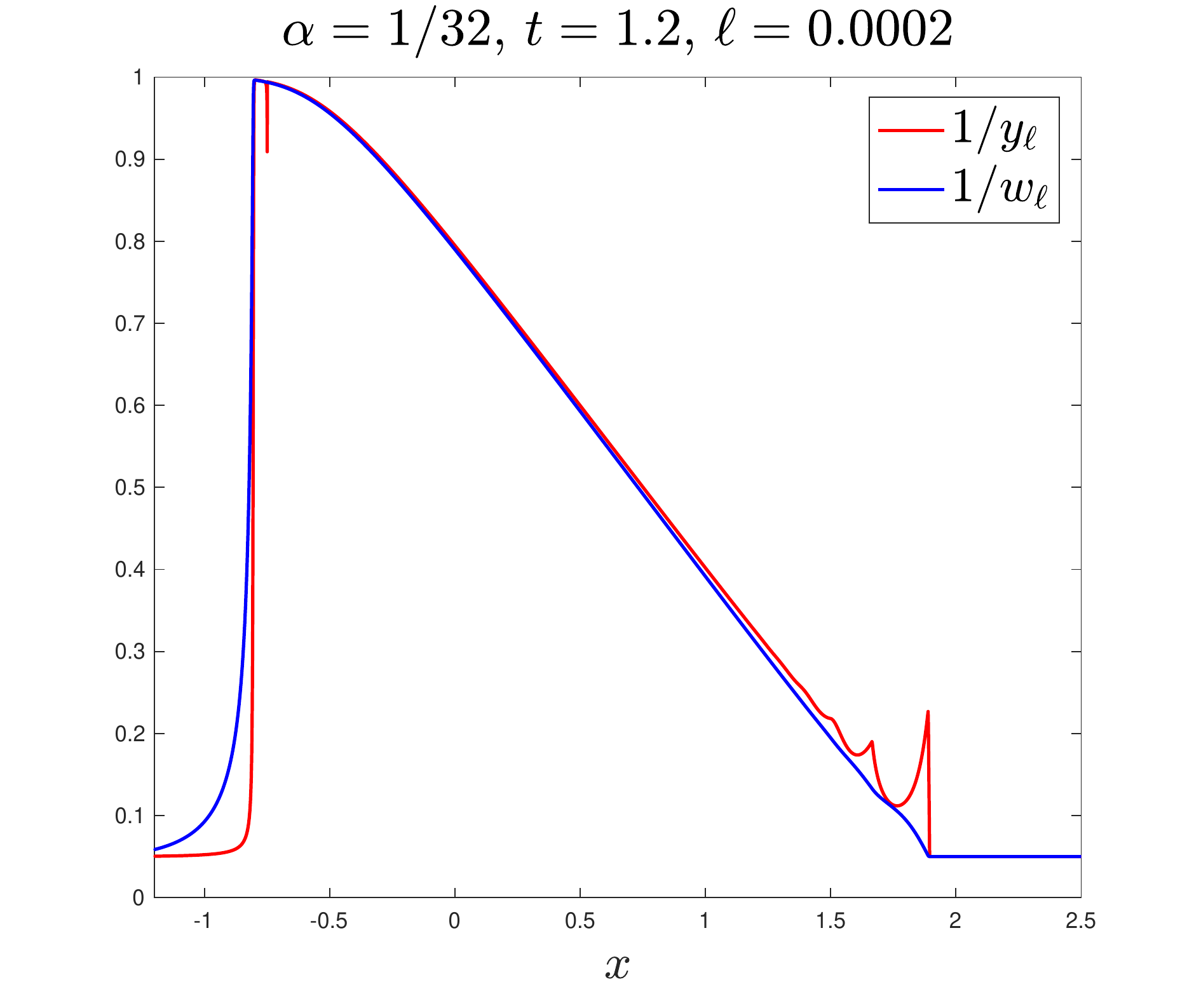}
    &\includegraphics[width=0.5\linewidth]{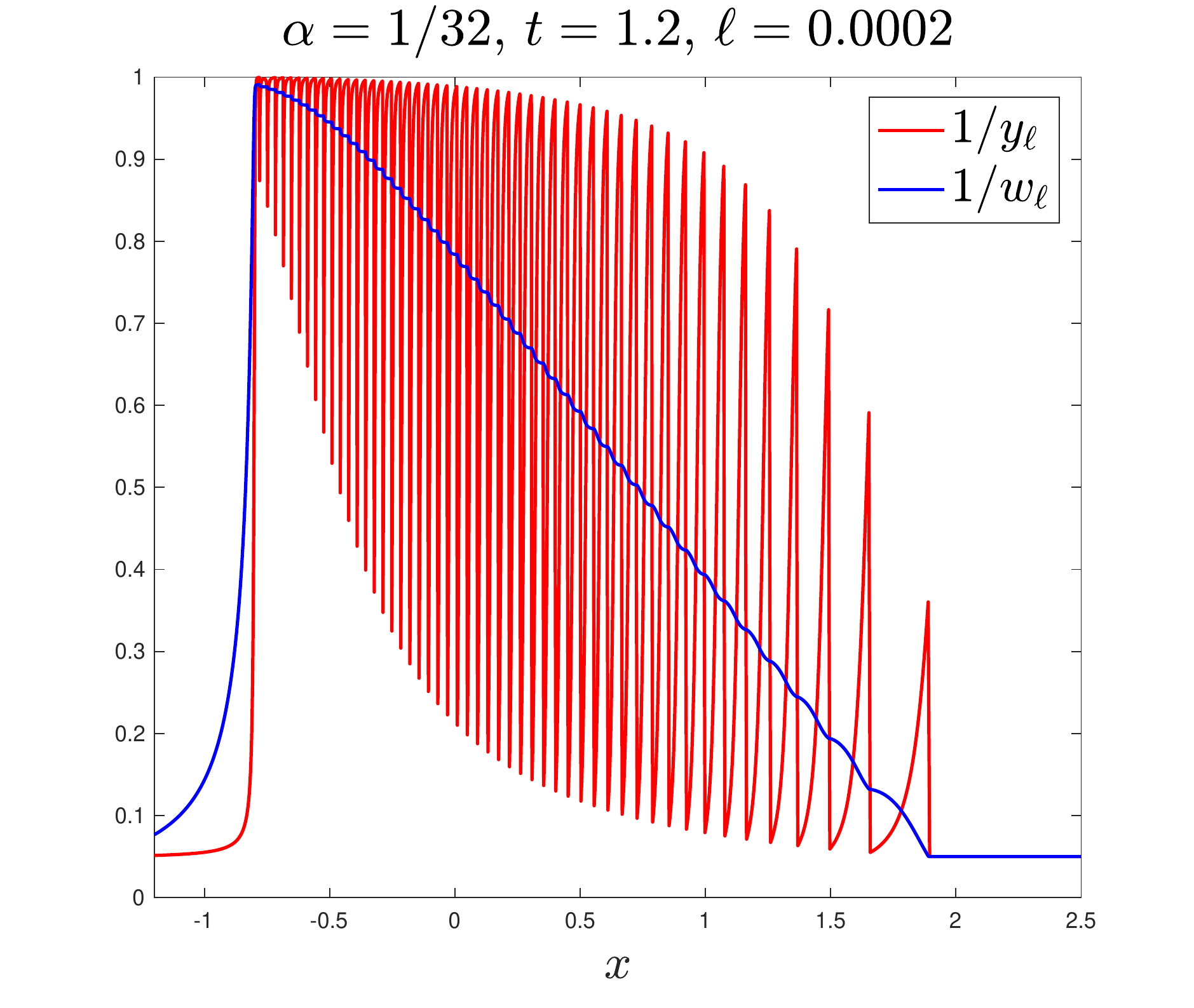}\\
    \includegraphics[width=0.5\linewidth]{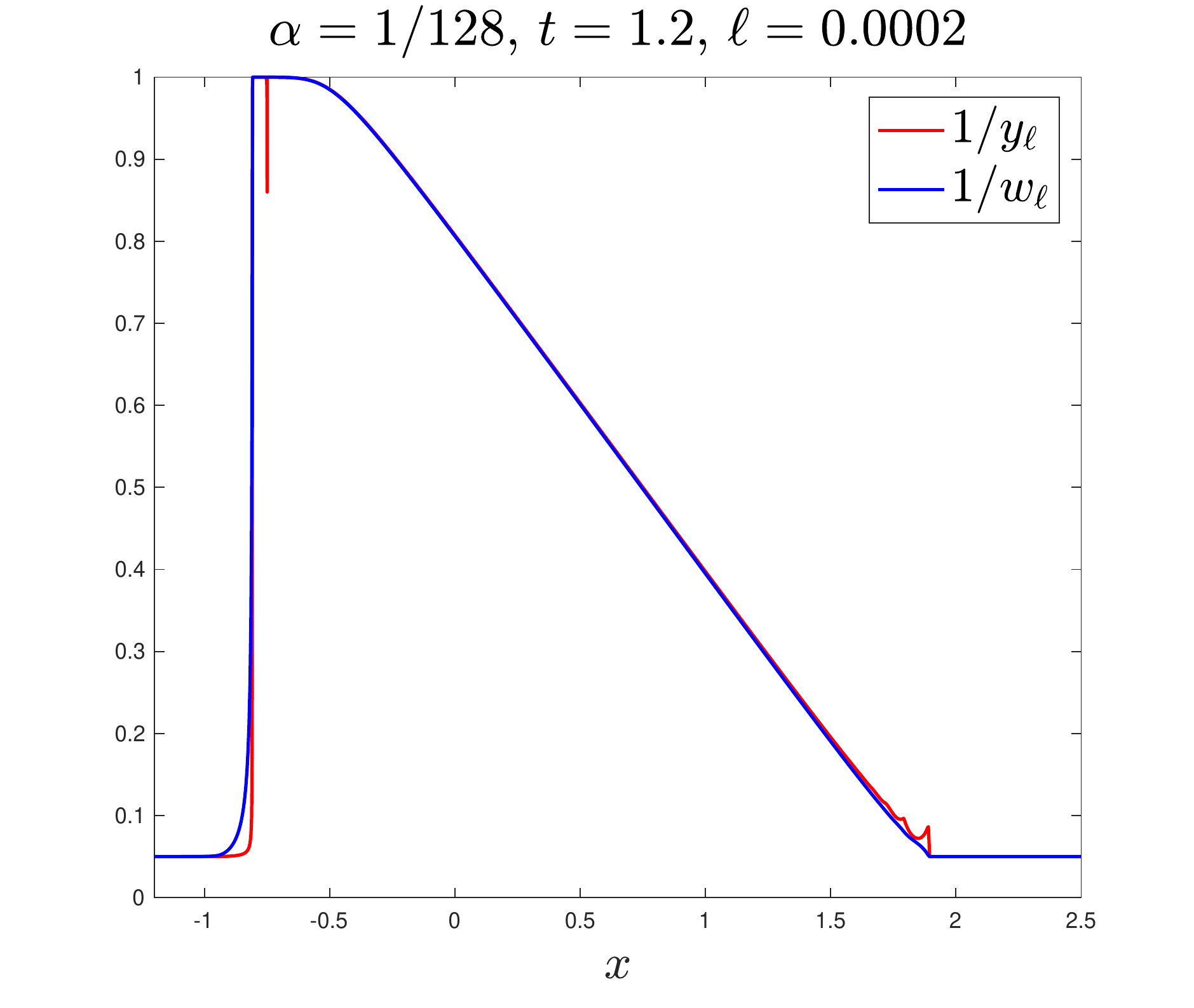}
    &\includegraphics[width=0.5\linewidth]{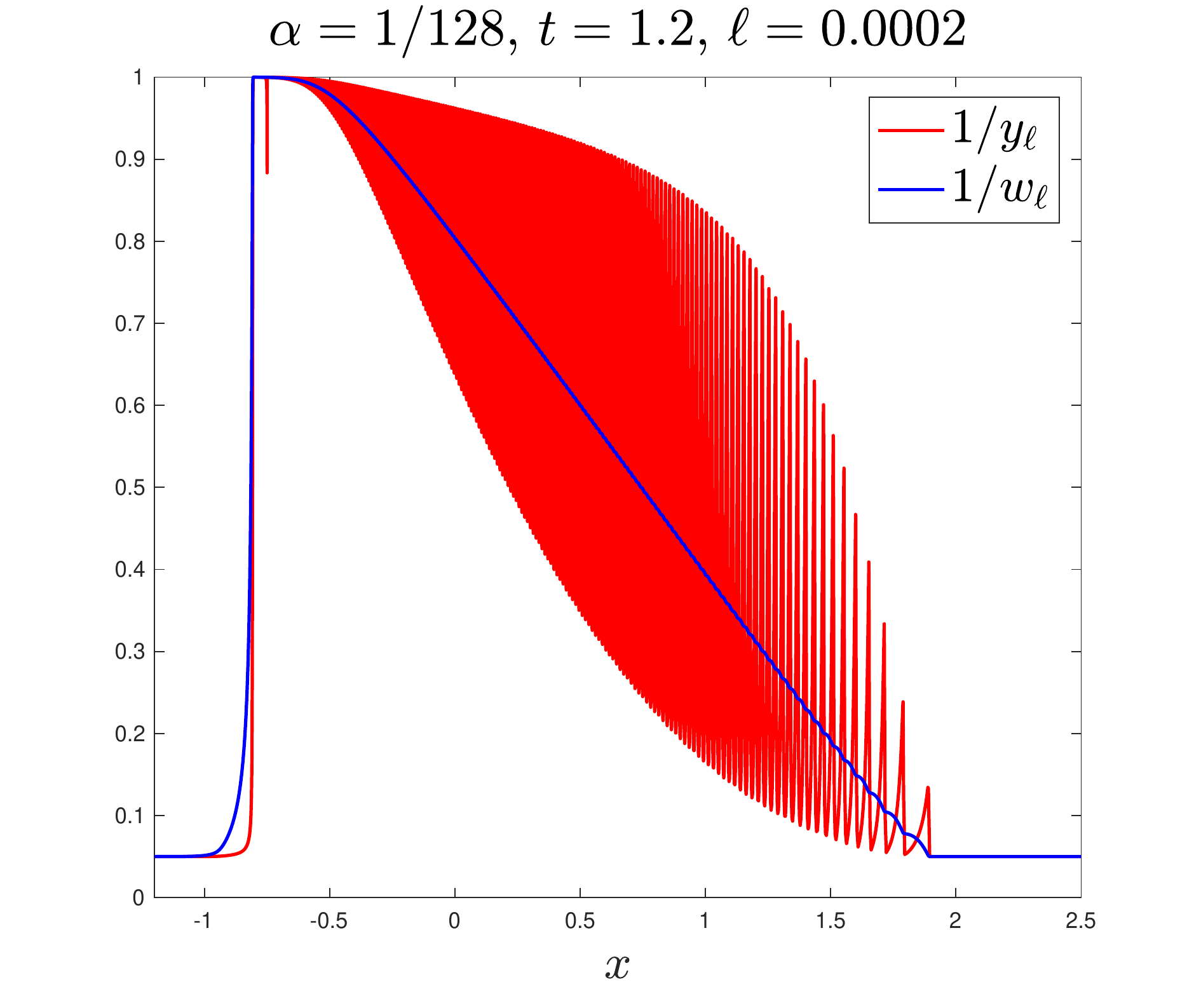}    
  \end{tabular}
  \vspace{-1.5em}
  \caption{Solutions of \eqref{eq:wschemep}, \eqref{eq:numbar},with
    initial data given in \eqref{eq:rho01}, \eqref{eq:initdata}. For
    all computations $t=1.2$ and $\ell=1/5000$. In the left column,
    $\Phi=\Phi_{\mathrm{tri}}$, in the right column, $\Phi=\Phi_{\mathrm{box}}$.}
  \label{fig:oscillations}
\end{figure}
From these computations, it is tempting to infer that (at least for
these initial data) $y_\ell$ converges strongly to $1/\rho$ for the
filter $\Phi_{\mathrm{tri}}$ and only weakly to $1/\rho$ for the
discontinuous filter $\Phi_{\mathrm{box}}$. To substantiate our
suspicion that $y_\ell$ only converges weakly, we did one final
experiment in which we used the same initial data, but $\ell=1/10000$
and $\alpha=1/256$.
\begin{figure}[h]
  \centering
  \begin{tabular}[h!]{lr}
    \includegraphics[width=0.5\linewidth]{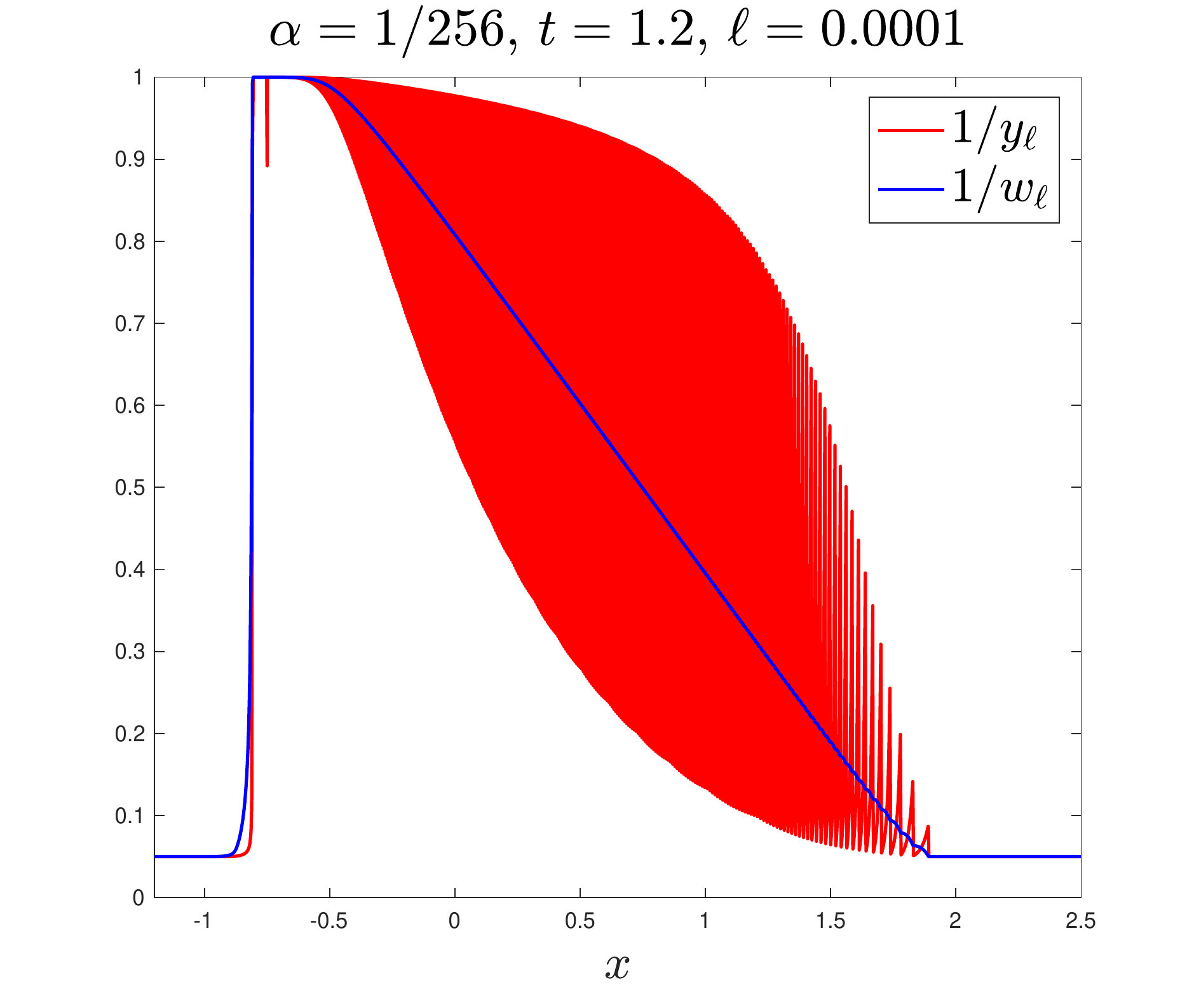}
    &\includegraphics[width=0.5\linewidth]{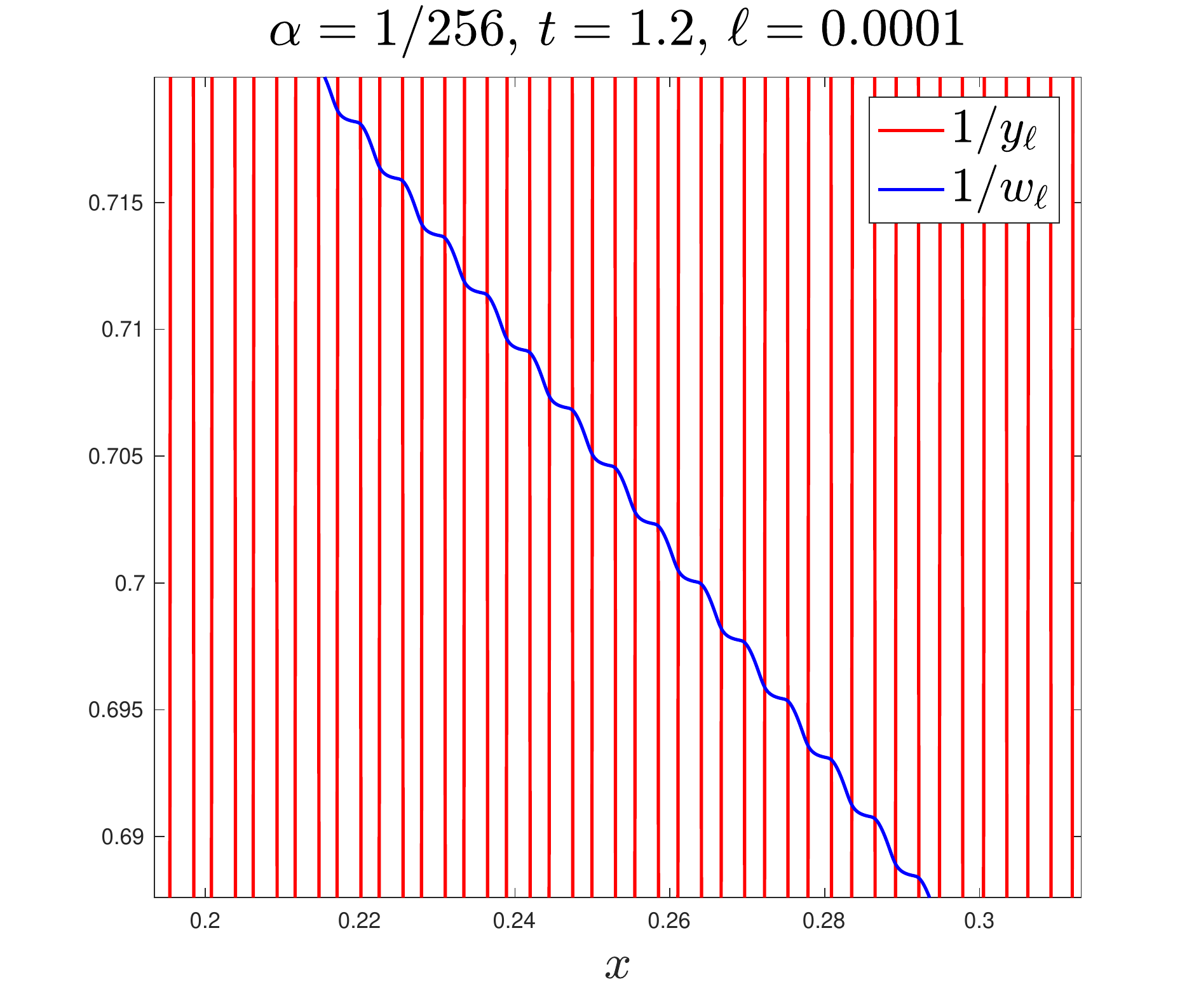}  
  \end{tabular}
  \vspace{-1.5em}
  \caption{Solutions of \eqref{eq:wschemep}, \eqref{eq:numbar},with
    initial data given in \eqref{eq:rho01}, \eqref{eq:initdata}, using
    the discontinuous filter $\Phi_{\mathrm{box}}$ with $\alpha=1/256$
    and $\ell=1/10000$. The figure to the right is just an enlargement of a
    region of the left figure.}
  \label{fig:ultrafine}
\end{figure}
The result is depicted in
Figure~\ref{fig:ultrafine}. The right figure is a magnification of the
region $x\in[0.2,0.3]$, $\rho\in [0.69,0.72]$ in the left figure.

Our experiment leads us to propose the conjecture 
that if a filter $\Phi$ is continuous, then 
the convergence of $y_\alpha$ to $1/\rho$ is strong.  
However, a proof has yet to be provided, except in the case 
of the exponential filter.



\end{document}